\documentclass[letterpaper]{article}
\usepackage{lmodern}

\usepackage[T1]{fontenc}
\usepackage[latin9]{inputenc}
\usepackage{refstyle}
\usepackage{float}
\usepackage{enumitem}
\usepackage{amsmath}
\usepackage{amssymb}
\usepackage{stmaryrd}
\PassOptionsToPackage{normalem}{ulem}
\usepackage{ulem}
\usepackage[pdfusetitle,
 bookmarks=true,bookmarksnumbered=false,bookmarksopen=false,
 breaklinks=false,pdfborder={0 0 1},backref=false,colorlinks=false]
 {hyperref}

\makeatletter

\global\long\def\N{\mathbb{N}}%
\global\long\def\Z{\mathbb{Z}}%
\global\long\def\Q{\mathbb{Q}}%

\global\long\def\Lang{\mathcal{L}}%

\global\long\def\isnotwd{\mathord{\uparrow}}%

\global\long\def\upto{\mathord{\upharpoonright}}%
\global\long\def\ZFniptc{\mathsf{ZF}\mathord{-}\mathsf{inf}\mathord{+}\mathsf{TC}}%
\global\long\def\ZFfinptc{\mathsf{ZFfin}\mathord{+}\mathsf{TC}}%

\global\long\def\llor{\bigvee\mathchoice{\hspace{-1.1em}}{\hspace{-0.7em}}{\hspace{-0.7em}}{\hspace{-0.6em}}\bigvee}%
\global\long\def\llorc{\llor^{\mathrm{c}}}%

\AtBeginDocument{\providecommand\thmref[1]{\ref{thm:#1}}}
\AtBeginDocument{\providecommand\rmkref[1]{\ref{rmk:#1}}}
\AtBeginDocument{\providecommand\defref[1]{\ref{def:#1}}}
\AtBeginDocument{\providecommand\propref[1]{\ref{prop:#1}}}
\AtBeginDocument{\providecommand\secref[1]{\ref{sec:#1}}}
\AtBeginDocument{\providecommand\enuref[1]{\ref{enu:#1}}}
\AtBeginDocument{\providecommand\lemref[1]{\ref{lem:#1}}}
\AtBeginDocument{\providecommand\corref[1]{\ref{cor:#1}}}
\AtBeginDocument{\providecommand\questionref[1]{\ref{question:#1}}}
\pdfpageheight\paperheight
\pdfpagewidth\paperwidth

\RS@ifundefined{subsecref}
  {\newref{subsec}{name = \RSsectxt}}
  {}
\RS@ifundefined{thmref}
  {\def\RSthmtxt{theorem~}\newref{thm}{name = \RSthmtxt}}
  {}
\RS@ifundefined{lemref}
  {\def\RSlemtxt{lemma~}\newref{lem}{name = \RSlemtxt}}
  {}

\usepackage[amsmath,thmmarks]{ntheorem}
\newref{thm}{refcmd={Theorem~\ref{#1}}}
\theoremstyle{plain}
\theorembodyfont{\upshape}
\theoremheaderfont{\bfseries}
\theoremseparator{.}
\theoremnumbering{arabic}
\theoremsymbol{}
\newtheorem{thm}{Theorem}[subsection]
\newref{def}{refcmd={Definition~\ref{#1}}}
		\newtheorem{definition}[thm]{Definition}
\newref{rmk}{refcmd={Remark~\ref{#1}}}
		\newtheorem{remark}[thm]{Remark}
\theoremstyle{nonumberplain}
\theoremheaderfont{\itshape}
\theorembodyfont{\upshape}
\theoremseparator{:}
\theoremsymbol{\ensuremath{\blacksquare}}
\newtheorem{proof}{Proof}

\theoremstyle{plain}
\theorembodyfont{\upshape}
\theoremheaderfont{\bfseries}
\theoremseparator{.}
\theoremnumbering{arabic}
\theoremsymbol{}
\theoremstyle{nonumberplain}
\theorembodyfont{\upshape}
\theoremheaderfont{\bfseries}
\theoremseparator{.}
\renewtheorem{thm*}{Theorem}
\theoremstyle{plain}
\theorembodyfont{\upshape}
\theoremheaderfont{\bfseries}
\theoremseparator{.}
\theoremnumbering{arabic}
\theoremsymbol{}
\theoremstyle{nonumberplain}
\theorembodyfont{\upshape}
\theoremheaderfont{\bfseries}
\theoremseparator{.}
\newtheorem{question**}{Question}
\theoremstyle{plain}
\theorembodyfont{\upshape}
\theoremheaderfont{\bfseries}
\theoremseparator{.}
\theoremnumbering{arabic}
\theoremsymbol{}
\theoremstyle{nonumberplain}
\theorembodyfont{\upshape}
\theoremheaderfont{\bfseries}
\theoremseparator{.}
		\renewtheorem{definition*}{Definition}
\theoremstyle{plain}
\theorembodyfont{\upshape}
\theoremheaderfont{\bfseries}
\theoremseparator{.}
\theoremnumbering{arabic}
\theoremsymbol{}
\newref{prop}{refcmd={Proposition~\ref{#1}}}
		\newtheorem{prop}[thm]{Proposition}
\theoremstyle{nonumberplain}
\theorembodyfont{\upshape}
\theoremheaderfont{\bfseries}
\theoremseparator{.}
		\renewtheorem{prop*}{Proposition}
\theoremstyle{plain}
\theorembodyfont{\upshape}
\theoremheaderfont{\bfseries}
\theoremseparator{.}
\theoremnumbering{arabic}
\theoremsymbol{}
\newref{lem}{refcmd={Lemma~\ref{#1}}}
		\newtheorem{lemma}[thm]{Lemma}
\newref{cor}{refcmd={Corollary~\ref{#1}}}
		\newtheorem{corollary}[thm]{Corollary}
\newref{question}{refcmd={Question~\ref{#1}}}
		\newtheorem{question}{Question}

\usepackage{listings}
\usepackage{fullpage}
\usepackage{braket}

\usepackage[cal=euler]{mathalpha}

\setlist[enumerate,2]{ref=\theenumi.\theenumii}
\setlist[enumerate,3]{ref=\theenumi.\theenumii.\theenumiii}
\setlist[enumerate,4]{ref=\theenumi.\theenumii.\theenumiii.\theenumiv}

\newref{sec}{refcmd={Section \ref{#1}}}

\usepackage[style=numeric]{biblatex}
\DeclareFieldFormat{postnote}{#1}

\makeatother

\usepackage[style=numeric]{biblatex}
\begin{document}
\title{Escaping Tennenbaum's Theorem and a Strong Jump Inversion Theorem}
\author{Duarte Maia}

\maketitle

\section{Introduction}

\subsection{Background -- Tennenbaum's Theorem}

\nocite{lutz_walsh_tennenbaum}

It is a well-known result of Stanley Tennenbaum (see e.g.\ \cite{kaye_book})
that $\mathsf{PA}$, the first-order theory of Peano Arithmetic, does
not admit any non-standard computable model. In fact, in every non-standard
model of $\mathsf{PA}$, neither addition nor multiplication are computable.
Generalizations of this theorem can be found in the literature, including
versions of it for weaker fragments of $\mathsf{PA}$ \cite{wilmers_PA},
alternative operations \cite{schmerl_tennenbaum_recursive_reducts},
and finite set theory \cite{mancini_zambella_set_theories,computable_quotient_arithmetic_set_theory},
to name a few. However, all of these approaches and variations appear
to depend heavily on the specificities of the chosen signature. This
was observed by Fedor Pakhomov, who in 2022 \cite{pakhomov} constructed
a theory that is definitionally equivalent to $\mathsf{PA}$ (the
meaning of which will be described shortly, but roughly means ``it's
$\mathsf{PA}$ but with a different choice of signature'') for which
there is a computable nonstandard model. This shows that Tennenbaum's
theorem really is reliant on the choice of signature.

Let us elaborate on the way in which this theory is ``$\mathsf{PA}$
with a different signature'', by means of an example. In the literature,
the choice of signature used to define $\mathsf{PA}$ is not set in
stone. Some authors choose to include a symbol for successor, while
others choose to omit it. This is not a problem, as long as a symbol
for $1$ and a symbol for $+$ remain in the signature, because the
successor of $x$ can always be rewritten as $x+1$. In a similar
vein, some authors choose to omit a symbol for $1$, opting instead
to write it as the successor of zero. Thus, we have two distinct axiomatizations
of $\mathsf{PA}$, in two signatures neither of which contains the
other, but which are intuitively seen to both be able to define the
missing symbols from their counterpart. This idea of defining new
symbols in terms of previous ones underlies the notion of \emph{definitional
extension}, and the way in which ``$\mathsf{PA}$ without successor''
and ``$\mathsf{PA}$ without $1$'' are `kind of the same theory'
corresponds to the notion of \emph{definitional equivalence}.
\begin{definition}[Definitional Extension]
Let $T$ be a theory over a signature $\Lang$. A \emph{definitional
extension} of $T$ is a theory $S\supseteq T$, over a language $\Lang'\supseteq\Lang$,
such that
\begin{itemize}
\item Every theorem of $S$ that only uses symbols from the language $\Lang$
is also a theorem of $T$, and
\item Every symbol of $\Lang'\setminus\Lang$ is $S$-definable in terms
of symbols in $\Lang$. For example, if $P$ is any propositional
symbol in $\Lang'$, there is an $\Lang$-formula $\varphi$ such
that
\[
S\vdash\forall_{\vec{x}}(P(\vec{x})\leftrightarrow\varphi(\vec{x})),
\]
and a similar statement holds for function symbols.
\end{itemize}
\end{definition}
\begin{definition}[Definitional Equivalence]
Let $T_{1}$ and $T_{2}$ be two theories, whose signatures $\Lang_{1}$
and $\Lang_{2}$ respectively are assumed to be disjoint with no loss
of generality. We say that $T_{1}$ and $T_{2}$ are \emph{definitionally
equivalent} if there is a theory $T$ which is a definitional extension
of both.
\end{definition}
The notion of definitional equivalence is related, but not identical,
to a more common notion of ``equal power of theories'' called bi-interpretability:
definitional equivalence is strictly stronger, though these two notions
coincide for most natural examples. The interested reader will find
a thorough comparison of these two notions in \cite{biinterpretabilityvssynonymy}.

A nontrivial example of two definitionally equivalent theories is
that of $\mathsf{PA}$ and a specific version of finite set theory,
which we will refer to as $\ZFfinptc$. This theory, and a closely
related one, shall play an important role in the sequel, so we formally
introduce them to the reader.
\begin{definition}
The theory $\ZFniptc$ consists of the usual axioms of Zermelo-Fraenkel
set theory, with the removal of the axiom of infinity, and with the
addition of the so-called ``axiom of transitive closure'', which
states that any set is contained in a transitive set.
\end{definition}
\begin{remark}
\label{rmk:vonneumann}Under the remaining axioms of $\mathsf{ZF}$
(minus infinity), the axiom of transitive closure is equivalent to
the validity of $\in$-induction. It is also equivalent (under the
same circumstances) to the claim that every set is in some level of
the von~Neumann hierarchy $V_{\alpha}$. This is, in fact, the principal
way in which this axiom will be necessary for our work.
\end{remark}
\begin{definition}
The theory $\ZFfinptc$ consists of $\ZFniptc$, together with the
negation of the axiom of infinity.
\end{definition}
\begin{remark}
The usual theory of $\mathsf{ZF}$ consists of $\ZFniptc$, together
with the axiom of infinity -- the axiom of transitive closure adds
no deductive power in this case. In other words, the axiom of transitive
closure can be proven in $\mathsf{ZF}$. This is because the transitive
closure of a set can be obtained by iterating the operation $x\mapsto x\cup\bigcup x$
countably many times, but to define this countable iteration requires
recourse to $\omega$. This recourse cannot be avoided, and indeed
it turns out that $\mathsf{ZF}-\mathsf{inf}+\neg\mathsf{TC}$ is consistent;
see \cite{mancini_zambella_set_theories} or \cite{kaye_wong} for
a proof.
\end{remark}
\begin{thm}
\label{thm:pazffineqv}The theories $\mathsf{PA}$ and $\ZFfinptc$
are definitionally equivalent.
\end{thm}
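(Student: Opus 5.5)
The natural route is through the Ackermann coding. In $\mathsf{PA}$, define $x\,E\,y$ to mean ``the $x$-th binary digit of $y$ is $1$'', i.e.\ $\lfloor y/2^{x}\rfloor$ is odd. This uses exponentiation, which is $\mathsf{PA}$-definable via the usual $\beta$-function coding of finite sequences. In the other direction, inside $\ZFfinptc$ we define $\mathbb{N}$ as the class of von~Neumann ordinals. Using the negation of infinity, every ordinal is a natural number: it is either $0$ or a successor, and the same holds for all its elements. On this class we define $0$, successor, $+$ and $\cdot$ by recursion, which is legitimate because recursion along ordinals needs only replacement and finiteness. We also define the Ackermann map $A(x)=\sum_{y\in x}2^{A(y)}$ by $\in$-recursion. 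This recursion is available because, by \rmkref{vonneumann}, $\mathsf{TC}$ yields $\in$-induction, and transitive closures let us build the approximating functions as sets. The candidate common extension $T$ is then a theory in the disjoint union of both signatures, say with the set-theoretic signature using a separate membership symbol. Its axioms are $\mathsf{PA}$ together with the defining axiom $u\in v\leftrightarrow u\,E\,v$. We must then check that $T$ is also a definitional extension of $\ZFfinptc$, with the arithmetic symbols defined by transporting the ordinal arithmetic through $A$.

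The key steps are as follows.

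\textbf{Step 1.} Show $\mathsf{PA}\vdash$ every axiom of $\ZFfinptc$ under the translation $\in\mapsto E$. Extensionality follows from uniqueness of binary expansions. Pairing and union are given by explicit numbers, for instance $2^{a}+2^{b}$. Separation follows from bounded comprehension in $\mathsf{PA}$: for any formula, the number $\sum_{x<y,\ x E y,\ \varphi(x)}2^{x}$ exists, which is proved by induction on the bound. Power set uses the fact that every subset of $y$ is coded by a number $\le y$, so we collect these. Replacement follows from collection in $\mathsf{PA}$, i.e.\ strong $\Sigma_n$ bounding, plus separation. Foundation and $\in$-induction hold because $x\,E\,y$ implies $x<y$, so they reduce to ordinary induction. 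Transitive closure follows because $\{z : z\le y\}$, coded by $2^{y+1}-1$, is transitive and contains $y$. The negation of infinity holds because any inductive set would have to contain arbitrarily large numbers, while every element of $y$ is $<y$.

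\textbf{Step 2.} Show that $\ZFfinptc$ proves that $A$ is a bijection from $V$ onto $\omega$ (the class of ordinals) that carries $\in$ to $E$ computed with the defined arithmetic. Injectivity comes from uniqueness of binary representation, proved by $\in$-induction. Surjectivity is proved by induction on naturals: given $n$, let $k$ be its top binary digit and write $n=2^{k}+m$ with $m<2^{k}$. Then $k$ and $m$ have preimages, and $A^{-1}(n)=A^{-1}(m)\cup\{A^{-1}(k)\}$; one checks that $A^{-1}(k)\notin A^{-1}(m)$. Once this bijection is in hand, we define the arithmetic symbols on all sets by pulling back through $A$.

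\textbf{Step 3.} Verify the two round-trip conditions. First, $\mathsf{PA}$ proves that $n\mapsto$ (the number assigned to $n$ by the set-theoretic definition of arithmetic, computed inside $(\mathbb{N},E)$) is the identity. Second, $\ZFfinptc$ proves the dual statement. Both conditions are proved by induction in the respective theory. Together they show that the same $T$ is a definitional extension of both theories. In particular, conservativity over each theory follows because every model of either theory expands uniquely to a model of $T$.

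The main obstacle is Step 1 for replacement and power set. For replacement I would first try to derive it from the $\mathsf{PA}$ collection scheme $\forall x<a\,\exists y\,\varphi\to\exists b\,\forall x<a\,\exists y<b\,\varphi$ and then apply bounded comprehension. For power set, the thing to verify is that $\sum_{z\subseteq y}2^{z}$ exists, which is immediate because that sum is bounded by $2^{y+1}$. The other delicate point is the second round trip in Step 3: it requires the $\in$-recursive definition of $A$ inside $\ZFfinptc$ to be formalized carefully, producing partial functions on transitive sets and gluing them. This is exactly where $\mathsf{TC}$ is essential.
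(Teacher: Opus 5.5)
Your proposal is correct and follows essentially the route the paper relies on. The paper gives no argument of its own and defers to Kaye--Wong \cite{kaye_wong}, whose proof is this same Ackermann interpretation of $\ZFfinptc$ in $\mathsf{PA}$ together with the inverse interpretation through the finite ordinals. In that argument, $\mathsf{TC}$ is what makes the $\in$-recursive map $A$ available, so that the arithmetic can be pulled back to all sets and both round trips become literal identities, as your Steps 2 and 3 require.
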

\begin{proof}
See \cite{kaye_wong}.
\end{proof}
Besides serving as an interesting example of definitional equivalence,
\thmref{pazffineqv} also underlies the entirety of Pakhomov's construction,
as it is more convenient to work with a set-theoretical framework
than it would be to work directly with $\mathsf{PA}$. It also gives
us a little bit more flexibility, allowing us to construct computable
models of theories definitionally equivalent to $\mathsf{ZF}$ or
$\mathsf{ZFC}$, though we will not explore this avenue in this document.
We now briefly present the idea behind Pakhomov's construction, before
elaborating on our original contributions.

\subsection{Pakhomov's Construction and New Results}

\global\long\def\Tboth{T_{\mathrm{both}}}%
In Fedor Pakhomov's paper ``How to Escape Tennenbaum's Theorem''
\cite{pakhomov}, the author defines a theory $T_{S}$, definitionally
equivalent to $\ZFniptc$, which axiomatizes a certain ternary predicate
$S(x,y,z)$. This ternary predicate acts as ``set membership with
witnesses'', and is initially defined in $\ZFniptc$ by transfinite
recursion in the von~Neumann hierarchy (see also \rmkref{vonneumann})
in such a way as to satisfy the rule
\begin{equation}
\ZFniptc\vdash\forall_{x,y}(x\in y\leftrightarrow\forall_{z}S(x,y,z)).\label{eq:Sdef}
\end{equation}
Since $S$ is initially defined in the context of set theory, it is
definable in terms of set membership, and Equation (\ref{eq:Sdef})
ensures that set membership is itself definable in terms of $S$,
which is why these two theories are definitionally equivalent.
\begin{definition}
\label{def:tboth}Henceforth, $\Tboth$ denotes the set of all sentences
provable from $\ZFniptc$ plus the definition of $S$ in the signature
containing both $S$ and set inclusion. Moreover, $T_{S}$ denotes
the theory of $S$, that is, $\Tboth$ restricted to the sentences
containing only the predicate $S$. 
\end{definition}
The main property of this predicate $S$ is that it is endowed with
a certain `flexibility' or `genericity'. As we will later make
precise, if we are attempting to build a model of $T_{S}$ and make
some mistakes along the way, any extra `trash' elements we might
have added on accident can still be reused as `real' elements added
later on. Pakhomov exploits this flexibility to prove:
\begin{thm}
\label{thm:premain}If $T$ is a consistent c.e.\ theory extending
$T_{S}$, then $T$ admits a computable model.
\end{thm}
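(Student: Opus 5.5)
The plan is to build the model by a computable Henkin-style construction that never needs to retract a commitment, using the flexibility of $S$ to absorb mistakes. Fix a computable enumeration of $T$ and of all sentences in the language of $S$ with constants $c_0,c_1,\dots$, which will name the domain $\N$. We build, stage by stage, a computable sequence of finite \emph{atomic diagrams}: at stage $s$ we commit to the truth values of $S(c_i,c_j,c_k)$ for all $i,j,k<s$. The resulting structure on $\N$ has computable $S$ because every atomic fact is decided at a finite stage and never revised. The difficulty is that deciding which complete theory to satisfy requires non-computable information (a completion of $T$), so the construction has to work with a guessed completion that changes over time. The point of $S$ is that its atomic diagram can be decided before the guessing is finished.

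The first and main step is an extension lemma capturing the genericity of $S$, which I will prove inside $\Tboth$. It says: given any finite partial atomic $S$-diagram $D$ on elements $a_1,\dots,a_n$ of a model $M\models\Tboth$, and any finite set of ``trash'' elements whose $S$-relations have already been committed arbitrarily, these committed facts can be realized by elements of some model of the current guessed extension of $T$. Equivalently, any finite consistent-looking atomic configuration can be realized above any given finite configuration. This rests on how $S$ is defined by recursion on the von Neumann rank: $x\in y$ holds iff $S(x,y,z)$ holds for all $z$. So when $x\notin y$, the set of $z$ with $\neg S(x,y,z)$ can be arranged to be essentially arbitrary, for instance the $z$ of large rank coding a witness. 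Finitely many prescribed values of $S$ on fresh or high-rank elements then impose no constraint on the $\in$-structure. I expect this to be the main obstacle. The recursion defining $S$ has to be chosen so that any finite pattern of $S$-values among new elements is consistent with any $\in$-type, and so that this fact is provable uniformly in $\ZFniptc$. Once that holds, the needed statement is a $T_S$-theorem schema: for every finite atomic diagram $D(\vec x)$ and every formula $\varphi(\vec y)$ of $T$ with $T\vdash\exists\vec y\,\varphi$, we have $T_S\vdash\exists\vec y\,\exists\vec x\,(\varphi(\vec y)\wedge D(\vec x)\wedge\ldots)$, with the trash $\vec x$ reusable as genuine elements.

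Given the lemma, run a finite-injury, $\emptyset'$-free priority construction. At each stage keep a finite guess $\sigma_s$ of a consistent extension of $T$, checked only to consistency-search depth $s$. Also keep a Henkin assignment of constants to witnesses, and extend the committed atomic diagram using the lemma so that it remains consistent with $\sigma_s$ as far as we can check at stage $s$. If later we discover that $\sigma_s$ is inconsistent, we switch to a new guess. The elements already committed are not discarded. The lemma guarantees that they can be reinterpreted, as trash, as elements of a model of the new guess with their atomic diagram intact. Each sentence in the $\Sigma_1$-enumeration of inconsistency has its guess change only finitely often, so in the limit the guesses converge along the leftmost consistent path to a complete consistent $T^*\supseteq T$, which is $\emptyset'$-computable. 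Likewise each Henkin requirement is eventually satisfied permanently.

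Finally, verify that the limit structure, with domain $\N$ and the committed $S$, is the Henkin term model of $T^*$. Each atomic fact is fixed once and agrees with $T^*$ because $T^*$ was made consistent with all committed facts. Each existential claim of $T^*$ has a witness constant. By the usual truth lemma the structure satisfies $T^*\supseteq T$, and $S$ on it is computable by construction.
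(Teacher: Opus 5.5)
Your route differs from the paper's. The paper runs no Henkin construction with guessed completions. It takes a $0'$-computable model of the consistent c.e.\ theory $T\cup T_{\mathrm{both}}$ (relativized completeness theorem) and applies Theorem~\ref{thm:basic_pakhomov_jump_inversion}, i.e.\ Theorem~\ref{thm:main_final} for a structure with the QETP. A direct Henkin argument might be made to work, but as written yours has a genuine gap: the extension lemma that drives your injury analysis is false, and its failure is exactly what the construction must handle.

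You claim that prescribed $S$-values on fresh elements impose no constraint on the $\in$-structure. You also claim that after a change of guess, committed elements can be reinterpreted as trash with their atomic diagram intact. But $x\in y\rightarrow\forall_{z}S(x,y,z)$ holds, so a committed fact $\neg S(a,b,t)$ forces $a\notin b$ however $t$ is reinterpreted. The correct statement, Lemma~\ref{lem:sflex}, realizes a prescription for a new $w$ over $A$ only if it contains $S(a,b,w)$ whenever $a,b\in A$ and $a\in b$. The bare consistency of $T\cup\delta$ is never at risk: any finite atomic $S$-diagram of distinct elements is realized in every model of $T_S$ by elements $\in$-unrelated to everything. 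What is at risk are the non-atomic sentences and Henkin witnesses in your guess.

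Here is a concrete failure. Suppose a Henkin requirement has chosen $c_k$ with $\psi(\vec c,c_k)$ in the guess, where $T\vdash\psi(\vec c,c_k)\rightarrow\forall_{z}S(c_k,c_1,z)$ only by a long proof. A lower-priority requirement whose bounded check has not yet found this proof adds $c_m$ with $\neg S(c_k,c_1,c_m)$. From then on $T\cup\delta\vdash\neg\psi(\vec c,c_k)$. A lower-priority action has injured a higher-priority requirement, and no reinterpretation of $c_m$ repairs it. Your convergence argument (each guess changes finitely often, each Henkin requirement is eventually satisfied permanently) never bounds this kind of injury. Nothing in your sketch rules out it recurring for each new witness, so nothing guarantees the Henkin property in the limit, or that the term model satisfies $T$.

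The missing idea is the positive quantifier elimination of Remark~\ref{rmk:sjipakhomov1}. For an $S$-type $q(\vec x,\vec y)$, $\exists_{\vec y}q(\vec x,\vec y)$ is equivalent to $q\upharpoonright\vec x$ together with the positive facts $x_i\notin x_j$, one for each $\neg S(x_i,x_j,y)$ in $q$ with $y$ among $\vec y$. These forced $\notin$ facts are the only obstruction to absorbing trash. The paper exploits this as follows:
\begin{itemize}
\item Every new commitment is checked against a model $D$ fixed in advance, so the target does not depend on the diagram being built.
\item The check is the condition $D\vDash\chi_Q(\vec b,d)$, which is c.e.\ precisely because $\notin$ occurs only positively.
\item False positives can come only from fake elements of $D$; their matches are broken when they disappear.
\item Orphaned elements are rematched to elements satisfying $\tau$, i.e.\ $\in$-unrelated to everything.
\end{itemize}
To keep your single-layer construction you would need to state and prove this corrected lemma and restrict your commitments accordingly. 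You would then need an actual argument that each requirement suffers only finitely many injuries from forced $\notin$ facts. That argument is the real content of the proof and is absent from your sketch.

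Two smaller points. $S$ is fixed by the statement, so you cannot choose its recursion. Your schema with $T_S\vdash\ldots$ should have $T\vdash$, and its ``$\ldots$'' hides exactly the cross-relations between trash and real elements where the constraint lives.
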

Pakhomov proves this using an explicit ``Henkin construction'' type
of argument. We reframe his work in terms of the following stronger
result:
\begin{thm*}[\ref{thm:basic_pakhomov_jump_inversion}]
If $D$ is a $0'$-computable model of $\Tboth$, the reduct $D\upto S$
admits a computable copy.
\end{thm*}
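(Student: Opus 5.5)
We fix a $0'$-computable model $D$ of $\Tboth$ and use the limit lemma: there is a computable sequence of finite approximations $D_s$ to the atomic diagram of $D$ (with respect to $\in$ and $S$) converging pointwise to $D$. The aim is a computable structure $C$ in the signature $\{S\}$ together with a $0'$-computable isomorphism $C\cong D\upto S$. This is a finite-injury priority, or ``guess and repair'', construction in the style of Pakhomov's Henkin argument, but driven by the approximation to $D$ rather than by a c.e.\ theory. Once the $S$-facts of $C$ are declared they are never changed. What changes is a finite partial map $f_s$ from elements of $C$ to elements of $D$ (as currently guessed). When a guess about $D$ turns out wrong, the affected elements of $C$ are detached from their images and become ``trash''.

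The key ingredient is a genericity or flexibility lemma for $S$, proved inside $\ZFniptc$ from the recursive definition of $S$ on the von~Neumann hierarchy (\rmkref{vonneumann}). Roughly: let $a$ be a finite tuple of elements of $D$, and let $p$ be a finite, consistent set of $S$-atomic facts and negated facts about $a$ together with finitely many fresh variables $b$. Suppose these facts do not conflict with the true facts, and in particular that no negated $S(x,y,z)$ is imposed where Equation (\ref{eq:Sdef}) forbids it. Then there exist elements of $D$ realizing $b$ that satisfy $p$. More importantly, any finite configuration of trash can be re-embedded: the trash elements can always be assigned real images in $D$ consistent with all $S$-facts already committed. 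This is the sense in which mistakes can be absorbed. I would isolate it as a lemma, stating it as an extension property: $D\upto S$ is ``finitely injective-extendable'' over every finite partial $S$-structure that is compatible in this sense. Its proof will use that $S(x,y,z)$ for witnesses $z$ of high rank is essentially unconstrained, so fresh sets of large rank can serve as images.

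With this in hand the construction proceeds as follows. At stage $s$ we have a finite $S$-structure $C_s$ and a partial map $f_s\colon C_s\to D$ that is a partial isomorphism according to $D_s$. We check whether $f_{s}$ is still a partial isomorphism according to $D_{s+1}$.
\begin{itemize}
\item If it is, we extend $C_s$ by the next requirement. The requirements alternate between adding the least unmapped element of $D$ to the range (back) and adding a new element of $C$ and deciding all $S$-facts involving it (forth). The new facts are copied from $D_{s+1}$.
\item If it is not, we find the least element whose image is wrong and undefine $f$ on it and everything added after it. The detached elements keep their committed $S$-facts, and we use the flexibility lemma (evaluated with the current approximation) to find new images for them in $D$.
\end{itemize}
Since each $D$-fact eventually stabilizes, each initial segment of $f_s$ is injured only finitely often, so $f=\lim_s f_s$ exists. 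The back-and-forth requirements then make $f$ a total bijection. Committed facts agree with $D$ in the limit because they agreed with the true diagram when the final images were chosen, so $f$ is an isomorphism $C\cong D\upto S$.

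I expect the main obstacle to be the flexibility lemma itself, in the precise form needed: re-embedding a finite trash configuration whose committed facts were copied from an \emph{incorrect} approximation. Those facts may not be realized by the originally intended elements. We need that every finite $S$-diagram which is merely ``locally consistent'' in a syntactic sense (essentially, respecting Equation (\ref{eq:Sdef}) only through positive demands) is realized over any finite set of real parameters. To handle this I would first restrict which $S$-facts the construction ever commits to, for example committing negative facts only with freshly chosen witnesses. This keeps the committed diagrams inside the class the lemma handles, and the lemma can then be proved by choosing images of sufficiently high rank and unwinding the recursive definition of $S$.
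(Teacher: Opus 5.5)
There is a genuine gap, exactly at the step you flag, and your proposed safeguard does not close it. The strong form of your flexibility lemma is false: it is not true that every finite trash configuration can be re-embedded consistently with the committed $S$-facts, nor that every ``syntactically locally consistent'' finite $S$-diagram is realized over real parameters. As a result, your construction can produce trash that is never realizable.

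Here is a concrete failure. Suppose $m_1\mapsto b_1$ and $m_2\mapsto b_2$ are matched with all their $S$-facts correct, and $D\models b_1\in b_2$. Suppose also that at some stage the approximation wrongly reports $\neg S(b_1,b_2,d)$ for the least unmapped $d$. You add $z\mapsto d$ and must commit $\neg S(m_1,m_2,z)$, since the diagram of $C$ is total. Note that $z$ is a freshly chosen witness, so restricting negative commitments to fresh witnesses forbids nothing here; facts of exactly this shape are the dangerous ones. When the approximation corrects itself, your injury rule detaches $z$ and later elements, but not $m_1,m_2$, whose images are right. (Detaching them whenever some fact linking them to a later element is wrong would instead destroy finite injury, since there are infinitely many later elements.) By (\ref{eq:Sdef}) no $w\in D$ satisfies $\neg S(b_1,b_2,w)$, so $z$ is never re-embedded and $C\not\cong D\upto S$.

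The underlying point is that realizability of the committed diagram over the matched images is not a property of the diagram alone. Let $q(\vec m,\vec t)$ be a finite $S$-type, with $\vec t$ the unmatched elements and $\vec b$ the images of $\vec m$. From \lemref{sflex} and (\ref{eq:Sdef}), $q$ is realizable over $\vec b$ if and only if $q\upto\vec m$ holds of $\vec b$ and $b_a\notin b_c$ for every committed $\neg S(m_a,m_c,t)$ with $t$ unmatched. That is a condition on the real parameters, and your $S$-only partial-isomorphism check neither verifies it nor reacts to it.

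The missing idea is to carry $\notin$ as positive, tracked data and to organize the construction around this criterion. This is what the paper does: it defers to \thmref{gen_pakhomov_jump_inversion}, i.e.\ it verifies the QETP with $\Lang=\{S,\neg S,=,\neq\}$ and $\in,\notin$ added in $\Lang'$, and then applies \thmref{main_final}.

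In that proof, $\chi_q$ is exactly the criterion above. The construction keeps the invariant that the \emph{whole} current copy, trash included, satisfies it over the matched images (\ref{enu:asmchi} in the proof of \thmref{main_final}). A new element is added only when the current approximation certifies this, including for the decomposition in which the new element itself is treated as unmatched (\rmkref{weakerchi}). A later change in one of these $\notin$-facts injures the matches involved: these facts concern only the finitely many matched elements, and a change means their $\Lang'$-type changes, so they count as fake. So in your scenario either $z$ is never added, or $m_1$ or $m_2$ is eventually detached as well.

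Trash is then re-matched only to elements that are $\in$-unrelated to every matched element and to themselves (the formula $\tau$). Such elements exist by \lemref{sflex}, and they keep everything committed afterwards realizable, via $\varepsilon\tau_{q\top Q}=\chi_Q$ together with Item \ref{enu:qetp3} of the QETP.

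Finally, your finite-injury count also needs the search for new images to be systematic, e.g.\ least candidate first, or the penalty scheme in the technical note of that proof. Stabilization of each individual $D$-fact does not by itself stop a trash element from being moved through infinitely many candidates.
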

We provide a brief sketch of how \thmref{basic_pakhomov_jump_inversion}
implies \thmref{premain}. If $T$ is a consistent c.e.\ theory extending
$T_{S}$, consider the consistent c.e.\ theory $\bar{T}=T\cup T_{\mathrm{both}}$.
By the computable completeness theorem, there is a $0'$-computable
model $M$ of $\bar{T}$, whose computable reduct to $S$ is a computable
model of $T$ by \thmref{basic_pakhomov_jump_inversion}.

We broadly generalize the methods used by Pakhomov to answer a question
posed by him in the affirmative:
\begin{question**}[Pakhomov \cite{pakhomov}]
Fix a value of $n$. Are there theories definitionally equivalent
to ``$\mathsf{PA}$ plus all true $\Pi_{n}$ sentences'' that have
computable non-standard models?
\end{question**}
The motivation for this question is that, as Pakhomov proved in \cite{pakhomov},
this statement is not true `in the limit'. More precisely, Pakhomov
showed that any nonstandard model of a theory that is definitionally
equivalent to true arithmetic cannot be computable. This leads to
the question of whether a partial result can be recovered.

We answer Pakhomov's question via the following improvement on the
construction of Pakhomov's theory $T_{S}$:
\begin{thm*}[\ref{thm:nestedpakhomov}]
There is a nested sequence of consistent c.e.\ theories $\ZFniptc\subseteq T^{0}\subseteq T^{1}\subseteq T^{2}\subseteq\cdots$
satisfying the following properties:
\begin{itemize}
\item Each $T^{n}$ is in the signature containing $\in$ and predicates
$S^{0}$, $\dots$, $S^{n}$, with each $S^{i}$ being an $(i+2)$-ary
predicate symbol,
\item All of these extensions are conservative over $\ZFniptc$, in the
sense that they contain no additional theorems in the predicate $\in$,
\item The predicates $\in$ and $S^{n}$ are definable in terms of the other
within $T^{n}$, and
\item Given a $0'$-computable model $D$ of $T^{n}$ restricted to the
language containing $S^{i}$, $\dots$, and $S^{n}$, there is a computable
copy $M$ of $D\upto(S^{i+1},\dots,S^{n})$.
\end{itemize}
The last item relativizes: Given an $X'$-computable model of $T^{n}$
implementing the last $k$ predicates, there is an $X$-computable
copy of its reduct to the last $k-1$ predicates.

\end{thm*}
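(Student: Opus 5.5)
The plan is to iterate Pakhomov's construction, building each predicate $S^{n}$ out of $S^{n-1}$ in the same way that $S$ is built out of $\in$. First I rename $\in$ as $S^{-1}$, a binary predicate, and reread Pakhomov's $S$ as $S^{0}$. Pakhomov defines it in $\ZFniptc$ by transfinite recursion along the von~Neumann hierarchy (\rmkref{vonneumann}). The defining equation is (\ref{eq:Sdef}): membership holds exactly when $S^{0}(x,y,z)$ holds for every witness $z$.

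Inductively, given $S^{n-1}$ of arity $n+1$ and definable in $\ZFniptc$, I define $S^{n}$ of arity $n+2$ by a recursion over the von~Neumann rank of the tuple. The template is Pakhomov's recursion with $S^{n-1}$ in place of $\in$, arranged so that
\[
\ZFniptc\vdash\forall_{\vec{x}}\bigl(S^{n-1}(\vec{x})\leftrightarrow\forall_{z}S^{n}(\vec{x},z)\bigr).
\]
The theory $T^{n}$ is then $\ZFniptc$ together with the defining recursions for $S^{0},\dots,S^{n}$. This immediately gives the following properties.
\begin{itemize}
\item The theories are nested, consistent and c.e.
\item They are conservative over $\ZFniptc$, since each new symbol is introduced by an explicit definition.
\item $\in$ is definable from $S^{n}$ by unwinding the displayed equivalence $n+1$ times, giving $x\in y\leftrightarrow\forall_{z_{0}}\cdots\forall_{z_{n}}S^{n}(x,y,z_{0},\dots,z_{n})$.
\item $S^{n}$ is definable from $\in$ by construction.
\end{itemize}

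The substantive part is the last item, the jump inversion. I would prove a relativized, abstracted version of \thmref{basic_pakhomov_jump_inversion}. The claim is: if $D$ is an $X'$-computable structure interpreting $S^{i},\dots,S^{n}$ and satisfying the relevant fragment of $T^{n}$, then $D\upto(S^{i+1},\dots,S^{n})$ has an $X$-computable copy. The key step is to isolate exactly which properties of the pair $(\in,S)$ Pakhomov's construction uses. These are the "genericity" extension axioms satisfied by $S$ relative to $\in$, which let trash elements be reused. I then check that the same properties hold for each pair $(S^{i},S^{i+1})$, and that they are expressible using the predicates $S^{i+1},\dots,S^{n}$ alone. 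For this I would prove the recursion for $S^{n}$ so that its genericity axioms speak about $S^{n-1}$ only through $\forall_{z}S^{n}$.

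The construction is a finite-injury, Henkin-style argument. Computably in $X$, I approximate $D$ through $X$-computable guesses of its $X'$-oracle. Facts about $S^{i+1}$ are committed only in the "positive" direction: at stage $s$ I decide finitely many atomic facts about $S^{i+1},\dots,S^{n}$. When a guess turns out wrong, genericity lets me absorb the already committed elements as legitimate new elements of the eventual structure. At the same time, the truth of $S^{i}(\vec{x})$ is only reflected in the limit, as $\forall_{z}S^{i+1}(\vec{x},z)$. This is precisely why we lose one jump and one predicate.

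The higher predicates $S^{i+2},\dots,S^{n}$ must be carried along consistently. I would handle this by making the genericity property uniform, so that an extension of a finite partial $S^{i+1}$-configuration extends compatibly to all the higher predicates. Relativization is automatic because every step is uniform in the oracle.

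The main obstacle I expect is the design of the recursion for $S^{n}$. It has to satisfy two requirements at once. First, the displayed universal-witness equivalence with $S^{n-1}$. Second, a strong enough extension property involving all the higher levels simultaneously, so that the priority argument at level $i$ never has to revise committed facts about $S^{i+2},\dots,S^{n}$.

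My first attempt would use the same set-theoretic coding Pakhomov uses at each level, treating tuples as sets. I would then verify the extension property by an $\in$-induction argument in $\ZFniptc$, using transitive closure to range over all levels $V_{\alpha}$.
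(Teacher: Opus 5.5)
\paragraph{The gap: the equivalence without negation.} Your proposal fails at the one design choice you make explicitly: the equivalence $S^{n-1}(\vec x)\leftrightarrow\forall_{z}S^{n}(\vec x,z)$, with no negation. As you note, it gives $x\in y\leftrightarrow\forall_{z_0}\cdots\forall_{z_n}S^{n}(x,y,z_0,\dots,z_n)$. Because consecutive universal quantifiers collapse, this is a $\Pi_1$ definition of $\in$ from $S^n$ for every $n$.

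With that, the fourth bullet cannot hold at every level, whatever recursion you pick. Run the argument of \thmref{pakhomov_answer} with your theories, applied to $\mathsf{PA}$ plus all true $\Pi_k$ sentences plus a nonstandard constant. It yields a nonstandard model of true $\Pi_k$ arithmetic with a presentation in which $S^k$ is computable, or at worst $0'$-computable. Then $\in$, and hence $+$ through the fixed Kaye--Wong translation, has arithmetical complexity bounded independently of $k$. Relativized Tennenbaum then puts the whole standard system below that bound. But the standard system of a model of true $\Pi_k$ arithmetic contains $0^{(k-2)}$, which is a contradiction for large $k$.

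The same defect shows up locally, in the genericity you want to exploit. With your polarity, the unconstrained value of $S^{n-1}$ on a tuple containing a fresh element is ``false''. Yet $S^{n-1}(\vec a,w)$ is forced true whenever $S^{n-2}(\vec a)$ holds, and so on down to $\in$. Concretely, $\exists_{w}\exists_{u}\neg S^{i+1}(\vec a,w,u)\leftrightarrow\neg S^{i-1}(\vec a)$. So eliminating existential quantifiers over $(S^{i+1},\dots,S^n)$ needs $\neg S^{i-1},\dots,\notin$. These are not quantifier-free in $(S^i,\dots,S^n)$; in a $0'$-computable copy they are only $\Sigma_1$ relative to $0'$. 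Your Henkin-style sketch, ``committing only positive facts'', cannot get around this: $(D,S^i,\dots,S^n)$ simply does not carry the jump of $(D,S^{i+1},\dots,S^n)$.

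\paragraph{What the paper does instead.} The missing idea is the negation the paper introduces. It sets $S^0(x,y)\leftrightarrow x\notin y$, which is binary and matches the required arity $i+2$; your indexing, with binary $\in$ as $S^{-1}$ and ternary $S^0$, is off by one. It then requires $\neg S^{n}(\vec x)\leftrightarrow\forall_{y}S^{n+1}(\vec x,y)$ (\lemref{sneg}). Now the quantifiers alternate, so $\in$ is $\Pi_n$ in $S^n$, and ``true'' is the unconstrained value at every level.

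The simultaneous multi-level extension property, which you flag as the main obstacle, is \lemref{sflex_general}. The paper obtains it by nesting $w^{n+1}(\beta,mA,A,S^0A,\dots,S^{n+1}A)=w^{n}(\beta,(S^{n+1}A,mA),A,S^0A,\dots,S^nA)$. One fresh set then realizes prescribed $S^0,\dots,S^{n+1}$ relations at once, subject only to two conditions: all $S^0$-relations involving the new element are true, and whenever $\neg S^j$ is prescribed, $S^{j+1}$ is prescribed. With this, $\exists_{y}q(\vec x,y)$ reduces to $q\upto\vec x$ plus positive $S^i$-facts.

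The jump inversion then needs no new priority argument. One checks the QETP for $\Lang=(S^{i+1},\dots,S^n)$ and $\Lang'=(S^i,\dots,S^n)$, with negations. Take $\tau_{q\top}$ to demand $\chi_q$ together with all $S^i$-relations involving $y$, and take $\etau_{q\top Q}\equiv\chi_Q$. Then \thmref{main_final} applies.
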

\begin{remark}
The second and third bullet point imply that, for every $n$, $\ZFniptc$
and $T^{n}\upto S^{n}$ are definitionally equivalent.
\end{remark}
From \thmref{nestedpakhomov}, we conclude one of the main results
of this paper:
\begin{thm*}[\ref{thm:pakhomov_answer}]
For every $n$, there is a theory definitionally equivalent to ``$\mathsf{PA}$
plus all true $\Pi_{n}$ sentences'' that admits a computable non-standard
model.
\end{thm*}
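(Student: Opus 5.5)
We derive the statement from \thmref{nestedpakhomov} together with \thmref{pazffineqv}, by pulling the construction back from set theory to arithmetic and then peeling off one jump at a time. Fix $n$ and let $\mathsf{TA}_{n}$ denote $\mathsf{PA}$ plus all true $\Pi_{n}$ sentences. By \thmref{pazffineqv}, $\mathsf{PA}$ and $\ZFfinptc$ are definitionally equivalent; the translation sends $\mathsf{TA}_{n}$ to $\ZFfinptc$ plus the translates of true $\Pi_{n}$ sentences. Since the translation of $\in$ into arithmetic (Ackermann coding) is $\Delta_{0}$, these translates are $\Pi_{n}$ over $\ZFfinptc$ up to provable equivalence. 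Call the resulting set theory $U_{n}$. It is $0^{(n)}$-computable, it is consistent because it holds in $V_{\omega}$, and it extends $\ZFniptc$.

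Next, form $\bar{T}=T^{n}\cup U_{n}$, where $T^{n}$ is the theory from \thmref{nestedpakhomov}. Since $T^{n}$ is conservative over $\ZFniptc$ in $\in$-sentences, $\bar{T}$ is consistent: expand $V_{\omega}$ by the intended definitions of $S^{0},\dots,S^{n}$. We also need a non-standard model, so we add new constants $c_{k}$ with axioms ``$c_{k}$ has at least $k$ elements'' (or ``$c$ is an ordinal above the $k$-th numeral''). This keeps the theory consistent and $0^{(n)}$-computable. By the relativized computable completeness theorem there is a $0^{(n+1)}$-computable model $D$ of $\bar{T}$, and $D$ is non-standard.

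Now apply the last bullet of \thmref{nestedpakhomov} repeatedly, in its relativized form. Starting from the $0^{(n+1)}$-computable model $D$, whose reduct to $(S^{0},\dots,S^{n})$ is presented $0^{(n+1)}$-computably, each application lowers the jump by one and drops the first remaining predicate. After $n+1$ applications we obtain a computable copy $M$ of $D\upto S^{n}$. The main obstacle is the indexing: the nested theorem's reduct chain must consume exactly one jump per predicate, so that $n+1$ predicates absorb $0^{(n+1)}$. I would first check the precise form, namely that a model of the whole $T^{n}$ computable in $X^{(k)}$ yields an $X^{(k-1)}$-computable copy of the reduct dropping $S^{i}$, and that each intermediate copy is still the reduct of a model of $T^{n}$, so that the hypothesis recurs. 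This holds because each copy is isomorphic to a reduct of $D$, and the dropped predicates can be transported along the isomorphism. The copies of later reducts are then computed relative to the same oracle, which is needed for iteration. If the offsets turn out to be off by one (only $n$ jumps absorbed), I would instead use $T^{n+1}$.

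Finally, let $T^{\ast}$ be the set of consequences of $\bar{T}$ in the language of $S^{n}$, transported back to arithmetic via \thmref{pazffineqv}. By the remark following \thmref{nestedpakhomov}, $\in$ is definable from $S^{n}$ within $T^{n}$, and $T^{n}\upto S^{n}$ is definitionally equivalent to $\ZFniptc$. Composing these equivalences, $\bar{T}\upto S^{n}$ is definitionally equivalent to $U_{n}$, and hence to $\mathsf{TA}_{n}$. This requires that definitional equivalence is preserved under adding the corresponding translated axioms, which is routine. The computable model $M$ of $\bar{T}\upto S^{n}$ is non-standard because $D$ is. It is therefore the desired computable non-standard model of a theory definitionally equivalent to $\mathsf{PA}$ plus all true $\Pi_{n}$ sentences.
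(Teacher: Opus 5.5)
Your plan is the paper's plan, but the off-by-one you worried about is real. As written, the main line fails and your fallback is mandatory. The theory $T^{n}$ carries only the $n+1$ predicates $S^{0},\dots,S^{n}$. Each use of the relativized last bullet of Theorem~\ref{thm:nestedpakhomov} (i.e.\ Theorem~\ref{thm:gen_pakhomov_jump_inversion}, which requires $i<n$) deletes one predicate and absorbs one jump.

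Getting from $(S^{0},\dots,S^{n})$ down to $S^{n}$ alone therefore takes exactly $n$ applications. So your $0^{(n+1)}$-computable $D$ yields only a $0'$-computable copy of $D\upto S^{n}$. The phrase ``$n+1$ applications ending at $S^{n}$'' is self-contradictory, since $n+1$ deletions from $n+1$ predicates leave nothing. Nor can you gain the missing jump from the passage $\in\mapsto S^{0}$, because $S^{0}$ is literally $\notin$.

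The case $n=0$ shows this starkly. $T^{0}$ has only $S^{0}$, so the theorem cannot be applied even once. The conclusion would then be a computable nonstandard model of $\ZFfinptc$ in the signature $\{\notin\}$, which is exactly what the Tennenbaum-type results for finite set theory cited in the introduction rule out.

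With $T^{n+1}$ in place of $T^{n}$, your argument goes through and is exactly the paper's proof. You take a $0^{(n)}$-c.e.\ extension $\bar{T}$ of $T^{n+1}$ by the translated true $\Pi_{n}$ sentences, add a nonstandard constant, and obtain a $0^{(n+1)}$-computable model in $S^{0},\dots,S^{n+1}$. Then $n+1$ applications land on a computable copy of the reduct to $S^{n+1}$, and $\bar{T}\upto S^{n+1}$ is definitionally equivalent to $\mathsf{PA}$ plus all true $\Pi_{n}$ sentences.

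Two of your remarks are correct and supply steps the paper leaves implicit. First, each intermediate copy is again isomorphic to a reduct of a model of the full theory, so the hypothesis recurs. Second, the definitional equivalences compose.

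One further small slip: separate constants $c_{k}$, each required only to have at least $k$ elements, do not force nonstandardness, since each can be interpreted by a standard finite set. You need a single constant $c$ satisfying the whole schema, as in your parenthetical.
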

\begin{proof}
First, note that the theory $T$ containing $\mathsf{PA}$ plus all
true $\Pi_{n}$ sentences is a $0^{(n)}$-c.e.\ theory. By the results
of Kaye and Wong \cite{kaye_wong}, $\mathsf{PA}$ is definitionally
equivalent to $\ZFfinptc$, and therefore $T$ is definitionally equivalent
to a $0^{(n)}$-c.e.\ extension of the theory $T^{n+1}$ from \thmref{nestedpakhomov}.
Say $\bar{T}$ is this theory, and let $\bar{T}^{+}$ be the same
with an added constant $c$ and an axiom schema ensuring that $c$
is a nonstandard element. Let $D$ be a $0^{(n+1)}$-computable model
of $\bar{T}^{+}$, in the signature containing the $n+2$ predicates
$S^{0}$ to $S^{n+1}$, which exists by the computable completeness
theorem (see \cite{harizanov}). Then, apply the relativized version
of \thmref{nestedpakhomov} $n+1$ times, obtaining a computable model
of $\bar{T}\upto S^{n+1}$. This shows that $\bar{T}\upto S^{n+1}$
is the theory we sought: It is definitionally equivalent to ``$\mathsf{PA}$
plus all true $\Pi_{n}$ sentences'', and it admits a computable
nonstandard model.
\end{proof}
As an aside, we note that Pakhomov posed another question in his original
paper, which has since been answered by Lutz and Walsh.
\begin{question**}[Pakhomov \cite{pakhomov}]
Is there a c.e.\ theory $T$ such that no definitionally equivalent
theory $T'$ admits a computable model?
\end{question**}
\begin{thm}[Lutz--Walsh \cite{lutz_walsh_tennenbaum}]
There is a c.e.\ theory $T$ such that every model of every theory
$T'$ definitionally equivalent to $T$ is noncomputable.
\end{thm}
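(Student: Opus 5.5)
The plan is to build $T$ so that every model of $T$ encodes a separating set for a fixed pair of computably inseparable c.e.\ sets. The argument then has to show that this encoding survives any change of signature.

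\textbf{Choice of $T$.} Fix disjoint computably inseparable c.e.\ sets $A,B\subseteq\N$. Let $T$ be in the language of constants $c_{0},c_{1},c_{2},\dots$ only, with axioms $c_{0}\neq c_{1}$, $c_{n}=c_{0}$ for $n\in A$, and $c_{n}=c_{1}$ for $n\in B$. Enumerating $A$ and $B$ makes $T$ c.e. In every model $N$ of $T$, the set $X_{N}=\{n: N\models c_{n}=c_{0}\}$ separates $A$ from $B$, so it is not computable. If the infinite-model case matters, I would add axioms forcing infinitely many elements.

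\textbf{Reduction to a computability claim.} Now let $T'$ be definitionally equivalent to $T$, and suppose $M'\models T'$ is computable. By definitional equivalence, $M'$ expands uniquely, on the same domain, to a model $N$ of the common extension. Each $c_{n}$ is $T'$-defined by some formula $\varphi_{n}(x)$, and each $T'$-symbol is $T$-defined by some $\psi$. The goal is to show that $X_{N}$ is computable, which is a contradiction.

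\textbf{Key step: quantifier elimination for $T$.} Every completion of a theory of equality with constants has quantifier elimination. So each $T'$-symbol is equivalent to a Boolean combination of atomic formulas $x_{i}=x_{j}$ and $x_{i}=c_{k}$, involving only finitely many constants. Using this, I want to show that each named element $c_{n}^{N}$ is recognisable from the atomic diagram of $M'$ in a uniform way. The idea is that the finitely many elements appearing as parameters in the $T'$-symbols are exactly the ones whose $T'$-atomic behaviour differs from that of a generic element. For all other elements, the structure is automorphism-invariant, which pins down $c_{n}^{N}$ among finitely many computably identifiable candidates.

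\textbf{Main obstacle.} The crux is uniformity in $n$. The map $n\mapsto\varphi_{n}$ need not be computable, and deciding $M'\models\varphi_{n}(a)$ for complex $\varphi_{n}$ is not computable in general. The naive design can fail here, because $T'$ might hide the constants behind unboundedly complex definitions.

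My first attempt at this step is to pass through the finite set $F$ of parameters used by the $T'$-symbols. Elements outside $F$ that equal some $c_{n}$ must be $T'$-definable, but they are automorphic to generic elements in the reduct to the $T'$-symbols. So all the $c_{n}$ must lie in $F$, and then $X_{N}$ is decided by a finite table. The difficulty is that $F$ is finite only when the signature of $T'$ is finite. If the signature is infinite, I would strengthen $T$, for instance by adding a rigid infinite skeleton such as a successor structure. The aim would be to ensure that each $c_{n}$ is forced to be quantifier-free recognisable from a computable piece of $M'$ that does not depend on the unknown definitions. Making this work for arbitrary signatures of $T'$ is the step I expect to need the real idea.
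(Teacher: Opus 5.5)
\textbf{The chosen $T$ does not witness the theorem.} The gap is more serious than the uniformity step you flag. Definitional equivalence only asks for \emph{some} common definitional extension, and it places no effectivity requirement on the translation $n\mapsto\varphi_n$. Your $T$ itself decides every bit of the encoded information: $T\vdash c_n=c_0$ for $n\in A$ and $T\vdash c_n=c_1$ for $n\in B$. So a noncomputable translation can simply hard-code those bits.

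Here is an explicit counterexample.
Consistency of $T$ forces $0\notin B$ and $1\notin A$. The set $C=\mathbb{N}\setminus(A\cup B)$ is infinite, since otherwise $A$ would be co-c.e.\ and hence computable. Fix a bijection $g\colon\mathbb{N}\to C\cup\{0,1\}$ with $g(0)=0$ and $g(1)=1$. Take unary predicates $\mathcal{L}_2=\{P_m\}_{m\in\mathbb{N}}$, and let $U=T\cup\{\forall x\,(P_m(x)\leftrightarrow x=c_{g(m)}):m\in\mathbb{N}\}$. In $U$ every constant is $\mathcal{L}_2$-definable:
$c_{g(m)}$ is defined by $P_m(x)$; for $n\in A$, $c_n$ is defined by $P_0(x)$; and for $n\in B$, $c_n$ is defined by $P_1(x)$.

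Hence $U$ is a common definitional extension of $T$ and of $T'=U\cap\mathrm{Sent}(\mathcal{L}_2)$. The theory $T'$ merely says that each $P_m$ is a singleton and $P_0\cap P_1=\emptyset$, plus infinitude if you add it. The structure $(\mathbb{N},(\{m\})_{m\in\mathbb{N}})$ is a computable model of $T'$: it is the reduct of the model of $T$ with $c_{g(m)}=m$, $c_n=0$ for $n\in A$, and $c_n=1$ for $n\in B$. In this model $X_N=A\cup\{0\}$ is noncomputable even though $M'$ is computable.

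So the goal of your reduction step is unattainable, and no extraction argument can repair it. Adding more structure, such as a successor skeleton, does not help either as long as $T$ still proves each bit of the separating set, since those bits can be hard-coded in the same way. Two smaller points:
your finite-signature case is vacuous for this $T$, because the infinitely many unconstrained $c_n$ with $n\in C$ cannot all be defined from finitely many symbols; and even there, knowing that every $c_n$ lies in a finite set $F$ does not give a finite table for $n\mapsto c_n$.

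\textbf{The missing idea.} The noncomputable object must \emph{vary across the models of $T$}, so that no single translation can absorb it. The paper notes that Lutz and Walsh's theories $T_{\mathrm{LW}}(R)$ are parametrized by infinite computable binary trees $R$ none of whose paths are ``guessable''. Presumably each model encodes a path through $R$, and the proof turns a computable model of any definitionally equivalent theory into a computable procedure for ``guessing'' that path. Choosing $R$ with no guessable paths then gives the contradiction.

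The possible values of your $X_N$ are exactly the paths through the computable tree of separating sets of $A$ and $B$, which has no computable path. The example above shows this alone is not enough, which is why a stronger hypothesis on $R$ is needed. Getting from a computable model to a guessing procedure also requires a genuine bound on how complex the translated definitions of the constants can be inside a computable model. Nothing in your sketch plays that role; in Lutz--Walsh it is played, as best I recall, by the model theory of mutually algebraic structures.
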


\subsection{A Strong Jump Inversion Theorem}

In the process of answering Pakhomov's question, it happened that
certain parts of our construction were not specific to the particularities
of arithmetic, and so we were able to extract a general-purpose theorem
for strong jump inversion.\footnote{Patrick Lutz was the first person to bring the idea of a broad generalization
to my attention. He also provided a working conjecture that was very
useful in developing the theory.}

For our purposes, a strong jump inversion type theorem is one of the
following form: ``If {[}structure of a certain type with a lot of
data{]} admits a copy computable in $X'$, then {[}reduct of the same
structure with fewer data{]} admits a copy computable in $X$.''
(See \rmkref{jumpinversion} below for a much more detailed description.)
Here are a few examples. We state them for the computable case, but
they all relativize uniformly in the obvious way.
\global\long\def\eqvrel{\simeq}%

\begin{thm}[{Folklore, {\cites[Theorem 9.1.4]{cst_downey_melnikov}}}]
\label{thm:JIeqv}Suppose that $(E,\eqvrel)$ is an equivalence relation
with infinitely many infinite equivalence classes, and for $n\in\N$
let $P_{\geq n}(x)$ be the predicate ``the equivalence class of
$x$ has at least $n$ elements''. If $(E,\eqvrel,\{P_{\geq n}\}_{n\in\N})$
admits a $0'$-computable copy, then $(E,\eqvrel)$ admits a computable
copy.
\end{thm}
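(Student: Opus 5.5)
Here is the plan: since we have $0'$-computable access to the class sizes, we build a computable equivalence structure by a finite-injury/limit construction. We use the Limit Lemma, which we are free to use, to get a computable approximation of the data. Fix a $0'$-computable copy $(E,\eqvrel,\{P_{\geq n}\})$. WLOG $E=\N$. Then the relation $x\eqvrel y$ and the function $n\mapsto$ ``does $x$ satisfy $P_{\geq n}$'' are $\Delta^0_2$.

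A useful reformulation is that the isomorphism type of $(E,\eqvrel)$ is determined by the function $c(k)=$ the number of classes of size exactly $k$ (for finite $k$), together with $\aleph_0$ infinite classes. We know there are infinitely many infinite classes. The predicate ``there are at least $m$ distinct classes of size $\geq k$'' is $\Sigma^0_1$ relative to $0'$, i.e.\ $\Sigma^0_2$: one guesses $m$ pairwise inequivalent elements each satisfying $P_{\geq k}$, and the relevant facts are $0'$-decidable. The standard characterization from the literature is that a structure with infinitely many infinite classes has a computable copy iff the set $\{(k,m): \text{at least } m \text{ classes of size } \geq k\}$, or equivalently of size exactly $k$, is suitably $\Sigma^0_2$. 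So the plan is to show this set $A$ is $\Sigma^0_2$, and then to build the computable copy directly from a $\Sigma^0_2$ set.

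For the construction, write $(k,m)\in A$ iff $\exists s\,\forall t\,R(k,m,s,t)$ with $R$ computable. For each pair $(k,m)$ and each witness $s$, we devote a fresh class, call it $C_{k,m,s}$, in our computable structure. While $\forall t\le$ stage $R(k,m,s,t)$ holds, we keep $C_{k,m,s}$ at size exactly $k$. If the witness $s$ is ever refuted, we have no way to delete the class. Instead we throw it into ``trash'' by letting it grow to infinite size, adding an element at each subsequent stage. This is exactly where the hypothesis of infinitely many infinite classes is used: extra infinite classes are harmless. We also separately seed infinitely many infinite classes. The result is that for each $(k,m)\in A$ exactly the true witnesses yield finite classes of size $k$. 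To avoid too many classes, we arrange for only the least surviving witness per $(k,m)$ to retain a finite class. This uses the usual $\Sigma^0_2$ trick: a class that is currently the least candidate but is later superseded by a smaller witness becoming viable again also gets inflated to infinity. Since the least true witness is eventually never disturbed, each $(k,m)\in A$ contributes precisely one permanent class of size $k$. Since ``at least $m$ classes of size $\ge k$'' is monotone in $m$, we then recover the counts of exactly-size-$k$ classes. It is cleaner to use pairs $(k,m)$ meaning ``at least $m$ classes of size exactly $k$''. That predicate is $\Sigma^0_2\wedge\Pi^0_2$ in general, so we must instead count classes of size $\ge k$ and realize them as size-$k$ classes that grow. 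This is the subtle bookkeeping step.

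I expect the main obstacle to be this last point: a class of size $\ge k$ in $E$ may have some larger finite size $k'$, which we do not know. So the honest construction should, for each class in $E$ (indexed by its least element $x$, a $\Delta^0_2$ notion), approximate its size from below. Here $P_{\geq n}(x)$ being $\Delta^0_2$ makes ``size of $[x]\ge n$'' $\Sigma^0_2$ uniformly. We give each guess at ``$x$ is least in its class'' its own computable class. We grow it to size $n$ whenever the $n$th $\Sigma^0_2$ witness for size $\ge n$ is currently leading, and inflate it to infinity on any wrong guess of leastness. The fully verified instance of this scheme is the cited \cite[Theorem 9.1.4]{cst_downey_melnikov}. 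Verification consists of checking that true (least-element, size) guesses stabilize and produce a class of exactly the right size, and that everything else becomes infinite.
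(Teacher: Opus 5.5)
There is a genuine gap. Your overall strategy is sound: approximate via the Limit Lemma, and dispose of mistakes by inflating them into infinite classes. But the construction you actually commit to rests on a false complexity claim.

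You say that ``at least $m$ classes of size exactly $k$'' is $\Sigma^0_2\wedge\Pi^0_2$. With the predicates $P_{\geq n}$ available it is $\Sigma^0_2$. Indeed, ``$[x]$ has exactly $k$ elements'' is $P_{\geq k}(x)\land\neg P_{\geq k+1}(x)$, which $0'$ decides, so asking for $m$ pairwise inequivalent such $x$ is $\Sigma^0_1$ relative to $0'$. The paper's proof uses exactly this fact. There, $0'$ recognizes each new class of size exactly $n$ and outputs one, giving a $0'$-computable c.e.-typed copy. The injury is then black-boxed into Theorem~\ref{thm:main_final}, with $\tau$ sending stray elements into infinite classes.

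The substitute you switch to cannot work. $P_{\geq n}$ holds on every infinite class, so with infinitely many infinite classes the set of $(k,m)$ with at least $m$ classes of size $\geq k$ is all of $\N^2$ and carries no information. Even restricted to finite classes, counts of classes of size $\geq k$ do not determine counts of size exactly $k$: one class of each finite size and two of each give identical, infinite, counts. So the step ``recover the exact counts by monotonicity'' fails.

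The per-element scheme in your last paragraph has the right shape but omits what makes it converge. You grow the class attached to $x$ to size $n$ while a $\Sigma^0_2$ witness for ``$[x]$ has at least $n$ elements'' is leading, and you inflate only when the leastness guess fails. When such a witness is later refuted, the class is already too large and cannot shrink. So you must also trash and restart on every size overshoot. To ensure this happens only finitely often, you need the size predicate to be $\Delta^0_2$ with a convergent approximation. Treated as merely $\Sigma^0_2$, a false instance can have infinitely many temporarily leading witnesses, hence infinitely many restarts and no permanent class of the correct size.

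Either of two repairs works:
\begin{itemize}
\item Use the approximation $s\mapsto\min\{n\leq s:\neg P_{\geq n+1}(x)[s]\}$ (with value $s$ if no such $n$ exists). It converges to the size of $[x]$ when $[x]$ is finite and tends to infinity otherwise. Trash the current attempt whenever this value falls below the attempt's size, or whenever $x$ ceases to look like the least element of its class.
\item Run your own least-surviving-witness scheme directly on the pairs $(k,m)$ such that there are at least $m$ classes of size exactly $k$, which is $\Sigma^0_2$ as shown above.
\end{itemize}
In either case the verification must be carried out, not deferred to the cited Theorem 9.1.4, which is the very statement being proved.
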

\begin{thm}[Knight--Stob \cite{knight_stob_computable_boolean_algebras}, Downey--Jockusch
\cite{downey_jockusch_boolean}]
\label{thm:JIbool}Suppose that $B$ is a Boolean algebra, and suppose
that the structure $(B,\land,\lor,\mathrm{isAtom})$ admits a $0'$-computable
copy. Then, the structure $(B,\land,\lor)$ admits a computable copy.
\end{thm}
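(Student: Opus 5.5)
The plan is a direct $0'$-approximation (finite-injury style) construction of a computable Boolean algebra $C$, together with a $\Delta_{2}$ isomorphism onto $B$, in the ``trash can be recycled'' spirit of Pakhomov's construction. Fix a $0'$-computable copy $D=(B,\land,\lor,\mathrm{isAtom})$. By the limit lemma, all of its data ($\land$, $\lor$, complements, and $\mathrm{isAtom}$) have computable approximations $D_{s}$ that converge pointwise.

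We build $C$ as an increasing union of finite Boolean algebras $C_{0}\subseteq C_{1}\subseteq\cdots$. At each stage $C_{s+1}$ arises from $C_{s}$ only by splitting some of its atoms. This makes $\land,\lor$ on $C=\bigcup_{s}C_{s}$ computable, and it means an element of $C$ is an atom of $C$ iff it is never split, a $\Pi_{1}$ property. Alongside, we maintain a finite partial map $f_{s}$ from the atoms of $C_{s}$ to pairwise disjoint nonzero elements of $D_{s}$ with join $1$. It is chosen so that the $n$-th element of $D$ (for $n\le s$) is, according to $D_{s}$, a join of elements in the image of $f_{s}$. We aim for $f=\lim_{s}f_{s}$ to exist on every element of $C$ and to induce an isomorphism $C\cong B$.

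The construction at stage $s$ proceeds as follows. For each atom $a$ of $C_{s}$ with $f_{s}(a)=d$:
\begin{itemize}
\item If $D_{s}$ currently believes $d$ is not an atom, we split $a$ into $a_{0},a_{1}$ and send them to a (currently believed) nontrivial partition $d=d_{0}\lor d_{1}$. We choose this partition to refine the next element of $D$ that needs to be captured.
\item If $D_{s}$ believes $d$ is an atom, we leave $a$ alone.
\end{itemize}
Splitting on a non-atom belief is safe to delay: once $\mathrm{isAtom}(d)$ stabilizes, we act correctly from then on. Moreover, every element of $D$ is eventually captured, since each is a finite join of pieces that we keep refining.

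The main obstacle is \emph{irreversible mistakes}. Suppose $D_{s}$ wrongly believed $d$ was a non-atom, and we split $a$. We cannot unsplit, and in the true $B$ the element $d$ is an atom. Similarly, an earlier approximation to $\land$, $\lor$ may have been wrong, so the partition of $d$ we copied is not a real partition in $B$. This is where the flexibility must be used. When a mistake is detected, we do not try to fix $a$. Instead we \emph{reassign} the $f$-images of its pieces: the surplus pieces are treated as trash and redirected to some element of $D$ that is currently believed to be a non-atom and is still ``unclaimed''. Concretely, we always reserve in $D$ a tail piece $e_{s}$, the complement of everything captured so far, and absorb the trash into the part of $C$ mapping to $e_{s}$.

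For this to work, the reserved tail must be able to absorb arbitrarily many extra pieces without changing the isomorphism type. That requires $e_{s}$ to have infinitely many atoms below it or an atomless part; this is true for some choice of $e_{s}$ whenever $B$ is infinite. The finite case is trivial, and we handle it nonuniformly. We then need to check, by an induction on the $0'$-settling times of the first $n$ elements and their atomhood, that each element's image changes only finitely often. This gives convergence of $f$ and the isomorphism. I expect the delicate point to be arranging the priority so that trash absorbed into $e_{s}$ does not itself get reassigned infinitely often; the fix I would try first is to let $e_{s}$ shrink as higher-priority elements of $D$ are captured, so each piece of trash is injured only finitely often. If this direct bookkeeping becomes unwieldy, I would fall back on the Downey--Jockusch route: represent $B$ as an interval algebra of a $0'$-computable linear order, where the atom predicate corresponds to adjacency, and invert the jump at the level of linear orders.
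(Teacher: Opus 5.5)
The paper does not prove this statement. It quotes it from Knight--Stob and Downey--Jockusch, says it has not managed to derive it from Theorem~\ref{thm:main_final}, and lists that derivation as an open question. So your sketch has to stand on its own. The framework is fine: $C$ is built by splitting atoms, atomhood in $C$ is $\Pi_1$, and $f_s$ is a partition map. The genuine gap is exactly the point you call delicate: a global trash tail is incompatible with the convergence of $f=\lim_s f_s$, on which your whole argument rests.

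Unlike the equivalence-relation case, you cannot park trash somewhere without disturbing what is already built. Since the images of the atoms of $C_u$ partition $1$, every nonzero $e'$ you hand to a trash piece is taken out of the image of some existing element of $C$. Fix a stage $s$. For $u\ge s$, the image $f_u(c)$ of an atom $c$ of $C_s$ is the join of the images of the atoms of $C_u$ below $c$. So if trash below $c$ is paid for with a tail part lying below $f_u(c')$ for a different atom $c'$ of $C_s$, then both $f(c)$ and $f(c')$ change, because $c\land c'=0$ in $C$ forever. Convergence therefore requires that, for every $s$, all but finitely many repairs move mass within a single atom of $C_s$. Your scheme provides nothing of the kind.

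Take $B$ atomless, let $c$ represent the first captured element $d$, and let the tail start under $c'=1-c$. You must refine below $c$ forever, each time using fresh elements of $D$ whose approximated $\land,\lor$ may be wrong, so nothing bounds the number of mistaken commitments below $c$. Each one paid for from the tail under $c'$ changes $f(c)$ and $f(c')$, so $\lim_s f_s$ does not exist. Shrinking $e_s$ only changes which sub-element of the tail pays, not the fact that the wrong region pays. Your induction on the settling times of the first $n$ elements of $D$ cannot control $f(c')$, which is affected by mistakes about every element below $d$.

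The missing idea is local, hierarchical repair. If a region's image still has room for the pieces committed below it, re-map those pieces inside that image; in the atomless example no trash is needed at all. Surplus may be exported only when a region has been split more often than its image allows, for instance when its image is a true atom or a join of fewer atoms than committed pieces. It must then go to a nearby region that can absorb unboundedly many pieces, i.e.\ one whose image is not a finite join of atoms. That property is $\Pi_1$ relative to the $0'$-computable atom relation, hence $\Pi_2$, so in general a computable construction cannot track it with finitely many mind changes. Arranging priorities so that every node's image nevertheless settles is the real content of the cited proofs, and your sketch does not supply it.

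Relatedly, $e_s$ defined as the complement of what has been captured can be a finite join of atoms, for example after a cofinite set has been captured in the finite--cofinite algebra. So ``some choice of $e_s$'' is not available to you as stated.

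Your fallback does not close the gap by itself either. Strong jump inversion fails for linear orders with adjacency: Jockusch and Soare built a low linear order with no computable copy, and for a low order the successor relation is $\Delta_2$. Any route through interval algebras must therefore replace the $0'$-computable order by a computable one that is not isomorphic to it but has an isomorphic interval algebra, and that replacement is where the work lies.
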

\begin{thm}[{Fellner, {\cites[VII.26]{montalban_cst}}}]
\label{thm:JIlinear}Suppose that $L$ is a linear order for which
every element admits a successor and predecessor, and suppose that
the structure $(L,<,S)$ admits a $0'$-computable copy, where $S(x,y)$
is the successor relation. Then, the structure $(L,<)$ admits a computable
copy.
\end{thm}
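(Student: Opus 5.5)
The plan is to write $L\cong\Z\cdot M$, where $M=L/{\sim}$ is the order type of the blocks and $x\sim y$ means that finitely many applications of $S$ or $S^{-1}$ take $x$ to $y$. I would then build a computable copy of $\Z\cdot M$ by a $0'$-approximation (limit lemma) construction. Let $D=(\N,<_D,S_D)$ be the given $0'$-computable copy. By the limit lemma, there are uniformly computable approximations $<_{D,s}$ and $S_{D,s}$ such that, for every fixed tuple of elements, the approximate atomic diagram restricted to that tuple eventually stabilizes. I will build a computable linear order $C=\bigcup_{s}C_{s}$ by adding finitely many fresh points at each stage. At stage $s$ I maintain a finite partial order-embedding $f_{s}$ from an initial segment $\{0,\dots,k_{s}\}$ of $D$, as currently guessed, into $C_{s}$. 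The aim is that $f=\lim_{s}f_{s}$ exists on every element and is an embedding $D\to C$ whose image meets every block of $C$. I also want every point of $C$ to end up a finite distance from some point of $\mathrm{im}(f)$, and $C$ to have no extra adjacencies beyond those coming from $S_D$, so that $C\cong(L,<)$.

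The key steps, in order, are as follows.
\begin{enumerate}
\item At stage $s$, find the longest initial segment $\{0,\dots,k\}$ on which the current approximation agrees with the one at stage $s-1$. Keep $f_{s}$ on that segment. Treat elements beyond it as \emph{injured}: their old images stay in $C$, since points can never be removed, but become ``trash''.
\item Re-map injured elements of $D$ to fresh points of $C$, inserted in the positions dictated by the new guess for $<_{D,s}$. This is always possible because $C_{s}$ is finite.
\item Whenever the current guess says $S_D(x,y)$, keep $f(x)$ and $f(y)$ adjacent in $C_{s}$ by never inserting between them. Whenever the guess says $x$ and $y$ are in different blocks so far (no short $S$-path at stage $s$), insert a fresh point between $f(x)$ and $f(y)$. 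Over time this grows $\Z$-chains on both sides of each image point, which ensures the blocks of $C$ are infinite in both directions.
\item Absorb trash. Each trash point $c$ is assigned to the current image of the least-indexed element $x$ of $D$ whose block $c$ currently lies in, determined by adjacency in $C_{s}$. We arrange that $c$ stays at finite distance from $f(x)$, by only ever inserting new points between $c$ and $f(x)$ finitely often. Since each fixed element is injured only finitely often, every trash point's assignment eventually stabilizes.
\end{enumerate}

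The verification is a finite-injury argument. Each $x\in D$ is injured only finitely often, so $f(x)=\lim_{s}f_{s}(x)$ exists and $f$ preserves $<$. Two elements $x,y$ with $S_D(x,y)$ eventually have adjacent images forever, while elements in distinct blocks get infinitely many points inserted between their images. Hence the block structure of $C$ restricted to $\mathrm{im}(f)$ is exactly $M$, and each block is a copy of $\Z$.

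The main obstacle is step 4: showing that no trash point ends up in a ``new'' block of $C$ that does not correspond to a block of $M$, such as a stray finite chain or a new copy of $\Z$ between two real blocks. I would first try to fix the following invariant. At every stage, every point of $C_{s}$ lies in the current $S$-adjacency chain of some $f_{s}(x)$ with $x\le s$. When $x$ is injured, its chain is re-glued to the chain of the least element currently guessed to share a block with it, or to the nearest correct neighbour. Since the relevant approximations to $S_D$ and $<_D$ on $\{0,\dots,x\}$ settle, each point of $C$ is re-glued only finitely often and therefore lands permanently in some genuine block. If that invariant proves too delicate, the fallback is to show first that $M$ has a $\Delta^{0}_{3}$ copy, using the $0'$-decidable $<_D$ and $S_D$ together with the $\Sigma^{0}_{2}$ block relation. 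One would then run the classical argument that $\Z\cdot M$ is computable for $\Delta^{0}_{3}$ orders $M$, with the same absorption of trash into $\Z$-blocks as the core difficulty.
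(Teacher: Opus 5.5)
\textbf{There is a genuine gap, and it sits in step 4, which you yourself flag as the main obstacle.}

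Your absorption rule glues an injured chain to ``the least element currently guessed to share a block with it, or to the nearest correct neighbour''. This rule is keyed to the block relation $\sim_D$, which is $\Sigma^0_2$. Spurious $S$-paths through elements whose approximations have not settled can be visible at infinitely many stages, and ``correct'' cannot be recognized at any finite stage. So the gluing target need not converge. The anchor can also have arbitrarily large index, so the fact that each fixed element of $D$ is injured finitely often does not bound how often a fixed point of $C$ is re-glued. Gluing to a chain that is not positionally adjacent would also clash with images lying in between.

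Step 3 has a concrete defect too: guessed successors cannot always keep adjacent images. Suppose $x<_D y$ are linked through a successor $z$ of large index. Your rule inserts fresh points between $f(x)$ and $f(y)$ until $z$ appears. At that point the approximation on $\{0,\dots,\max(x,y)\}$ has not changed, so step 1 injures neither $x$ nor $y$. Now $f(x),f(z),f(y)$ cannot be made consecutive. So $f$ can only be asked to preserve blocks. The insertions are unnecessary anyway: if $x\not\sim_D y$, infinitely many elements of $D$ lie between them, so their images are automatically infinitely far apart.

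Your fallback does not escape the problem. $M$ has a $\Delta^0_3$ copy exactly when $(\Z\cdot M,<,S)$ has a $0'$-computable copy. So ``$\Z\cdot M$ is computable for $\Delta^0_3$ $M$'' is an equivalent reformulation of the statement, with the same absorption problem.

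\textbf{What is missing is a purely positional absorption rule with a well-founded measure.} One way to do it is as follows.
\begin{itemize}
\item Keep $C_s$ partitioned into convex intervals $I_x$, one for each placed $x$, ordered by the current approximation, with $f_s(x)\in I_x$.
\item Use only the longest initial segment of $D$ on which the approximation is consistent, meaning a linear order in which $S$-related elements have nothing between them.
\item Add fresh points only at boundaries between intervals.
\item When the approximation on $\{0,\dots,j\}$ changes, merge the intervals of all injured $x\ge j$ into the nearest surviving interval on the left. If there is none on the left, merge to the right; if nothing survives, merge into the new interval of $0$.
\end{itemize}
Then the following hold.
\begin{itemize}
\item A point's owner index never increases, and it drops strictly except at the finitely many stages with $j=0$, so it stabilizes.
\item Nothing is ever added inside an interval, so every point ends at finite distance from its final owner's image.
\item Once the successor chain from $x$ to $y$ has settled, consistency forbids placing anything between consecutive links, so $x\sim_D y$ implies $f(x)\sim_C f(y)$.
\end{itemize}
This gives $C\cong\Z\cdot M\cong L$.

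\textbf{The paper handles the same issue by a different mechanism.} It builds the computable shadow order $L_+$ and its $0'$-c.e.\ subset $L_{+-}$. This yields a $0'$-computable c.e.-typed copy in which the distance predicates $D_{\geq i}$ are positive c.e.\ information. It then applies Theorem~\ref{thm:main_final}:
\begin{itemize}
\item $\tau$ re-matches an orphaned point to the element at exact distance $k$ from its nearest matched neighbour.
\item $\varepsilon\tau$ forbids later insertions between them.
\item $\chi$ ensures the computable copy never places more points in a gap than $D$ certifiably has.
\end{itemize}
That route gives a genuine isomorphism, so successors are preserved, at the cost of the shadow-order construction. The interval-merging route is more elementary but only tracks blocks.
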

It would be interesting to find a broad theorem from which Theorems
\ref{thm:JIeqv}, \ref{thm:JIbool}, and \ref{thm:JIlinear} could
follow as corollaries. We have had some amount of success: Our general-purpose
\thmref{main_final} is (with some work) able to subsume Theorems
\ref{thm:JIeqv} and \ref{thm:JIlinear} (respectively in Sections
\ref{subsec:eqvrelsinf} and \ref{subsec:linord}), as well as some
other theorems in the literature (see Section \ref{sec:prevs_applications}).
We have not yet had success in obtaining \thmref{JIbool}, but we
do not believe this to be impossible -- rather, it happens that Boolean
algebras are more complex than most structures we work with in this
paper, and would require more significant effort to apply \thmref{main_final}
to.

We are not the first to seek such a theorem -- the 2018 paper \cite{strong_jump_inversion}
also contains a general-purpose theorem (Theorem 2.5) from which results
related to the above are partially recovered. Our theorem seems to
follow similar themes as the theorem found in \cite{strong_jump_inversion},
but our assumptions have a different flavor and our conclusion goes
in a different direction. Notably, our result requires some amount
of creativity in its application -- one must, in most cases, construct
an intermediate structure to apply our \thmref{main_final} to in
order to obtain a jump inversion result, while Theorem 2.5 from \cite{strong_jump_inversion}
is relatively straight-forward in its application to examples.
\begin{remark}
\label{rmk:jumpinversion} For a general framework on the meaning
of ``jump inversion theorem'', we refer the reader to Section 1.1
of \cite{strong_jump_inversion}. For completeness and conciseness,
we provide a very brief summary. The content and nomenclature of this
remark is mostly taken from \cite{montalban_cst}.

First, let us remark that there is a standard notion of ``the jump
of a structure'', see e.g.\ \cites[Definition IX.1]{montalban_cst},
which is known to subsume several other different notions of ``jump
of a structure'' that have arisen over the decades; see Historical
Remark IX.2 in \cite{montalban_cst} and Remark 1.11 in \cite{strong_jump_inversion}.
The notion of ``jump of a structure'' under discussion consists
of adding a certain predicate, or rather, countable family of predicates,
called Kleene's r.i.c.e.\ complete relation, to the language of a
structure.

While the definition of Kleene's r.i.c.e.\ complete relation is completely
generic and applies to any structure in the usual sense, it is markedly
unnatural, in the sense that it has surely never arisen outside the
context of computability. However, for some particular types of structures,
Kleene's relation may be replaced (up to ``relative Turing equivalence'',
which is a natural type of equivalence of relations on a structure
which compares their intrinsic computable complexity) by much more
natural families of relations. For example, on an arbitrary linear
ordering $(L,\leq)$, the adjacency relation together with $0'$ (encoded
as a countable family of zero-ary predicates) is relatively Turing
equivalent to Kleene's r.i.c.e.\ complete relation \cites[Example II.29]{montalban_cst},
and thus it is reasonable to identify the jump of a linear ordering
$(L,\leq)$ with the structure $(L,\leq,S)$ plus an oracle for $0'$.\footnote{\label{fn:adjvsnegadj}We may instead prefer the negation of the adjacency
relation, because checking non-adjacency in a linear order is c.e.\ while
checking adjacency is co-c.e.}

The fact that this oracle for $0'$ appears, even when the rest of
the ``computable information'' of the jump has been nicely encompassed
by the natural relation $S$, may be seen as a flaw. This gives rise
to the notion of ``the/a structural jump of a structure''. We refer
to Definition 1.12 in \cite{strong_jump_inversion}, which we paraphrase
in slightly different but equivalent terms:
\begin{definition*}
A structural jump of a structure $\mathcal{A}$ is an expansion $\mathcal{A}'=(\mathcal{A},(R_{i})_{i\in\N})$
such that the relations $R_{i}$ are uniformly r.i.c.e.\ in $\mathcal{A}$
(this is the natural analogue of the notion of c.e.\ in this context),
and such that Kleene's r.i.c.e.\ complete relation $K^{\mathcal{A}}$
is relatively intrinsically computable from $\mathcal{A}'\oplus0'$
(this the natural analogue of computable from the oracle $\mathcal{A}'\oplus0'$).
\end{definition*}
It should be noted that the jump of a structure is always a structural
jump of said structure, but the benefit of this definition is that,
in some cases, there exist natural structural jumps, where natural
means ``has been studied by non-logicians''. The following three
examples should seem familiar:
\begin{itemize}
\item If $(E,\eqvrel)$ is an equivalence relation, the expansion $(E,\eqvrel,\{P_{\geq n}\}_{n\in\N})$
from \thmref{JIeqv} is a structural jump of $(E,\eqvrel)$ \cites[Exercise II.45.(b)]{montalban_cst},
\item If $B$ is a Boolean algebra, the expansion $(B,\mathrm{isAtom})$
is a structural jump of $B$ \cite{strong_jump_inversion},
\item If $(L,\leq)$ is a linear ordering, the expansion $(L,\leq,\neg S)$
is a structural jump of $(L,\leq)$ \cite{strong_jump_inversion}.\footnote{The authors claim that ``{[}to form the structural jump,{]} the relation
$\mathit{succ}(x,y)$ is sufficient for linear orders'', but this
is a typo, in letter if not in spirit, because as mentioned in Footnote
\ref{fn:adjvsnegadj}, the successor relation is co-r.i.c.e. and not
r.i.c.e.}
\end{itemize}
There is an established analogue of the Friedberg Jump Inversion theorem
in this context, due to Soskov and Soskova \cite{soskova_soskov_jump_inversion},
and later independently rediscovered by Montalban \cite{montalban_notes_jump_structure}.
Two versions thereof, slightly different to each other, may be found
in \cites[Theorem IX.9]{montalban_cst} and \cites[Theorem 1.13]{strong_jump_inversion}.
We state only a particular case:
\begin{thm*}[Soskov--Soskova \cite{soskova_soskov_jump_inversion}, Montalban
\cite{montalban_notes_jump_structure}]
If there is a $0'$-computable copy of a structural jump of a structure,
then there is a low copy of the original structure.
\end{thm*}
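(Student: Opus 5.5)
The plan is to run a Friedberg-style forcing construction inside the category of structures, building a copy of $\mathcal{A}$ that is generic for $\Sigma_1$ facts. Let $\mathcal{A}'=(\mathcal{A},(R_i)_{i\in\N})$ be the structural jump, and fix a $0'$-computable copy of it. Since the $R_i$ are uniformly r.i.c.e.\ in $\mathcal{A}$, each of them is defined by a uniformly computable disjunction of existential formulas, say $R_i(\vec x)\leftrightarrow\llorc_j\exists\vec y\,\psi_{i,j}(\vec x,\vec y)$ with $\psi_{i,j}$ finitary quantifier-free. I will build $0'$-computably a bijection $g\colon\N\to A$ as a union of finite injective tuples $\bar p_0\subseteq\bar p_1\subseteq\cdots$. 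The copy $\mathcal{B}=g^{-1}(\mathcal{A})$ is then $0'$-computable, and I must show $\mathcal{B}'\le_T0'$. Throughout, $0'$ can decide finitary existential facts about $\mathcal{A}$ and can evaluate the $R_i$, since both are given by the $0'$-computable copy of $\mathcal{A}'$.

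At stage $s$, with condition $\bar p=\bar p_s$, I handle the $e$-th requirement, where $e=s$: decide whether $\Phi_e^{D(\mathcal{B})}(e)$ converges. The requirement is met if some finite extension $\bar q\supseteq\bar p$ has an atomic diagram making $\Phi_e(e)$ converge. This question has the form ``does $\mathcal{A}$ satisfy $\exists\vec y\,\theta(\bar p,\vec y)$'', where $\theta$ is a computable infinitary disjunction of quantifier-free formulas, so it is $\Sigma^{\mathrm{c}}_1$ over $(\mathcal{A},\bar p)$. That is exactly the kind of question that Kleene's r.i.c.e.\ complete relation $K^{\mathcal{A}}$ answers uniformly. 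Because $K^{\mathcal{A}}$ is relatively intrinsically computable from $\mathcal{A}'\oplus0'$, and we hold a $0'$-computable copy of $\mathcal{A}'$, the oracle $0'$ decides this question, uniformly in $e$ and $\bar p$.

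If the answer is yes, $0'$ searches for a witness $\bar q$ and we set $\bar p_{s+1}=\bar q$. If not, we keep $\bar p$. In both cases we then extend $\bar p$ by the least element of $A$ not yet in the range, which keeps $g$ onto.

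The output is recorded as follows. In the ``yes'' case, $\Phi_e^{D(\mathcal{B})}(e)$ really converges, because the diagram of $\bar q$ is an initial segment of $D(\mathcal{B})$ (I fix a coding in which the diagram of the first $n$ elements is an initial segment). In the ``no'' case, every finite extension fails, so it diverges. Hence $\mathcal{B}'\le_T0'$, and since $\mathcal{B}\le_T\mathcal{B}'$, the copy $\mathcal{B}$ is low.

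The main obstacle is the use of relative intrinsic computability in the step that invokes it. That property only says that for every copy $\hat{\mathcal{A}}$, $K^{\hat{\mathcal{A}}}\le_T D(\hat{\mathcal{A}}')\oplus0'$. I need this applied to the parameters $\bar p$, which are named by elements of the fixed $0'$-computable copy of $\mathcal{A}'$, and I need it uniformly. I would first pass to the syntactic characterization: $K^{\mathcal{A}}$ is defined by a computable $\Sigma^{\mathrm{c}}_1$-over-($\mathcal{A}'\oplus0'$) formula and its complement likewise, which is the standard equivalence of relative intrinsic computability with $\Delta^{\mathrm{c}}_1$-definability. This is where the Soskov--Soskova/Montalbán machinery would be cited. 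With that in hand, the oracle $0'$ together with the $0'$-computable copy can evaluate the definition directly, and the construction goes through uniformly and relativizes.
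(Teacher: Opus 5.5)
The paper does not prove this statement; it only quotes it as background from Soskov--Soskova and Montalb\'an. Your argument is correct and is essentially the standard proof from those sources. You use the structural-jump hypothesis to get $K^{\hat{\mathcal A}}\le_T 0'$ for the copy $\hat{\mathcal A}$ underlying your $0'$-computable copy of $\mathcal A'$. You then build, with $0'$, an enumeration of $\hat{\mathcal A}$ in which every $\Sigma^{\mathrm c}_1$ question ``is there $\bar q\supseteq\bar p$ with $\Phi_e^{D(\bar q)}(e)\downarrow$'' is decided. This gives $D(\mathcal B)'\le_T 0'$.

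Three small corrections.

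\emph{The obstacle you flag is not real.} Apply the definition of relative intrinsic computability to the single copy $\hat{\mathcal A}$. This yields one fixed functional $\Psi$ with $K^{\hat{\mathcal A}}=\Psi^{D(\hat{\mathcal A}')\oplus 0'}$. Since $K^{\hat{\mathcal A}}$ already ranges over all finite tuples of $\hat{\mathcal A}$ and all $\Sigma^{\mathrm c}_1$ indices, the answers for every $e$ and every $\bar p$ come out of that one reduction. No detour through $\Delta^{\mathrm c}_1$-definability is needed. That detour would also bring in finitely many parameters, so your closing claim of full uniformity is overstated, though harmless since the statement asks for none.

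\emph{Your opening justification is wrong.} You say $0'$ decides finitary existential facts ``since both are given by the $0'$-computable copy.'' In general, existential facts about a $0'$-computable structure are only $\Sigma^0_2$. They are $0'$-decidable here precisely because $K^{\hat{\mathcal A}}\le_T 0'$, which you do invoke correctly later.

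\emph{The r.i.c.e.\ hypothesis is unused.} The uniform r.i.c.e.\ condition on the $R_i$ plays no role in this direction. Only two things are used: that the copy of $\mathcal A'$ is $0'$-computable, and that $K$ reduces to $\mathcal A'\oplus 0'$.
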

If this is the theorem that we may refer to by the name of ``jump
inversion'', then it stands to reason that we might make the following
definition: A \emph{strong jump inversion theorem} is one of the type:
If there is a $0'$-computable copy of the structural jump of a {[}insert
class of structures here{]}, then there is a \uline{computable} copy
of the original structure.

We prefer to slightly weaken this ``definition'', so that we are
not beholden every time to verifying that certain relations make up
a structural jump of a structure, leading to a wider class of possible
results:
\begin{definition*}
A \emph{strong jump inversion theorem} is one of the type: If there
is a $0'$-computable copy of a {[}insert class of structures here{]},
say $\mathcal{A}$, together with {[}additional structure{]}, then
there is a computable copy of $\mathcal{A}$.
\end{definition*}
We finish this remark with the following fact, which further justifies
how we've chosen to weaken the notion of strong jump inversion theorem.
Suppose that we've shown that, for structures $\mathcal{A}$ in some
class, if there is a $0'$-computable copy of $\mathcal{A}$ together
with additional relations $(R_{i})_{i\in\N}$, then there is a computable
copy of $\mathcal{A}$. Then, under some relatively weak assumptions
on the relations $(R_{i})_{i\in\N}$ (e.g.\ that they are uniformly
r.i.c.e., or more generally that they're relatively intrinsically
computable from the jump of $\mathcal{A}$) we can actually conclude
that if there's a $0'$-computable copy of \emph{any} structural jump
of $\mathcal{A}$ then there is a computable copy of $\mathcal{A}$.
This is because $0'$ plus any structural jump of $\mathcal{A}$ suffices
to uniformly compute the relations $(R_{i})_{i\in\N}$.
\end{remark}
\medskip{}

We will now present our main result, \thmref{main_final}, but first
we must make some necessary definitions. There are two important components
to \thmref{main_final}: A computable-structure-theoretic definition
(\defref{cetypedpresentation}), and an almost entirely model-theoretic
condition (\defref{qetp}).

Henceforth, we restrict our attention to relational structures.

\begin{definition}
\label{def:atomicformula}For our purposes, an \emph{atomic formula}
in the variables $\vec{x}$ is a formula obtained by applying a single
predicate to some of the variables in the tuple $\vec{x}$ (possibly
repeated and reordered). Notably, under our definition, the atomic
formulas are \emph{not} closed under negation -- over the signature
$\Lang=\{\leq\}$ of a linear order, $x\leq y$ is considered an atomic
formula, but $\neg(x\leq y)$ is not.
\end{definition}
\begin{definition}
\label{def:atomictype}An \emph{atomic type} in the variables $\vec{x}$
is simply a set of atomic formulas in $\vec{x}$. For a tuple of elements
$\vec{b}$ in a structure, the \emph{atomic type of $\vec{b}$} consists
of the set of atomic formulas satisfied by $\vec{b}$.
\end{definition}
\begin{remark}
It is important to note that, while atomic types and quantifier-free
types contain the same data in a lot of contexts, the same shall not
be the case in our scenario. We will work with c.e.\ indices for
atomic types, from which it is not generally possible to obtain a
c.e.\ index for the corresponding quantifier-free type (which, indeed,
may not even be c.e.).
\end{remark}
\begin{definition*}[\ref{def:cetypedpresentation}]
Let $\Lang'$ be a computable relational signature.
\global\long\def\tp{\mathrm{tp}}%
 A\emph{ $0'$-computable c.e.-typed structure} $D$ consists of a
$0'$-computable function $t$, whose domain is the finite power-set
of an initial segment of $\N$ (this initial segment we call the \emph{domain}
of $D$), which takes as input the strong index of a finite set of
natural numbers $\vec{n}=\{n_{0}<n_{1}<\dots<n_{k-1}\}$ and outputs
the c.e.\ index $t(\vec{n})$ for an atomic $\Lang'$-type $\tp(\vec{n})$
in the variables $(x_{n_{0}},\dots,x_{n_{k-1}})$, with the compatibility
condition that if $\vec{n}\subseteq\vec{m}$ then $\tp(\vec{n})\subseteq\tp(\vec{m})$.
\end{definition*}
\begin{remark}
A $0'$-computable c.e.-typed structure $D$ induces a natural $\Lang'$
structure on the domain of $D$. Thus, in a slight abuse of notation
we often identify $D$ with the $\Lang'$-structure obtained in this
manner, allowing us to speak, for example, of an isomorphism between
a $0'$-computable c.e.-typed structure and other structures. In particular,
if $D_{0}$ is a pre-existing $\Lang'$-structure we define a \emph{$0'$-computable
c.e.-typed copy of $D_{0}$} to mean a $0'$-computable c.e.-typed
structure $D$ which is isomorphic to $D_{0}$.
\end{remark}
\begin{remark}
\defref{cetypedpresentation} is very much not symmetric with regard
to negation of predicates. It can make a stark difference as to whether
our language includes, for every predicate $P$, its negation $\neg P$
or not.
\end{remark}
In order to introduce our model-theoretic definition, we need to recall
some concepts from computable structure theory.

\begin{definition*}[\ref{def:comp_inf_lang}]
We shall work in the Computable Infinitary Language, as described
in \cites[Chapter III]{montalban_cst2}. We will only work with a
very small fragment of this language, which we now describe.

Given a relational signature $\Lang'$, we define the set of \emph{positive
finitary conjunctive formulas} as the set of formulas of the type
$\varphi(\vec{x})\equiv P_{1}\land\dots\land P_{n}$, where each $P_{j}$
denotes a relation in $\Lang'$ being applied to a tuple of variables
from $\vec{x}$, in any order, with the possibility of repeated or
omitted variables. We also allow for $P_{j}$ to denote $\top$, the
``true'' predicate, and $\bot$, the ``false'' predicate.

We define the set of\emph{ positive computable quantifier-free disjunctive
formulas}, denoted by $\llorc$, as the set of formulas in finitely
many variables of the type
\[
\varphi(\vec{x})\equiv\llor_{i<\omega}\varphi_{i}(\vec{x}),
\]
where $\{\varphi_{i}\}_{i<\omega}$ is a computable sequence of positive
finitary conjunctive formulas.

Finally, we define a third set of formulas, denoted by $\llorc_{2}$,
as those formulas in finitely many variables of the type
\[
\varphi(\vec{x})\equiv\llor_{i<\omega}\left(\psi_{i}(\vec{x})\land\neg\varphi_{i}(\vec{x})\right),
\]
where $\{\psi_{i}\}_{i<\omega}$ is a computable sequence of positive
finitary conjunctive formulas, and $\{\varphi_{i}\}_{i<\omega}$ is
a computable sequence of $\llorc$ formulas.

One can imagine iterating this definition to obtain a hierarchy of
quantifier-free formulas $\left\{ \llorc_{n}\right\} _{n\in\N}$ in
the computable infinitary language, but these first two levels will
be enough for our work.
\end{definition*}
Given a formula in the computable infinitary language, say $\varphi(\vec{x})$,
a structure $D$, and a tuple of elements $\vec{b}$ of $D$, we assign
a truth value to the expression $D\vDash\varphi(\vec{b})$ in the
obvious way. See \cites[Chapter III]{montalban_cst2} for more details.

We are now ready to make our model-theoretic definition. It is quite
technical, so we first provide some intuition and motivation.
\begin{remark}
\label{rmk:qetpmotivation}Many strong jump inversion type results
follow a pattern as follows. We are given a $0'$-computable structure.
Using ideas similar to Shoenfield's Limit Lemma, imagine this structure
as being given by a computable process that sometimes makes mistakes
(particularly, sometimes adds ``fake'' elements and later realizes
this error). Now, we construct a computable copy of our structure
as follows: Copy the limit-computable process, and when you realize
that an element was incorrectly added, repurpose it to serve as another
element that you somehow know exists -- oftentimes, the structure
you are working with admits a class of elements that can be reliably
used to ``dispose of trash''. For example, in \thmref{JIeqv}, any
element added by accident can be relegated to an infinite equivalence
class. This will be the role of the $\tau$ predicate defined in \defref{qetp}.
(The letter $\tau$ was chosen to stand for ``trash'', in the sense
of ``thing to be disposed of''.)

Sometimes, we do not immediately know what kind of trash a given element
will be relegated to. As an example, \propref{eqvfin} below regards
a certain type of equivalence relation with only finitely many infinite
equivalence relations. Then, if an element $x$ is meant to be disposed
of, we cannot relegate it to an infinite equivalence class -- instead,
we relegate it to a ``large enough'' equivalence class. If the set
of sizes of equivalence classes is c.e., this is easy enough, but
\propref{eqvfin} also encompasses some situations in which the set
of sizes of equivalence classes is not c.e.\ (or even $0'$-c.e.).
Thus, we provide relatively weak assumptions on the computability
of $\tau$.

There are a few more considerations. Some mistakes are too costly
to recover from. For example, back to \thmref{JIeqv}, we cannot tolerate
adding an element that breaks the assumption that $\eqvrel$ is an
equivalence relation -- e.g.\ an element $x$ with $x\eqvrel y$
and $x\not\eqvrel z$, where $y$ and $z$ are two previously-added
elements with $y\eqvrel z$. In most concrete cases, it is easy to
assume without loss of generality that such pathological cases never
show up, but this must be explicitly handled when creating a general-purpose
theorem. This will be the role of the $\chi$ predicate defined in
\defref{qetp}.

For technical reasons, we also introduce a third predicate, which
we call
\global\long\def\etau{\mathord{\varepsilon\tau}}%
 $\etau$ (where the $\varepsilon$ stands for ``exists''). The
motivation for why this predicate is necessary is a little too elaborate
to explain shortly in words, but very roughly, it has to do with the
fact that, if an element has been relegated to be trash by $\tau$,
but it has been relegated to an `incorrect' or `fake' trash element,
the feasibility of adding more elements (satisfying some prescription
$Q$) as measured by $\chi$ is unreliable. As such, we need a proxy
for $\chi(\vec{x},y)\equiv\exists_{\vec{z}}Q(\vec{x},y,\vec{z})$
that doesn't rely on $y$, which is thus the role of $\etau(\vec{x})\equiv\exists_{y}\exists_{\vec{z}}\left[\tau(\vec{x},y)\land Q(\vec{x},y,\vec{z})\right]$.
\end{remark}
\begin{definition*}[\ref{def:qetp}]
Let $\Lang\subseteq\Lang'$ be computable signatures, and let $D$
be a structure over $\Lang'$. We say that $D$ satisfies the \emph{Computable
Positive Quantifier Elimination and Trash Existence Property}, abbreviated
to \emph{QETP,} if the following properties hold:
\begin{enumerate}[label=(\alph*)]
\item $\Lang$ is finite, closed under negation, and includes the predicate
$\neq$,
\item For every quantifier-free $\Lang$ formula $q(\vec{x},\vec{y})$,
there is a $\llorc$\emph{ }$\Lang'$-formula $\chi_{q}(\vec{x})$,
computable from $q$, such that
\[
D\vDash\forall_{\vec{x}}\left[\chi_{q}(\vec{x})\leftrightarrow\exists_{\vec{y}}q(\vec{x},\vec{y})\right],
\]
\item There is a $0'$-computable \emph{partial} function denoted $\tau$,
which takes as input a pair $(q(\vec{x},y,\vec{z}),\varphi(\vec{x}))$,
where $q(\vec{x},y,\vec{z})$ is a quantifier-free $\Lang$-type and
$\varphi(\vec{x})$ is a \emph{finitary} quantifier-free $\Lang'$-formula,
whose output is a $\llorc_{2}$ formula denoted $\tau_{q\varphi}(\vec{x},y)$
that satisfies the following two properties:
\begin{itemize}
\item For every tuple $(\vec{x},y,\vec{z})$ of elements in $D$, of $\Lang$-type
$q(\vec{x},y,\vec{z})$, there is a formula $\varphi(\vec{x})$ satisfied
by $\vec{x}$ such that $(q,\varphi)$ is in the domain of $\tau$.
Equivalently, if $\vec{b}$ is a tuple of elements in $D$ such that
$D\vDash\chi_{q}(\vec{b})$, there is $\varphi(\vec{x})$ such that
$(q,\varphi)$ is in the domain of $\tau$ and $D\vDash\varphi(\vec{b})$.
Moreover,
\item The following formula holds in $D$:
\[
D\vDash\forall_{\vec{x}}\left[\chi_{q}(\vec{x})\land\varphi(\vec{x})\rightarrow\exists_{y}\tau_{q\varphi}(\vec{x},y)\right].
\]
\end{itemize}
\item Given a pair $(q(\vec{x},y,\vec{z}),\varphi(\vec{x}))$ in the domain
of $\tau$ as above, together with a quantifier-free $\Lang$-type
$Q(\vec{x},y,\vec{z},\vec{w})$ extending $q(\vec{x},y,\vec{z})$,
there is a $\llorc$ formula $\etau_{q\varphi Q}(\vec{x})$, computable
from the triple $(q,\varphi,Q)$, such that
\[
D\vDash\forall_{\vec{x}}\left[\exists_{y}\left[\chi_{Q}(\vec{x},y)\land\tau_{q\varphi}(\vec{x},y)\right]\leftrightarrow\etau_{q\varphi Q}(\vec{x})\right].
\]
\item \label{enu:qetp3-1}Finally, for all $q$, $\varphi$, $Q$, we require
\[
D\vDash\forall_{\vec{x},y}\left[\etau_{q\varphi Q}(\vec{x})\land\tau_{q\varphi}(\vec{x},y)\rightarrow\chi_{Q}(\vec{x},y)\right].
\]
\end{enumerate}
\end{definition*}
\medskip{}

We now are now ready to state the central result of this paper.
\begin{thm*}[\ref{thm:main_final}]
If $D$ is a $0'$-computable c.e.-typed structure over the signature
$\Lang'\supseteq\Lang$ satisfying the QETP, then the reduct $D\upto\Lang$
admits a computable copy $M$. There is a $0'$-computable isomorphism
between $D\upto\Lang$ and $M$. This result is uniform in the index
for $D$ and an index for witnesses that $D$ satisfies the QETP,\footnote{This includes: An index for which predicates in $\Lang$ are the negation
of each other, an index for the computable functions $\chi$ and $\etau$,
and an index for the $0'$-computable function $\tau$.} and relativizes uniformly.
\end{thm*}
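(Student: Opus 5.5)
We prove \thmref{main_final} by a priority-free, finite-injury style construction in which $M$ is built in stages as a computable $\Lang$-structure, together with a $0'$-computable (limit) map $f_s$ from an initial segment of the domain of $D$ into $M$ that stabilizes to an isomorphism $D\upto\Lang\cong M$.

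\textbf{Approximating $D$.} By the Limit Lemma, fix computable approximations $t_s$ to the type function $t$ of $D$ and $\tau_s$ to the partial function $\tau$. At stage $s$, the current guess is that the $\Lang$-type of $\{0,\dots,k\}$ in $D$ is the $\Lang$-part of $t_s$, enumerated up to $s$. Since $\Lang$ is finite and closed under negation and contains $\neq$, a genuine $\Lang$-type of a finite tuple is a complete quantifier-free type. Once the c.e.\ index has stabilized, for each pair $P,\neg P$ exactly one eventually appears, so the true finite $\Lang$-type is $0'$-computable in the limit. The obstruction is that $M$ must commit to a complete $\Lang$-type for each new element immediately and can never retract it.

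\textbf{Commitment step and its justification.} We maintain $M_s$, a finite $\Lang$-structure, together with a partial embedding $f_s$ of an initial segment $\{0,\dots,k_s\}$ of $D$. We also maintain the invariant that $M_s$ is realizable over the image of $f_s$ in $D$: writing $\vec{x}=f_s(\{0,\dots,k_s\})$ and $\vec{w}$ for the remaining elements of $M_s$, we require that $D\vDash\chi_{Q}(\vec{b})$, where $Q$ is the $\Lang$-type of $M_s$ and $\vec{b}$ is the corresponding tuple of $D$. Because $\chi_Q$ is a $\llorc$ formula and types are c.e., this condition is $\Sigma^0_1$ relative to a correct type. It is verifiable in the limit by enumerating the disjuncts of $\chi_Q$ against the c.e.\ type.

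To extend the embedding by $k_s+1$, we search for an element of $M_s$, or a new element, whose type over $\vec{x}$ matches the guessed $\Lang$-type of $k_s+1$ in $D$, keeping $\chi$ true. Property (b) guarantees that the honest extension is realizable.

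\textbf{Handling wrong guesses with $\tau$.} When a guess later changes, either because $t_s$ or $\tau_s$ changed or because an enumerated atom contradicts the committed type, the $M$-elements that were matched to now-defunct $D$-elements cannot be deleted. Instead they become trash and must be reassigned to real elements of $D$ chosen later.

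Concretely, suppose the committed structure has type $q(\vec{x},y,\vec{z})$, where $y$ is the detached element. We pick a finitary $\varphi$ true of $\vec{b}$ with $(q,\varphi)\in\operatorname{dom}\tau$; such a $\varphi$ exists by the first bullet of (c). By the second bullet of (c), some $y'$ in $D$ satisfies $\tau_{q\varphi}(\vec{b},y')$. We then declare that $y$ will eventually be mapped to an element satisfying $\tau_{q\varphi}$. Further extensions $Q\supseteq q$ of $M$ are made only when $\etau_{q\varphi Q}(\vec{b})$ is seen to hold, by (d). Property (e) then ensures that \emph{any} genuine witness $y'$ to $\tau_{q\varphi}$ satisfies $\chi_Q(\vec{b},y')$. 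This is exactly what lets us later attach $y$ to a true element of $D$ without contradiction, even though we never learned which $y'$ was correct.

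Since $\tau_{q\varphi}$ is $\llorc_2$, our belief that it holds of a candidate $y'$ may be wrong finitely often, which in turn forces new trash moves. Every element of $D$ is, in the limit, either matched directly or claimed by a trash element, and to make $f$ onto we interleave requirements ensuring that each element of $M$ is eventually permanently attached.

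\textbf{Main obstacle and verification.} The main obstacle is verifying that these reassignments are \emph{finite-injury}: each $D$-element $n$ should be re-matched only finitely often, and each $M$-element should settle. We would argue by induction on $n$. Once $t\upharpoonright\{0,\dots,n\}$, the relevant $\tau$ values, and the finitely many witnessing disjuncts of the $\chi$, $\etau$, and $\llorc_2$ formulas have stabilized, the $\Lang$-type guesses are correct, and (e) prevents any further forced contradiction. The $\neg\varphi_i$ parts of the $\llorc_2$ formulas are co-c.e., so a wrong belief is refuted only finitely often for a fixed disjunct. We therefore choose witnesses by least index, so that the chosen disjunct only increases to its true limit.

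The map $f=\lim f_s$ is then $0'$-computable, total, injective, type-preserving, and onto, which gives the claimed $0'$-computable isomorphism. Uniformity and relativization follow because the construction uses only the given indices and the approximations of $0'$.
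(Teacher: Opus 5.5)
You have the paper's ingredients: the invariant $D\vDash\chi_Q(\vec b)$ over the matched tuple, a $\tau$-triplet for a detached element of $M$, $\etau$-gating of later extensions, and property (e). The way you schedule them, however, leaves a genuine gap.

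Property (e) yields $\chi_Q(\vec b,y')$ only over the tuple $\vec b$ that was matched \emph{when the triplet was chosen}. It says nothing about $D$-elements matched afterwards. Your plan declares that $y$ will ``eventually'' be attached and meanwhile keeps extending the matching. At attachment time you therefore need $\chi_Q(\vec b,\vec b'',y')$, where $\vec b''$ are the later matches, and this does not follow.

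Here is a concrete failure in the Khisamiev setting. Take $G=\Z$, $x_1\mapsto 1$, let $q$ say that $y$ has no additive relation with $x_1$, and let $\tau_{q\varphi}(x_1,y)\equiv(y=5x_1)$. A back step matching $d=3$ to a new $w$ with $x_1+w=y$ passes your $\chi$-test (witness $y=4$). It also passes the $\etau$-test, with witness $y=5$, $w=4$, since $(1,4,3)$ and $(1,5,4)$ have the same $\Lang$-type. After this step the only $\tau$-witness, $5$, can no longer be attached.

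Symmetrically, you give no argument that the $\etau$-gated back-step search terminates. Property (b) covers only the $\chi$-condition. The paper's argument takes the honest realization $(\vec b,d,\vec c)$ and needs every triplet-bearing element to be \emph{currently matched} to a true $\tau$-witness $d_0$. Only then does that single realization witness all the $\etau_{q_0\varphi Q}(\vec b_0)$ at once.

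The missing idea is the paper's scheduling discipline:
\begin{itemize}
\item A triplet is assigned to the least unmatched $m\in M$ only when a worker is about to match it, with $\vec x$ equal to the \emph{entire} current matched set.
\item $m$ is matched to a $\tau$-witness before anything else happens (Assumption~(C)).
\item If that match is injured (its witness is deleted, or the $\llorc_2$ approximation of $\tau(\vec b,d)$ turns to `no'), every match made after it is erased but the triplet is kept. Re-attachment therefore happens over exactly $\vec b$, where Assumption~(E) together with (e) gives $\chi_Q(\vec b,d')$ for any new witness $d'$.
\item Back steps run only when all triplet-bearing elements are attached.
\end{itemize}

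There are also three smaller issues.
\begin{itemize}
\item A map defined on initial segments $\{0,\dots,k_s\}$ of $D$ cannot accommodate attaching trash to arbitrary $\tau$-witnesses. You need a two-sided finite matching with a priority order, as with the paper's workers.
\item After matches are cancelled, the $\chi$-invariant must still hold for the shorter tuple. The paper secures this by assuming $\chi_{q(\vec x,\vec y,\cdot)}$ contains $\chi_{q(\vec x,\cdot,\cdot)}$ as a conjunct (Remark~\ref{rmk:weakerchi}).
\item A false $\tau$-candidate can look true infinitely often, so the witness search needs a rule such as the paper's penalty scheme. An alternative is to keep the current witness until it is refuted, then take the least unrefuted candidate--disjunct pair. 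Your claim that the chosen disjunct ``only increases'' is not accurate.
\end{itemize}
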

As previously mentioned, we have been able to obtain pre-existing
results from the literature as corollaries of \thmref{main_final},
some examples of which are found in \secref{prevs_applications}.
We will now briefly describe another one of these applications as
a toy example, to give the reader a flavor of the type of argument
that we use.
\begin{definition}
If $T$ is a tree and $\sigma$ is a node, we use the term ``parent
of $\sigma$'' to mean the unique, if it exists, node that has $\sigma$
as a child. We use the term ``$a$-th parent of $\sigma$'' to mean,
if it exists, the parent of the parent of ... (repeated $a$ times)
of the parent of $\sigma$. In particular, the $0$-th parent of $\sigma$
is $\sigma$ itself.
\end{definition}
\begin{prop*}[{\ref{prop:trees}, {\cites[Proposition 3.8]{strong_jump_inversion}}}]
\global\long\def\leaf{\mathop{\mathrm{leaf}}}%
\global\long\def\Shat#1#2{S_{#1\hat{\phantom{i}}#2}}%
Let $T$ be an infinite tree in $\omega^{<\omega}$ (i.e.\ a nonempty
subset of $\omega^{<\omega}$ closed under prefixes) such that every
node of $T$ is either a leaf or admits infinitely many children,
of which finitely many are leaves. Define on $T$ the predicates:
\begin{itemize}
\item $\leaf(x)$, meaning $x$ has no children,
\item $\Shat ab(x,y)$, meaning that $x$ admits an $a$-th parent, and
$y$ admits a $b$-th parent, and that these two are the same,
\item $R_{a}(x)$, meaning that $x$ admits the root of the tree as an $a$-th
parent,
\end{itemize}
which have as particular cases the predicates $S(x,y)\equiv\Shat 01(x,y)$,
meaning that $y$ is a child of $x$, and $R_{0}(x)$, meaning that
$x$ is the root.

Then, if the structure $(T,\leaf,S,R_{0})$ admits a $0'$-computable
copy, the structure $(T,S,R_{0})$ admits a computable copy.

\end{prop*}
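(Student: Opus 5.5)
We will derive the proposition from \thmref{main_final}. Take $\Lang=\{S,\neg S,R_{0},\neg R_{0},=,\neq\}$. Take $\Lang'$ to be $\Lang$ together with the positive predicates $\leaf$, $\neg\leaf$, all $\Shat ab$ and all $R_{a}$.

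\textbf{Step 1: a $0'$-computable c.e.-typed copy.} Starting from a $0'$-computable copy of $(T,\leaf,S,R_{0})$, every predicate of $\Lang'$ is $0'$-decidable. Indeed, $0'$ decides $S$, and the parent of a non-root $x$ is found by searching for the unique $y$ with $S(y,x)$, which always exists. So $a$-th parents, $\Shat ab$ and $R_{a}$ are $0'$-computable. On a finite set $\vec n$, $0'$ can therefore decide the whole (finite) atomic type and output a canonical c.e.\ index for it. Compatibility under inclusion is automatic. This gives a $0'$-computable c.e.-typed copy $D$.

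\textbf{Step 2: quantifier elimination $\chi_{q}$.} The language $\Lang$ is finite, closed under negation and contains $\neq$, so (a) holds. For (b), note that a quantifier-free $\Lang$-type $q(\vec x,\vec y)$ prescribes a finite configuration: which new $\vec y$ are children or parents of which $\vec x$ and of each other, which is the root, and which are distinct. The existence of such $\vec y$ depends only on the following data about $\vec x$:
\begin{itemize}
\item the relative tree positions of the $\vec x$, encoded by which $\Shat ab$ and $R_{a}$ hold among them;
\item which $x_{i}$ are non-leaves, since a non-leaf has infinitely many children, so any finite number of new distinct children can be found.
\end{itemize}
Each of these conditions is positive in $\Lang'$. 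Taking the disjunction over all finite positive descriptions of $\vec x$ that make the configuration realizable gives a $\llorc$ formula computable from $q$. Some of these descriptions state that distinctness fails for some pair; these must be handled too, so the disjunction ranges over both equality patterns.

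\textbf{Step 3: the trash predicate.} Define $\tau_{q\varphi}(\vec x,y)$ to say three things:
\begin{itemize}
\item $y$ bears to $\vec x$ exactly the $\Lang$-relations prescribed by $q$ (a finitary positive conjunction of the required $S$, $R_{0}$ facts);
\item $\neg\leaf(y)$ holds;
\item $y$ bears no further positive relations, that is, $\neg$ of a $\llorc$ disjunction of the forbidden $S$/$=$/$R_{0}$ facts.
\end{itemize}
This is a $\llorc_{2}$ formula. Here $\varphi$ can be taken to be the finitary positive description of $\vec x$ witnessing $\chi_{q}$, and $\tau$ can be chosen $0'$-computably.

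Existence of $y$ in (c) uses the hypothesis on $T$: every non-leaf has infinitely many children, only finitely many of them leaves. Hence whenever $\vec x$ admits a $y$ of type $q$, it admits one that is additionally a non-leaf. If $y$ must be a parent of some $x_{i}$, it is automatically a non-leaf. If $y$ must be the root, it is a non-leaf because $T$ is infinite.

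For (d), put $\etau_{q\varphi Q}(\vec x)=\chi_{Q'}(\vec x)$, where $Q'$ is $Q$ with $y$ additionally required to be a non-leaf, expressed as in Step 2.

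\textbf{Step 4 (main obstacle): condition (e).} We must show that every non-leaf $y$ with the $q$-prescribed relations to $\vec x$ can be extended to a realization of any $\Lang$-type $Q$ extending $q$, provided one such non-leaf $y$ can. The point is that $\Lang$ cannot see leaves. Below a non-leaf there are infinitely many non-leaf children, so the subtree below $y$ contains every finite $S$-configuration. Moreover, any two such $y$ occupy the same position relative to $\vec x$, namely the same ancestor and the same depth. Thus we can transport the witnesses $\vec w$ lying below the first choice of $y$ to witnesses lying below the second. Witnesses not below $y$ are unaffected, since the two choices of $y$ have identical relations to everything outside their own subtrees.

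The delicate part, which we expect to take the most care, is checking that this transport respects the $\neq$ and $\neg S$ clauses. We will also need to check that $\varphi$ records enough about $\vec x$, namely depth via $R_{a}$ and common ancestors via $\Shat ab$, for $\chi$, $\tau$ and $\etau$ to be honestly computable as described. Once (a)--(e) are verified, \thmref{main_final} yields a computable copy of $D\upto\Lang$, and hence of $(T,S,R_{0})$.
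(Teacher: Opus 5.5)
There are two genuine gaps: one in your choice of $\Lang'$, one in your choice of $\tau$.

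\emph{The signature.} Your $\Lang'$ contains only the positive predicates $\Shat{a}{b}$ and $R_a$. With that signature, condition (b) of the QETP is false. In $T=\omega^{<\omega}$, compare the siblings $(\langle0,0\rangle,\langle0,1\rangle)$ with the cousins $(\langle0,0\rangle,\langle1,0\rangle)$. Every atomic fact of your $\Lang'$ that holds of the cousins also holds of the siblings; the siblings additionally satisfy $\Shat{1}{1}$. Hence every $\llorc$ formula true of the cousins is true of the siblings. Now let $q(x_1,x_2,y_1,y_2)$ be the $\Lang$-type of $(\langle0,0\rangle,\langle1,0\rangle,\langle0\rangle,\langle1\rangle)$. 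Then $\exists\vec y\,q$ holds of the cousins and fails of the siblings, so no $\llorc$ formula $\chi_q$ exists. Relative position in the tree is not positive information in your signature. The paper adds $\neg\Shat{a}{b}$ (and $\neg R_a$) to $\Lang'$. Your Step 1 survives this change, because $0'$ computes the depths and pairwise meets of a tuple, and these determine its whole type.

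\emph{The trash formula.} Your $\tau$ prescribes only the $\Lang$-relations between $y$ and $\vec x$: adjacency, rootness and equality. These say nothing about depth or ancestry, so the claim in Step~4 that all $\tau$-witnesses occupy the same position is false. Condition (e) fails already for $Q=q$. If $q$ contains $S(x_1,z)\land S(z,y)$, then any non-leaf $y$ not adjacent to $x_1$ satisfies your $\tau$, and $\etau_{q\varphi q}(\vec x)$ holds, but $\chi_q(\vec x,y)$ fails.

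The paper handles this in two cases. When $q$ ties $y$ to some $x_a$, it puts $\chi_q(\vec x,y)$ into $\tau$. Otherwise it fixes the depth $N$ of $y$ and requires that $y$ share no ancestor other than the root with any $x_a$; there $\etau$ is $\chi_Q(\vec x)$ gated by a syntactic compatibility check.

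Be aware that even $\chi_q(\vec x,y)\land\neg\leaf(y)$ does not give you Step~4. Take $x_1$ the root, $x_2=\langle0,0,0\rangle$, $q$ the type of $(x_1,x_2,\langle0,1\rangle,\langle0\rangle)$, and $Q$ extending $q$ by $w=\langle0,0\rangle$. Then $y=\langle1,1\rangle$ satisfies that $\tau$ and $\chi_Q(\vec x)$ holds, but $\chi_Q(\vec x,y)$ fails.

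What your transport argument actually needs is a $\tau$ that fixes the full position of $y$:
\begin{itemize}
\item $y$ is a non-leaf;
\item its depth is specified;
\item its exact meet with every $x_b$ is specified, expressed with $R_a$, $\Shat{a}{b}$ and $\neg\Shat{a}{b}$, and chosen as close to the root as $q$ permits.
\end{itemize}
Here $\varphi$ should record the configuration of $\vec x$. Then $\etau_{q\varphi Q}$ should be $\chi_Q(\vec x)$ conjoined with a finite check that the demands $Q$ makes on $y$ are compatible with that position. With this in place, (e) follows by swapping finite configurations between the fresh subtrees. This uses that every non-leaf has infinitely many non-leaf children.
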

\begin{proof}
We will only briefly sketch the proof. A more detailed proof can be
found in Section \ref{subsec:trees}.

First, from a $0'$-computable copy of $(T,\leaf,S,R_{0})$ we uniformly
compute a $0'$-computable c.e.-typed copy of $(T,\neg\leaf,\{\Shat ab,\neg\Shat ab\}_{a,b\in\N},\{R_{a},\neg R_{a}\}_{a\in\N})$,
and thus it suffices to show that the structure $T$ satisfies the
QETP, over the signatures
\begin{itemize}
\item $\Lang=(S,\neg S,R_{0},\neg R_{0},=,\neq)$, and
\item $\Lang'$ equals $\Lang$, plus $\neg\leaf$ and all predicates $\Shat ab$,
$\neg\Shat ab$, $R_{a}$, and $\neg R_{a}$.
\end{itemize}
To this effect, we need to construct the functions $\chi$, $\tau$,
and $\etau$.
\begin{itemize}
\item To construct $\chi$: This is tantamount to proving a quantifier-elimination
result for this type of structure. We relegate the (standard for quantifier-elimination
type results) details to Section \ref{subsec:trees}.
\item To construct $\tau$: In our construction, the domain of $\tau$ is
the set of pairs $(q,\top)$. In other words. $\varphi(\vec{x})$
will always be $\top$ here.

Suppose that we are given the quantifier-free $\Lang$-type $q(\vec{x},y,\vec{z})$.
Our goal in defining $\tau$ is to define what it means for an element
$y$ to be ``trash'' or ``generic'' relative to the remaining
data. In this, there are two cases:
\begin{itemize}
\item If $q$ already prescribes that $y$ is related to some $x_{i}$ (e.g.\ it
includes the data $S(x_{1},y)$, or $S(x_{1},z_{1})\land S(z_{1},y)$),
the position of $y$ in the tree relative to $\vec{x}$ is `forced'.
In this case, we demand that $y$ is not a leaf. Thus, in this case,
$\tau_{q\top}(\vec{x},y)\equiv\chi_{q}(\vec{x},y)\land\neg\leaf(y)$.
\item If $q$ does not prescribe the position of $y$ relative to any $x_{i}$,
we would like to set $y$ to be an arbitrary child of the root. This
will not necessarily work, because perhaps $q$ includes enough data
to say that $y$ has some number of direct predecessors. Thus, we
let $N$ be the least number of predecessors that $q$ guarantees
$y$ has, and set $\tau_{q\top}(\vec{x},y)$ to mean ``$y$ is not
a leaf, has exactly $N$ predecessors, and no common ancestors with
any $x_{i}$ other than the root''. Note that we don't have any predicate
$Q(x_{i},y)$ representing that two elements only have the root as
common ancestor, but this can be expressed as a $\llorc$ sentence
in $\Lang'$ as: for some $a,b\in\N$, $x_{i}$ has depth $a$, $y$
has depth $b$, and $\neg\Shat{a-1}{b-1}(x,y)$.
\end{itemize}
\item To construct $\etau$: Given a quantifier-free $\Lang$-type $q(\vec{x},y,\vec{z})$
and another, $Q(\vec{x},y,\vec{z},\vec{w})$, extending it, we wish
to determine, in terms of $\vec{x}$, whether there exists $y$ satisfying
$\chi_{Q}(\vec{x},y)$ and $\tau_{q\top}(\vec{x},y)$. This consists
of (metatheoretically) inspecting the demands that are made of $y$
by $\chi_{Q}$ and by $\tau_{q\top}$ and verifying that none of them
interfere with each other. If none do, set $\etau_{q\top Q}(\vec{x})\equiv\chi_{Q}(\vec{x})$;
otherwise, set $\etau_{q\top Q}(\vec{x})\equiv\bot$.
\end{itemize}
This concludes the construction. A tedious model-theoretic argument
(which, of course, requires one to have done the details) will show
that these formulas satisfy all the conditions of the QETP, and thus
\thmref{main_final} applies, whence we obtain a computable copy of
the structure $(T,S,R_{0})$. In lieu of the details, we provide an
explanation for why we might expect these conditions to hold.

Properties \ref{enu:qetp0}, \ref{enu:qetp1}, and \ref{enu:qetpetau}
all hold by deliberate construction. Property \ref{enu:qetp2} requires,
in this context, that if $\vec{x}$ satisfies $\chi_{q}(\vec{x})$
then there exists $y$ such that $\tau_{q\top}(\vec{x},y)$, and indeed
our $\tau$ only makes ``generic'' demands, such as ``$y$ is not
a leaf'' or ``$y$ has no common ancestor with $\vec{x}$ except
for the root'', which can always be satisfied by infinitely many
elements. Finally, Property \ref{enu:qetp3} is saying, in words:
Given $\vec{x}$, if there is some $y$ that satisfies $\chi_{Q}(\vec{x},y)\land\tau_{q\top}(\vec{x},y)$,
then any $y$ that satisfies $\tau_{q\top}(\vec{x},y)$ will do so.
This holds, because the only thing that might distinguish any two
such $y$ is the pattern of leaves succeeding them (e.g.\ perhaps
one such $y$ has two leaves as a successor, while another $y$ has
infinitely many), but $\chi_{Q}(\vec{x},y)$ cannot demand anything
on the number of leaves succeeding $y$ (or leaves succeeding nodes
of $y$, etc.) -- although it \emph{can }demand that $y$ is not
a leaf, which is why we added $\neg\leaf(y)$ to the definition of
$\tau$ -- and thus any two $y$ that satisfy $\tau_{q\top}(\vec{x},y)$
will be indistinguishable from the perspective of $\chi_{Q}(\vec{x},y)$.
\end{proof}

\begin{remark}
We would like to highlight two points about \thmref{main_final}.
First, we note that it reverses the usual way in which jump inversion
constructions go: Generally, one starts with the $0'$-computable
structure, builds a computable structure out of it, and proves that
this computable structure is isomorphic to the original. On the other
hand, applications of \thmref{main_final} start by constructing a
$0'$-computable c.e.-typed structure out of it, proving that this
structure is isomorphic to the original, and only afterwards constructing
the computable copy.

Another interesting thing about \thmref{main_final} is that, in all
applications we've found, it can be used to black-box all mentions
of injury from a proof. This does not necessarily make the proof \emph{easier};
the verifications necessary to apply \thmref{main_final} are often
more burdensome, if not necessarily more difficult, than a standard
injury proof would be. Regardless, we believe that \thmref{main_final}
encapsulates a common theme among many different finite injury proofs
of jump inversion theorems.
\end{remark}

\section*{Acknowledgements}

I would like to thank Denis Hirschfeldt, Miles Kretschmer, and Patrick
Lutz for their valuable feedback, help, and advice, without which
this work would not have been possible.

\section{A General Jump Inversion Theorem}

As explained in the introduction, there are two main concepts that
need to be introduced before we may state and prove the main theorem
of this paper, \thmref{main_final}. We will now dedicate some time
to defining, motivating, and explaining these concepts.

\subsection{\protect\label{subsec:cetypedstructures}C.e.-typed Structures}

In computable structure theory, the most standard notion of what is
meant by a computable structure over the finite/computable signature
$\Lang$ is: A computable set $A$, with an interpretation for the
symbols from $\Lang$, such that the atomic diagram $D(A)=\{$quantifier-free
formulas with parameters from $A$ that hold true in $A$$\}$ is
computable. This admits an obvious relativization to an arbitrary
oracle $X$, which induces the standard meaning of ``$X$-computable
structure''. However, we've found this notion to be poorly-behaved
in some respects, especially when it comes to the interplay between
infinite signatures (or function symbols) and limit-computability.

To be more precise, let us consider the case where the signature $\Lang$
is finite and relational, let $A$ be a $0'$-computable structure.
Then, given a tuple of elements $\vec{a}$ from $A$, we have, at
each instant in time, a guess for what the quantifier-free $\Lang$-type
of $\vec{a}$ is, and this guess will change a finite number of times,
because the guess for each individual relation (of which there are
finitely many) will change a finite number of times. However, if the
signature $\Lang$ is finite, the matter becomes more difficult: We
can no longer be sure that, eventually, we'll have the right guess
for the type of $\vec{a}$, only that \emph{for any particular predicate},
eventually our guess for that predicate will be correct. This can
pose an issue when trying to perform a finite-injury construction:
Suppose, for example, that we wish to perform task $A$ if some element
$x$ satisfies one of countably many predicates $P_{0}(x)$, $P_{1}(x)$,
etc., and we wish to perform task $B$ instead if it does not. Then,
in a finite injury construction we would perform task $B$ until (unless)
we saw $x$ satisfies one of the predicates $P_{i}$, in which case
we would undo task $B$ and perform task $A$ instead. However, if
all we have are \emph{guesses} for $P_{i}$, we can no longer guarantee
finite injury: It could happen that $x$ satisfies none of the $P_{i}$,
but that at arbitrarily large times our guess for some $P_{N}$ is
that $P_{N}(x)$ holds. We could only guarantee finite injury if there
was a time past which our guesses for \emph{all} the $P_{i}$ remain
the same. This, we claim, makes $0'$-computable structures over infinite
signatures poorly-behaved, and motivates the definition of what we
might call \emph{$0'$-computable computably typed structure}, as
one for which there is a $0'$-computable function which, given a
tuple of elements, outputs a \emph{computable index} for their quantifier-free
$\Lang$-type.

This already motivates us half-way to the main definition of this
section, but we generalize a bit further. There are some scenarios
in which demanding a computable index is far too much: Either because
the types of the elements of a structure may be too computably complex
to represent as a computable index (see e.g.\ \thmref{khisamiev},
which regards abelian groups with divisibility predicates -- even
in a computable group, the set of integers that divide a given element
may be complex enough to encode $0'$), or even if the types are themselves
simple, $0'$ may be unable to determine a computable type in finite
time (see e.g.\ \thmref{JIlinear-1}, which regards linear orders
with ``$n$-th successor of'' predicates -- if $0'$ could compute
a computable index for the set of $n$ such that $x$ and $y$ are
$n$-th successors, then it could also determine whether $x$ and
$y$ are a finite distance apart (this is called the block relation),
but one can show that $0''$ could be encoded into the block relation
of a linear order, so $0'$ must be unable to compute it in general).
However, we've found in both of these cases that a c.e.\ index provides
a workable middle ground, both in the sense that it's a reasonable
demand to make of our structures, and in the sense that it's sufficient
to prove results of interest.

Of course, the c.e.\ index for the quantifier-free type of a tuple
yields exactly the same information as a computable index -- simply
wait until either $\varphi(\vec{x})$ or $\neg\varphi(\vec{x})$ is
enumerated into the type to determine which holds. Thus, in order
to make any gain from demanding c.e.\ instead of computable, we need
to take care around negations.

\begin{definition}
\label{def:cetypedpresentation}Let $\Lang'$ be a computable relational
signature.
\global\long\def\tp{\mathrm{tp}}%
 A\emph{ $0'$-computable c.e.-typed structure} $D$ consists of a
$0'$-computable function $t$, whose domain is the finite power-set
of an initial segment of $\N$ (this initial segment we call the \emph{domain}
of $D$), which takes as input the strong index of a finite set of
natural numbers $\vec{n}=\{n_{0}<n_{1}<\dots<n_{k-1}\}$ and outputs
the c.e.\ index $t(\vec{n})$ for an atomic\footnote{See Definitions \ref{def:atomicformula} and \ref{def:atomictype}
in the introduction for the distinction between atomic type and quantifier-free
type, which is important in this context.} $\Lang'$-type $\tp(\vec{n})$ in the variables $(x_{n_{0}},\dots,x_{n_{k-1}})$,
with the compatibility condition that if $\vec{n}\subseteq\vec{m}$
then $\tp(\vec{n})\subseteq\tp(\vec{m})$.
\end{definition}
\begin{remark}
As a trivial but noteworthy edge case in \defref{cetypedpresentation},
by considering the empty set, we also have access to a c.e.\ index
for the atomic $\Lang'$-type of the empty tuple, which corresponds
to an enumeration of the true zero-ary $\Lang'$-predicates in $D$.
\end{remark}
\begin{remark}
\defref{cetypedpresentation} is not symmetric with regard to negation
of predicates. It can make a stark difference whether our language
includes, for every predicate $P$, its negation $\neg P$ or not.
\end{remark}

\subsection{\protect\label{subsec:qetp}The QETP}

\begin{definition}
\label{def:comp_inf_lang}We shall work in the Computable Infinitary
Language, as described in \cites[Chapter III]{montalban_cst2}. We
will only work with a very small fragment of this language, which
we now describe.

Given a relational signature $\Lang'$, we define the set of \emph{positive
finitary conjunctive formulas} as the set of formulas of the type
$\varphi(\vec{x})\equiv P_{1}\land\dots\land P_{n}$, where each $P_{j}$
denotes a relation in $\Lang'$ being applied to a tuple of variables
from $\vec{x}$, in any order, with the possibility of repeated or
omitted variables. We also allow for $P_{j}$ to denote $\top$, the
``true'' predicate, and $\bot$, the ``false'' predicate.

We define the set of\emph{ positive computable quantifier-free disjunctive
formulas}, denoted by $\llorc$, as the set of formulas in finitely
many variables of the type
\[
\varphi(\vec{x})\equiv\llor_{i<\omega}\varphi_{i}(\vec{x}),
\]
where $\{\varphi_{i}\}_{i<\omega}$ is a computable sequence of positive
finitary conjunctive formulas.

Finally, we define a third set of formulas, denoted by $\llorc_{2}$,
as those formulas in finitely many variables of the type
\[
\varphi(\vec{x})\equiv\llor_{i<\omega}\left(\psi_{i}(\vec{x})\land\neg\varphi_{i}(\vec{x})\right),
\]
where $\{\psi_{i}\}_{i<\omega}$ and $\{\varphi_{i}\}_{i<\omega}$
are a computable sequences of $\llorc$ formulas.
\end{definition}
\begin{remark}
\label{rmk:llorc}The main properties of the $\llorc$ formulas are
the following:
\begin{itemize}
\item There is a canonical way to associate indices to $\llorc$ formulas,
so that a computer program may manipulate them, and
\item Given a c.e.\ index for the atomic $\Lang'$-type of a tuple of elements
$\vec{b}$ in some structure, and a $\llorc$ formula $\varphi(\vec{x})$
satisfied by $\vec{b}$, we can computably confirm in finite time
that the tuple $\vec{b}$ satisfies the formula $\varphi$.
\end{itemize}
\end{remark}
\begin{remark}
\label{rmk:llorc2}The main properties of the $\llorc_{2}$ formulas
are the following:
\begin{itemize}
\item There is a canonical way to associate indices to $\llorc_{2}$ formulas,
\item There is a canonical way to turn the index for a $\llorc$ formula
into the index for a logically equivalent $\llorc_{2}$ formula, and
\item Given a tuple of elements of a structure and a c.e.\ index for their
type, it is $0'$-c.e.\ to determine whether a $\llorc_{2}$ formula
holds of the elements.
\end{itemize}
Let us briefly elaborate on that last bullet point. If $\varphi(\vec{x})\equiv\llor_{i}\left(\psi_{i}(\vec{x})\land\neg\varphi_{i}(\vec{x})\right)$
is a $\llorc_{2}$ formula, and $\vec{b}$ is a tuple of elements
of a structure for which we have a c.e.\ index for the atomic type
of $\vec{b}$, we can computably create a sequence of guesses for
whether $\varphi(\vec{b})$ holds: First, continuously guess `no',
whilst dovetailing over the values of $i$ until we find a value for
which $\psi_{i}(\vec{b})$ holds. If such a value is found, continuously
guess `yes' until and unless you discover that $\varphi_{i}(\vec{b})$
holds. At that point, guess `no' once, and go back to finding a
(new) value $j$ for which $\psi_{j}(\vec{b})$ holds, and so on.
This sequence of guesses will have the following behavior:
\begin{itemize}
\item If $\varphi(\vec{b})$ holds, we will guess `yes' all but finitely
many times, and
\item If $\varphi(\vec{b})$ does not hold, we will guess `no' infinitely
many times.
\end{itemize}
\end{remark}
\begin{remark}
Compare the properties in \rmkref{llorc} to the following properties
of a related class of computable infinitary formulas. The $\Sigma_{1}^{c}$
formulas are those formulas in the Computable Infinitary Language
of the form:
\[
\llor_{i<\omega}\exists_{\vec{y}_{i}}\varphi_{i}(\vec{x},\vec{y}_{i}),
\]
where the sequence of formulas $\{\exists_{\vec{y}_{i}}\varphi_{i}(\vec{x},\vec{y})\}_{i\in\N}$
is computable, and each $\varphi_{i}$ is a finite Boolean combination
of predicates applied to variables in the tuple $(\vec{x},\vec{y})$.
\begin{itemize}
\item The $\Sigma_{1}^{c}$ formulas also admit a canonical way to associate
indices, and
\item Given a tuple $\vec{b}$ of elements in a computable structure $D$
that satisfy the $\Sigma_{1}^{c}$ formula $\varphi(\vec{x})$, we
can confirm in finite time that the tuple $\vec{b}$ satisfies the
formula $\varphi$.
\end{itemize}
Despite the similarities, the $\llorc$ formulas shall be more useful
in our treatment, because to check the truth of a $\Sigma_{1}^{c}$
formula, we need access to the entire model (so that we may brute-force
the tuples $\vec{y}_{i}$), whilst we shall be working with $0'$-computable
c.e.-typed structures, which may contain ``phantom elements'' that
could otherwise lead to false positives. Moreover, since we only have
a c.e.\ index for the positive information about elements, we cannot
computably check arbitrary finite Boolean combinations of predicates
-- hence why we restrict to positive formulas.
\end{remark}
\begin{remark}
In the sequence, we shall have two nested signatures, $\Lang\subseteq\Lang'$.
We shall follow the following conventions:
\begin{itemize}
\item Unless indicated otherwise, all formulas are assumed to be quantifier-free,
\item All named $\Lang$-formulas will be quantifier-free finitary first-order
formulas, and will be denoted by lower-case Roman letters,
\item All named $\Lang'$-formulas will be $\llorc$ and will be denoted
by lower-case Greek letters, except for the formulas named $\tau$,
which will be $\llorc_{2}$. Conversely, all $\llorc$ or $\llorc_{2}$
formulas in the sequence are assumed to be over the signature $\Lang'$.
\end{itemize}
\end{remark}
We now present the main definition of this section. It is a rather
technical definition. We recall that we provided some motivation for
this definition in the introduction; see \rmkref{qetpmotivation}.

\begin{definition}[QETP]
\label{def:qetp}Let $\Lang\subseteq\Lang'$ be computable signatures,
and let $D$ be a structure over $\Lang'$. We say that $D$ satisfies
the \emph{Computable Positive Quantifier Elimination and Trash Existence
Property}, abbreviated to \emph{QETP,} if the following properties
hold:
\begin{enumerate}[label=(\alph*)]
\item \label{enu:qetp0}$\Lang$ is finite, closed under negation, and
includes the predicate $\neq$,
\item \label{enu:qetp1}For every quantifier-free $\Lang$-formula $q(\vec{x},\vec{y})$,
there is a $\llorc$\emph{ }$\Lang'$-formula $\chi_{q}(\vec{x})$,
computable from $q$, such that
\[
D\vDash\forall_{\vec{x}}\left[\chi_{q}(\vec{x})\leftrightarrow\exists_{\vec{y}}q(\vec{x},\vec{y})\right],
\]
\item \label{enu:qetp2}There is a $0'$-computable \emph{partial} function
denoted $\tau$, which takes as input a pair $(q(\vec{x},y,\vec{z}),\varphi(\vec{x}))$,
where $q(\vec{x},y,\vec{z})$ is a quantifier-free $\Lang$-type\footnote{Note the lack of ``vector arrow'' on the variable $y$. This denotes
a \emph{single} variable, not a tuple of them.} and $\varphi(\vec{x})$ is a \emph{finitary} $\Lang'$-formula, whose
output is a $\llorc_{2}$ $\Lang'$-formula denoted $\tau_{q\varphi}(\vec{x},y)$
that satisfies the following two properties:
\begin{itemize}
\item For every tuple $(\vec{x},y,\vec{z})$ of elements in $D$, of $\Lang$-type
$q(\vec{x},y,\vec{z})$, there is a formula $\varphi(\vec{x})$ satisfied
by $\vec{x}$ such that $(q,\varphi)$ is in the domain of $\tau$.
Equivalently, if $\vec{b}$ is a tuple of elements in $D$ such that
$D\vDash\chi_{q}(\vec{b})$, there is $\varphi(\vec{x})$ such that
$(q,\varphi)$ is in the domain of $\tau$ and $D\vDash\varphi(\vec{b})$.
Moreover,
\item The following formula holds in $D$:
\[
D\vDash\forall_{\vec{x}}\left[\chi_{q}(\vec{x})\land\varphi(\vec{x})\rightarrow\exists_{y}\tau_{q\varphi}(\vec{x},y)\right].
\]
\end{itemize}
\item \label{enu:qetpetau}Given a pair $(q(\vec{x},y,\vec{z}),\varphi(\vec{x}))$
in the domain of $\tau$ as above, together with a quantifier-free
$\Lang$-type $Q(\vec{x},y,\vec{z},\vec{w})$ extending $q(\vec{x},y,\vec{z})$,
there is a $\llorc$ $\Lang'$-formula $\etau_{q\varphi Q}(\vec{x})$,
computable from the triple $(q,\varphi,Q)$, such that
\[
D\vDash\forall_{\vec{x}}\left[\exists_{y}\left[\chi_{Q}(\vec{x},y)\land\tau_{q\varphi}(\vec{x},y)\right]\leftrightarrow\etau_{q\varphi Q}(\vec{x})\right].
\]
The function $(q,\varphi,Q)\mapsto\etau_{q\varphi Q}$ may be defined
in some cases even if the pair $(q,\varphi)$ is not in the domain
of $\tau$. In this scenario, we do not make any requirement of $\etau_{q\varphi Q}$.

Note: As a corollary of \ref{enu:qetp2}, we obtain $D\vDash\forall_{\vec{x}}\left[\chi_{q}(\vec{x})\land\varphi(\vec{x})\rightarrow\etau_{q\varphi q}(\vec{x})\right]$
for $(q,\varphi)$ in the domain of $\tau$.
\item \label{enu:qetp3}Finally, for all $q$, $\varphi$, $Q$, we require
\[
D\vDash\forall_{\vec{x},y}\left[\etau_{q\varphi Q}(\vec{x})\land\tau_{q\varphi}(\vec{x},y)\rightarrow\chi_{Q}(\vec{x},y)\right].
\]
\end{enumerate}
\end{definition}
\begin{remark}
\label{rmk:tauwork}We will work with Item \ref{enu:qetp2} of the
QETP in the following manner: We envision a computable process that
enumerates triplets $(q,\varphi,\tau)$, and sometimes deletes them,
in such a manner that, if $\tau_{q\varphi}$ is well-defined, the
triple $(q,\varphi,\tau_{q\varphi})$ will eventually be enumerated
and never removed, and every triplet that is not of the form $(q,\varphi,\tau_{q\varphi})$
for $(q,\varphi)$ in the domain of $\tau$ will eventually be removed.
\end{remark}
\begin{remark}
As a consequence of Item \ref{enu:qetp1} of Definition \ref{def:qetp},
if $D$ is a $0'$-computable c.e.-typed structure satisfying the
QETP, we can enumerate the quantifier-free $\Lang$-types that are
realized in $D$. This is because, to tell if a type $q(\vec{y})$
is realized in $D$, we inspect the truth value of the zero-ary positive
sentence $\chi_{q}$, which will consist of a positive combination
of zero-ary predicates of $D$. By inspecting the type of the zero-ary
tuple of elements of $D$ we can therefore enumerate the types $q(\vec{y})$
that are realized.

\label{rmk:weakerchi}We make an additional technical assumption on
the formulas $\chi_{q}$, albeit with no loss of generality. Let $q(\vec{x},\vec{y},\vec{z})$
be a quantifier-free $\Lang$ formula. We can assign a $\chi$-formula
to $q$ in (at least) two different ways: By grouping the variables
as $(\vec{x},[\vec{y},\vec{z}])$, or by grouping them as $([\vec{x},\vec{y}],\vec{z})$.
This leads to two different formulas, which, when necessary to distinguish,
we notate respectively as $\chi_{q(\vec{x},\cdot,\cdot)}(\vec{x})$
and $\chi_{q(\vec{x},\vec{y},\cdot)}(\vec{x},\vec{y})$. Now, \textbf{we
assume without loss of generality} that, in such a scenario, $\chi_{q(\vec{x},\vec{y},\cdot)}(\vec{x},\vec{y})$
contains $\chi_{q(\vec{x},\cdot,\cdot)}(\vec{x})$ as a subformula.

The reason why we can make such an assumption is that, given a quantifier-free
$\Lang$-formula $q(\vec{x},\vec{y})$, we may redefine $\bar{\chi}_{q}(\vec{x})$
as
\[
\bar{\chi}_{q}(\vec{x})=\bigwedge_{\vec{x}=(\vec{x}_{1},\vec{x}_{2})}\chi_{q(\vec{x}_{1},\cdot,\cdot)}(\vec{x}_{1}),
\]
where the conjunction ranges over all decompositions of the tuple
$\vec{x}$ into two tuples of variables (not necessarily in order).
Implicitly, we use the fact that finite conjunctions of $\llorc$
formulas are themselves $\llorc$. This new definition of $\bar{\chi}$
still serves as a witness to the QETP, if the previous $\chi$ did,
but has this additional technical benefit that will be necessary in
our proof.
\end{remark}
\begin{remark}
\label{rmk:assumeq}When verifying the QETP for a specific structure,
we must construct a formula $\chi_{q}(\vec{x})$ for any quantifier-free
$\Lang$-formula $q(\vec{x},\vec{y})$. This can be quite burdensome.
Thus, we present a simplification step, common in quantifier-elimination
proofs, which we will henceforth use without comment: When constructing
$\chi_{q}$ we may assume, without loss of generality, that $q(\vec{x},\vec{y})$
is a quantifier-free $\Lang$-type all of whose elements are distinct.
This is because any formula $q(\vec{x},\vec{y})$ can be written as
a disjunction $\lor_{i}\left(q(\vec{x},\vec{y})\land t_{i}(\vec{x},\vec{y})\right)$,
where $t_{i}$ varies over the quantifier-free $\Lang$-types, each
$q(\vec{x},\vec{y})\land t_{i}(\vec{x},\vec{y})$ is logically equivalent
to either $\bot$ or to $t_{i}(\vec{x},\vec{y})$ (call the result
$q_{i}(\vec{x},\vec{y})$), and moreover $\exists_{\vec{y}}\lor_{i}q_{i}(\vec{x},\vec{y})$
is logically equivalent to $\lor_{i}\exists_{\vec{y}}q_{i}(\vec{x},\vec{y})$,
so it suffices to define $\chi$ on each $q_{i}$. Moreover, we may
assume that all variables are distinct, because if $q_{i}$ contains
$y_{a}=y_{b}$ or $y_{a}=x_{b}$ we can simply perform the substitution
and erase the quantification in $y_{a}$, and if $q_{i}$ contains
$x_{a}=x_{b}$, we can get rid of one of these two variables by using
the fact that $\exists_{\vec{y}}q_{i}(\vec{x},\vec{y})$ is logically
equivalent (in this case) to $x_{a}=x_{b}\land\exists_{\vec{y}}\hat{q}_{i}(\vec{x}_{\hat{a}},\vec{y})$,
where $\hat{q}_{i}$ consists of the reduct of $q_{i}$ to the tuple
of variables from which we removed $x_{a}$.
\end{remark}
\begin{remark}
In truth, \rmkref{assumeq} is actually completely unnecessary: By
inspection of the proof of \thmref{main_final}, one will find that
indeed $\chi_{q}$ is only ever calculated when $q$ is a quantifier-free
$\Lang$-type of distinct elements. Nevertheless, we preferred to
phrase the QETP as it is, even if it is stronger than necessary in
appearance (though not in fact), to emphasize the fact that the $\chi$
part of the definition is no more than an effective quantifier-elimination
result.
\end{remark}

\subsection{Main Result}
\begin{thm}
\label{thm:main_final}If $D$ is a $0'$-computable c.e.-typed structure
over the signature $\Lang'\supseteq\Lang$ satisfying the QETP, then
the reduct $D\upto\Lang$ admits a computable copy $M$. There is
a $0'$-computable isomorphism between $D\upto\Lang$ and $M$. This
result is uniform in the index for $D$ and an index for witnesses
that $D$ satisfies the QETP,\footnote{This includes: An index for which predicates in $\Lang$ are the negation
of each other, an index for the computable functions $\chi$ and $\etau$,
and an index for the $0'$-computable function $\tau$.} and relativizes uniformly.
\end{thm}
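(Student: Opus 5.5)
We build $M$ by a finite-injury construction that follows the Shoenfield-limit approximation of $D$. Fix computable approximations $t_s$ to the $0'$-computable type function $t$ of $D$, and $\tau^s$ to the partial $0'$-computable function $\tau$, as in \rmkref{tauwork}. At each stage $s$, $M_s$ is a finite set of elements with a fully decided quantifier-free $\Lang$-diagram; since $\Lang$ is finite and closed under negation, a finite diagram determines the full $\Lang$-type. The diagram of $M=\bigcup_s M_s$ is computable because we never revise it. We also keep a partial map $f_s$ from $M_s$ to $D$, our current guess at the isomorphism, and ensure that $f=\lim_s f_s$ exists and is a bijection. This $f$ is $0'$-computable, which gives the second clause of the theorem.

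The requirements are $R_{2n}$: $n\in\operatorname{dom}(D)$ is in the range of $f$, and $R_{2n+1}$: the $n$-th element of $M$ is mapped. They are ordered by priority. Each element $m$ of $M_s$ is either \emph{real}, meaning $f_s(m)$ is an element of $D$ assigned on behalf of some requirement, or \emph{trash}. A trash element $m$ is recorded together with a triple $(q,\varphi,\tau_{q\varphi})$ currently believed valid, a tuple $\vec{x}$ of real elements it depends on, and a commitment that its intended image $y$ satisfies $\tau_{q\varphi}(f(\vec{x}),y)$.

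To add a new element of $D$ on top of the current real tuple $\vec{a}$ with image $\vec{b}$, we must make sure the $\Lang$-type we assign is realizable. We check this by waiting until $\chi_Q(\vec{b})$ is confirmed from the c.e. index $t_s$, using \rmkref{llorc}. Because $\chi_Q$ is $\llorc$, a confirmation persists as long as $t_s$ on $\vec{b}$ does not change, and it is genuinely true once $t$ has settled.

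When a guess changes (say $t_s(\vec n)$ changes, or some $\tau^s$ triple is withdrawn), the lower-priority real elements built on the changed data become orphaned. We do not delete them, since $M$'s diagram is frozen. Instead we declare them trash. The key is that the orphaned element $y$, together with the other elements $\vec{z}$ above it, already has a fixed $\Lang$-type $q(\vec{x},y,\vec{z})$ over the surviving real tuple $\vec{x}$. By the first bullet of \ref{enu:qetp2}, there is $\varphi$ with $(q,\varphi)\in\operatorname{dom}(\tau)$ and $\varphi(f(\vec{x}))$. Since $\varphi$ is finitary, once the guesses stabilize, $0'$-approximation eventually finds the correct one. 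By the second bullet, some $y\in D$ with $\tau_{q\varphi}(f(\vec x),y)$ exists, so the trash $y$ can eventually be mapped to a genuine element of $D$. Its dependents $\vec{z}$ are handled recursively, as the new $\vec{w}$-type data attached to it.

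Later extensions over a trash element $y$ add more elements $\vec{w}$, yielding a type $Q\supseteq q$. Here $\chi_Q(\vec{x},y)$ cannot be tested, because $y$'s image is not yet fixed and may be a "fake" trash element. We therefore test $\etau_{q\varphi Q}(f(\vec{x}))$, a $\llorc$ formula in the reliable parameters only (item \ref{enu:qetpetau}). Item \ref{enu:qetp3} then guarantees that whichever $y'$ we finally choose with $\tau_{q\varphi}(f(\vec x),y')$ satisfies $\chi_Q(f(\vec x),y')$. So all structure committed above $y$ can be realized in $D$ over $y'$, and we can then treat $y'$ and its extensions as real, using the equivalence $\chi\leftrightarrow\exists q$ from \ref{enu:qetp1}.

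Choosing the witness $y'$ requires recognizing $\tau_{q\varphi}$, which is $\llorc_2$. By \rmkref{llorc2} its truth is only $\Sigma_2$-guessable: a true instance is eventually guessed `yes' forever, a false one is guessed `no' infinitely often. We search over candidate $y'$ in $D$ by standard dovetailing, where candidate $j$ gets attention only while guessed `yes'. This selects a true witness after finitely many changes. The grouping assumption in \rmkref{weakerchi} is what keeps the stored $\chi$ confirmations for sub-tuples consistent when a trash element is promoted.

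The verification is a standard induction on priority. Each requirement acts finitely often, because the relevant finitely many values of $t$ and $\tau$ settle and the $\llorc$ and $\llorc_2$ searches terminate. Every element of $M$ is eventually either real or promoted trash with a permanent image, and $f$ is a bijection preserving $\Lang$-types.

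I expect the main obstacle to be this bookkeeping of nested trash. A trash element may itself have been built above earlier trash, and its $\tau$-triple may be wrong. Showing that the injury stays finite, and that item \ref{enu:qetp3} applies with the correct $\vec{x}$ (only the eventually-real parameters), is the delicate part. Uniformity and relativization follow by inspecting the construction, since it uses only the indices listed in the footnote.
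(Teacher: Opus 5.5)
Your outline has the same architecture as the paper's proof: a finite-injury back-and-forth in which orphaned elements of $M$ are re-matched through $\tau$-witnesses, new elements are admitted only after a $\chi$-check and an $\etau$-check, and item (e) certifies any eventual witness. The gap is that the part you defer as ``delicate'' is where the proof actually lives. The missing idea is the discipline the paper imposes on triples.

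\begin{itemize}
\item Triples are never attached to several orphans at once. Nor are they attached to a tuple of ``eventually real'' parameters, which the construction cannot know.
\item Only the \emph{first} unmatched element $m$ of $M$ receives a triple. Its $\vec x$ is \emph{all} currently matched elements, tentative ones included, and $\vec z$ is all remaining elements. Then $m$ is matched immediately (the paper's Assumption (C)).
\item The triple stays attached after $m$ is matched. Every subsequent forth step must pass $\etau_{q\varphi Q}(\vec b_0)$ for every element carrying a live triple (Assumption (E)).
\item Injury is a stack. Whenever a match, a triple, or a $\tau$-guess is invalidated, every match and triple created afterwards is erased.
\end{itemize}

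This makes ``the correct $\vec x$'' automatic: a live triple's $\vec x$-matches are exactly those that existed when it was assigned. The erasure is also necessary. When $m$'s witness changes, everything matched after $m$ must be released, because item (e) gives $\chi_Q(\vec b,y')$ only with $\vec z,\vec w$ existentially quantified. It does not give compatibility with their current images. So ``treating $y'$ and its extensions as real'' cannot mean keeping those images.

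Several details you do state would also fail if taken literally.

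\begin{itemize}
\item In the forth step, $\etau$ supplements the $\chi$-test rather than replacing it. The paper requires $\chi_{Q(\vec x,z,\cdot)}(\vec b,d)$ with $\vec b$ the images of \emph{all} current matches, tentative ones included. This maintains Assumption (A), which is what makes the first bullet of item (c) applicable to the next orphan.
\item The new element $d$ must be allowed to go to an already existing unmatched element of $M$ when $Q$ contains $z=y_j$. A fresh element may be unrealizable. For example, if an unmatched $y\simeq x_1$ already uses up the only other slot in the finite class of $f(x_1)$, then $d$ has to be matched to $y$.
\item Your $\llorc_{2}$ search needs a more careful rule. If ``attention only while guessed `yes'\,'' means picking the least candidate currently guessed `yes', the choice can oscillate forever among false candidates below the true witness. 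The search has to be organized over (candidate, disjunct) pairs, or by the paper's penalty scheme.
\item Remark~\ref{rmk:weakerchi} is what preserves the $\chi$-invariant when matches are \emph{erased}. Promotion of a trash element is handled by item (e), not by that remark.
\end{itemize}
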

\begin{proof}
We shall build a computable copy of $D\upto\Lang$ via a finite injury
argument. First, we lay down the foundation for the functioning of
our algorithm:
\begin{itemize}
\item We visualize the structure $D$ as being given by a computable process,
outputting elements and c.e.\ indices for the atomic $\Lang'$-type
of the elements output thus far, and sometimes erasing previously
constructed elements. We shall be executing this construction in parallel
in the background of ours.
\item We shall build the $\Lang$-structure $M\cong D\upto\Lang$ via a
computable process that outputs a sequence of elements, and with each
element, the quantifier-free $\Lang$-type of the elements output
thus far.
\item We well-order the elements of $M$ and the elements of $D$ by time
of addition. This includes ``fake'' elements of $D$. When we refer
to ``the first element of $M$ (or $D$) that satisfies such-and-such
property'', we mean the one that was added earliest.
\item We shall imagine an assortment of workers, referred to as ``worker
$i$'' for $i=0,1,2,\dots$, each of which will be responsible for
one step in a back-and-forth construction of an isomorphism between
$D$ and the structure $M$ that we are constructing.
\item We assume that only one worker is active at each time. When worker
$i$ is done with its task, it will activate worker $i+1$. When an
injury occurs (the means by which this may be caused will be discussed
later), we may divert execution back to a prior worker, say worker
$j$, and delete the state of all workers past worker $j$.
\item The internal state of this construction, that is, the information
we shall be keeping track of, consists of the following data (plus
whatever ancillary information is necessary to keep track of the worker
mechanism):
\begin{itemize}
\item The current status of the structures $M$ and $D$,
\item One-to-one matchings between some elements of $M$ and some elements
of $D$, the latter of which may or may not be real,
\item To some elements $m$ of $M$, we match a triplet of formulas $(q(\vec{x},y,\vec{z}),\varphi(\vec{x}),\tau(\vec{x},y))$
that have been output by the process from \rmkref{tauwork}, together
with a decomposition of the elements of $M$ at the time that this
matching was created, into $\vec{x}$, $y=m$, and $\vec{z}$.
\item To each matching of the previous two kinds, we keep track of which
worker created it and when.
\end{itemize}
\item The internal state of this construction may be modified via the following
operations:
\begin{itemize}
\item We may add a new element to $M$, in which case we immediately match
it to an element of $D$,
\item We may create a matching between an element of $M$ and an element
of $D$,
\item We may match a triplet of formulas $(q,\varphi,\tau)$ to an element
$y\in M$, together with the decomposition $M_{\mathrm{current}}=(\vec{x},y,\vec{z})$,
where $\vec{x}$ denotes the elements of $M$ that have a match in
$D$ at this moment (note that in particular we assume $y$ does not
have a match at this moment),
\item In the event of injury, we may remove matches of the two kinds above.
\end{itemize}
\item The construction will be injured if either of the following three
events occur:
\begin{itemize}
\item An element of $D$ which admits a match to an element of $M$ is deleted,
\item There is an element of $M$, with an assigned triplet of formulas
$(q,\varphi,\tau)$, for which the triplet of formulas has been deleted
from the enumeration of the process of \rmkref{tauwork},
\item There is an element of $M$, with an assigned triplet $(q,\varphi,\tau)$
and a matched element $d\in D$, such that our guess about whether
$\tau$ holds of this element has changed to `no'.
\end{itemize}
In either of these cases, all matches created after the ``faulty
match'' will be deleted.
\end{itemize}
Now, we establish some assumptions about the internal state of the
construction, which we will be careful to uphold at every step. Together
with each assumption, we point out the operations that we need to
be careful in exercising -- the remaining operations will always
preserve the given assumption. Moreover, the assumptions will not
be broken with the passage of time unless injury happens, in which
case the injury-handling part of the algorithm will rectify any issues.
\begin{enumerate}[label={Assumption (\Alph*)}, leftmargin=*]
\item \label{enu:asmchi}If $M=(\vec{x},\vec{y})$, where $\vec{x}$ denotes
the elements of $M$ that currently admit matches in $D$, $\vec{b}$
denotes these matches, and $q(\vec{x},\vec{y})$ is the type of the
elements of $M$, we demand that $D\vDash\chi_{q}(\vec{b})$.

We need to be careful to ensure that this is true when we add new
elements to $M$, and when we create matches between elements of $M$
and elements of $D$. We would also need to be careful in the event
that matches are deleted, but this is where \rmkref{weakerchi} becomes
relevant, as it ensures that this is not a concern.
\item \label{enu:asmphi}If $y\in M$ is matched with a triplet of formulas
$(q,\varphi,\tau)$ and decomposition $(\vec{x},y,\vec{z})\subseteq M$,
we demand that $M\vDash q(\vec{x},y,\vec{z})$ and that all elements
of $\vec{x}$ admit matches $\vec{b}$ in $D$, which satisfy $D\vDash\varphi(\vec{b})$.

We need to be careful to ensure that this is true when we assign a
triplet to an element. We don't need to be careful when removing matches
or elements of $D$, because our injury process will, in that case,
unmatch the formulas assigned to $y$.
\item \label{enu:asmutau}At most one unmatched element $m$ of $M$ will
have an assigned triplet at any given time, and in this event, the
variables $\vec{x}$ correspond to the elements of $M$ currently
matched to elements of $D$, and the next action to be taken shall
be to match $m$ to an element of $D$.
\item \label{enu:asmtau}If $y$ is an element of $M$ with assigned triplet
$(q(\vec{x},y,\vec{z}),\varphi(\vec{x}),\tau(\vec{x},y))$ and $y$
admits a match $d$ in $D$, if $\vec{b}$ is the tuple of elements
matched with $\vec{x}$, it must be the case that $D\vDash\tau(\vec{b},d)$
\emph{at the current approximation} (in the sense of \rmkref{llorc2}).

We need to be careful to ensure that this is true when we create a
match between an element of $M$ with an assigned triplet $(q,\varphi,\tau)$
and an element of $D$. We will not need to be careful when we assign
a triplet $(q,\varphi,\tau)$ to an element of $M$, because (it turns
out) we will always match such triplets to unmatched elements. We
also need to be careful if the current approximation (in the sense
of \rmkref{llorc2}) ever changes, which is why `the approximation
of the value of $\tau(\vec{b},d)$ changes' is a cause of injury.
\item \label{enu:asmetau}If $m$ is an element with assigned formulas $(q(\vec{x},y,\vec{z}),\varphi(\vec{x}),\tau(\vec{x},y))$,
with $\vec{b}$ denoting the matches of the $\vec{x}$, and at this
stage $M$ is decomposed as $M=(\vec{x},y,\vec{z},\vec{w})$ with
quantifier-free $\Lang$-type $Q(\vec{x},y,\vec{z},\vec{w})$, it
must be the case that, if the $\vec{b}$ are real elements of $D$,
we have $D\vDash\etau_{q\varphi Q}(\vec{b})$.

We need to be careful to ensure that this is true when new elements
are added to $M$, and when we assign triplets $(q,\varphi,\tau)$
to elements of $M$.
\end{enumerate}
The process starts with the initialization of worker $0$.

Now, let us describe the algorithm that will be followed by worker
$i$. Upon initialization:
\begin{itemize}
\item To begin, find the first element of $M$, if any, that does not have
a match in $D$. If there is none, skip to the end of this bullet
point. If there is one, call it $m$.

If $m$ does not have an assigned formula: Let $(\vec{x},y,\vec{z})$
be the current elements of $M$, with $\vec{x}$ corresponding to
the elements that currently have a match in $D$ -- call them $\vec{b}$
--, $y$ corresponding to $m$, and $\vec{z}$ corresponding to the
remaining elements. Look through the triplets $(q,\varphi,\tau)$
enumerated as per \rmkref{tauwork} until you find such a triplet
that satisfies $M\vDash q(\vec{x},y,\vec{z})$ and $D\vDash\varphi(\vec{b})$.
If all elements of $\vec{b}$ are real, such a triple will be found
in finite time by \enuref{asmchi} in conjunction with Item \ref{enu:qetp2}
of the QETP. If not all are real, injury will occur. Once such a triple
is found, we assign to $m$ the triplet $(q,\varphi,\tau)$ with the
decomposition $(\vec{x},y,\vec{z})$ above.

We need to make sure that \ref{enu:asmphi} and \ref{enu:asmetau}
are preserved.
\begin{itemize}
\item \ref{enu:asmphi} -- By design.
\item \ref{enu:asmetau} -- We need to guarantee that, if the $\vec{b}$
are all real, we have $D\vDash\etau_{q\varphi q}(\vec{b})$. This
is a consequence of the note at the end of Item \ref{enu:qetpetau}
of the QETP.
\end{itemize}
Let $(q,\varphi,\tau)$ be the formula assigned to $m$. The formula
$q(\vec{x},y,\vec{z})$ will not, in general, be the quantifier-free
$\Lang$-type of the current elements of $M$, but it is definitely
the quantifier-free $\Lang$-type of some subset of elements of $M$.
By \ref{enu:asmutau}, the elements corresponding to $\vec{x}$ are
exactly the ones that currently admit matches in $D$. Let $\vec{b}$
denote their matches.

We let $Q(\vec{x},y,\vec{z},\vec{w})$ denote the quantifier-free
type of the current elements of $M$, where $\vec{x}$, $y$, and
$\vec{z}$ are the same as above.

Look through the elements of $D$ until you find an element $d$ satisfying
$D\vDash\tau(\vec{b},d)$ to current approximation (in the sense of
\rmkref{llorc2}).

{\small Technical note: This search must be done in some systematic
way so that, if all $\vec{b}$ are real, we eventually find a }{\small\emph{real}}{\small{}
element $d$ satisfying this formula in such a way that the approximation
nevermore changes. An example of such a systematic way: Let's say
we give the $n$-th element added to $D$, say $d_{n}$, a penalty
of $n$ stages in the time it takes to compute whether $D\vDash\tau(\vec{b},d_{n})$,
and select the $d$ such that $D\vDash\tau(\vec{b},d)$ is verified
(and not disproven until now) in the least amount of stages. This
will be important when proving finite injury.}{\small\par}

We claim that, unless an injury occurs, this search will eventually
terminate. By this we mean: Assume that all $\vec{b}$ are real, and
that the triple $(q,\varphi,\tau)$ will never be removed from the
enumeration from \rmkref{tauwork}. Then, by \enuref{asmchi} it must
be the case that $D\vDash\chi_{Q}(\vec{b})$ and by \enuref{asmetau}
it must be the case that $D\vDash\etau_{q\varphi Q}(\vec{b})$. Thus,
by definition of $\etau$, it must be the case that $D\vDash\exists_{y}\tau_{q}(\vec{b},y)$,
and any witness to this statement will serve as the $d$ above.

Once such a $d$ is found, we match $m$ to $d$.

We need to make sure that \enuref{asmchi} and \enuref{asmtau} hold.
\begin{itemize}
\item \enuref{asmchi} -- If $d$ is real, this is direct by Item \ref{enu:qetp3}
of the QETP.
\item \enuref{asmtau} -- By construction
\end{itemize}
\item Next, find the first element of $D$ -- call it $d$ -- that does
not have a match in $M$. If there is no such element as of right
now, proceed to the next bullet point.

Let $q(\vec{x},\vec{y})$ be the quantifier-free $\Lang$-type of
the elements of $M$, and $\vec{b}$ the matches of the elements of
$M$ that admit matches. Let $r(\vec{x},z)$ be the quantifier-free
$\Lang$-type of the tuple $(\vec{b},d)$.\footnote{Here, we use Item \ref{enu:qetp0} of the QETP to computably obtain
(a strong index for) $r(\vec{x},z)$ from a c.e.\ index for the atomic
$\Lang'$-type of $(\vec{b},d)$. More generally, we use the fact
that Item \ref{enu:qetp0} of the QETP allows us to computably obtain
a strong index for the $\Lang$-type of a tuple from a c.e.\ index
for its $\Lang'$-type.} In parallel, look through the realized quantifier-free $\Lang$ existential
types $\exists_{\vec{x}}\exists_{\vec{y}}\exists_{z}Q(\vec{x},z,\vec{y})$
until you find such a $Q(\vec{x},z,\vec{y})$ extending $q(\vec{x},\vec{y})\land r(\vec{x},z)$
such that:
\begin{itemize}
\item $D\vDash\chi_{Q(\vec{x},z,\cdot)}(\vec{b},d)$, and
\item For every $m$ with an assigned triplet $(q_{0}(\vec{x}_{0},y_{0},\vec{z}_{0}),\varphi(\vec{x}_{0}),\tau(\vec{x}_{0},y_{0}))$,
if $\vec{b}_{0}$ are the elements of $M$ corresponding to $\vec{x}_{0}$,
$D\vDash\etau_{q_{0}\varphi Q}(\vec{b}_{0})$.
\end{itemize}
We argue that either such a $Q$ is found in finite time or otherwise
an injury will occur. Indeed, suppose that both $d$ and the elements
of $\vec{b}$ are all real, and that every triplet $(q_{0},\varphi,\tau)$
will nevermore be injured. Now, since we assume $D\vDash\chi_{q}(\vec{b})$,
there must be a tuple of real elements $\vec{c}$ in $D$ such that
$D\vDash q(\vec{b},\vec{c})$. Now, we claim that the type $Q$ of
the tuple $(\vec{b},d,\vec{c})$ satisfies the desired requisites:
\begin{itemize}
\item Since $D\vDash Q(\vec{b},d,\vec{c})$, we must have $D\vDash\exists_{\vec{y}}Q(\vec{b},d,\vec{y})$
and hence $D\vDash\chi_{Q}(\vec{b},d)$, and
\item If $m$ has the triplet $(q_{0},\varphi,\tau)$ assigned, with decomposition
$(\vec{x}_{0},y_{0},\vec{z}_{0})\subseteq M$, with $\vec{b}_{0}$
being the elements of $D$ corresponding to $\vec{x}_{0}$ and $d_{0}$
being the element corresponding to $m$, we have $D\vDash\chi_{Q}(\vec{b}_{0},d_{0})$
as a consequence of $D\vDash\chi_{Q}(\vec{b},d)$ as a consequence
of $D\vDash Q(\vec{b},d,\vec{c})$, and moreover $D\vDash\tau(\vec{b}_{0},d_{0})$
by \enuref{asmtau}, hence $D\vDash\exists_{y}\left[\chi_{Q}(\vec{b}_{0},y)\land\tau_{q\varphi}(\vec{b}_{0},y)\right])$,
and thus $D\vDash\etau_{q_{0}\varphi Q}(\vec{b}_{0})$ by Item \ref{enu:qetpetau}
of the QETP.
\end{itemize}
Once such a $Q(\vec{x},z,\vec{y})$ is found, one of two things occurs.
Either it includes $z=y_{j}$ for some value of $j$, in which case
we match $d$ to the element corresponding to $y_{j}$, or it includes
the information that all its variables represent distinct elements,
in which case we add a new element to $M$, whose relations to the
previous elements are those dictated by $Q$, and match it to $d$.

We need to make sure that \enuref{asmchi} and \enuref{asmetau} hold.
\begin{itemize}
\item \enuref{asmchi} -- By construction, since we ensured $D\vDash\chi_{Q}(\vec{b},d)$.
\item \enuref{asmetau} -- By construction.
\end{itemize}
\item Once both of the above steps are done, worker $i$ shall pause its
execution, and initialize worker $i+1$.
\end{itemize}
We now describe the process of injury. There are three means of injury:
\begin{itemize}
\item Suppose that an element of $D$ is seen to be removed. If this element
does not have a match in $M$, no action must be taken. On the other
hand, suppose that this element $d$ is matched to an element $m$
in $M$. In that event, suppose that worker $i$ is the one that made
this match, and halt the execution of all workers with index greater
than $i$, erasing matches and assigned formulas created past the
moment that $m$ and $d$ were matched. If the match between $d$
and $m$ was created by the first bullet point, restart the execution
of worker $i$. If the match between $d$ and $m$ was created by
the second bullet point, initialize worker $i+1$ (and do not restart
worker $i$).
\item Suppose that an element $m\in M$ has an assigned triplet $(q,\varphi,\tau)$,
which is seen to be deleted from the enumeration described in \rmkref{tauwork}
at a given stage. Suppose worker $i$ is the one who has assigned
this formula. Then, we halt the execution of all workers with index
greater than $i$, and erase all matches and assigned formulas created
past the moment that this formula was assigned to $m$. Then, we run
worker $i$.
\item Suppose that an element $m\in M$ has an assigned triplet $(q(\vec{x},y,\vec{z}),\varphi(\vec{x}),\tau(\vec{x},y))$
and a matched element $d$. Suppose worker $i$ is the one who has
created these matches. Suppose that the elements $\vec{x}$ are matched
to the tuple $\vec{b}$, and suppose that the current approximation
of $D\vDash\tau(\vec{b},d)$ (in the sense of \rmkref{llorc2}) has
changed to `no'. Then, we halt the execution of all workers with
index greater than $i$, erase all matches and assigned formulas created
past this point (including the match between $m$ and $d$, but we
retain the assigned triplet $(q,\varphi,\tau)$), and run worker $i$.
\end{itemize}
This concludes the construction. It remains to show that the resulting
structure $M$ is isomorphic to $D\upto\Lang$, and this is done via
a finite injury argument. We show that each worker will be injured
finitely many times. This will show that every element in $D$ is
eventually matched to an element of $M$ (the $i$-th element of $D$
(counting both real and fake elements) will be matched by, at worst,
the $i$-th worker past the moment when this element is added to $D$),
and that every element of $M$ is eventually matched to an element
of $D$ (given an orphan element of $M$, inductively consider the
first worker activated after all previous elements of $M$ have been
matched), and since these matches all preserve the predicates in $\Lang$,
the resulting map will be an isomorphism.

Suppose, for the sake of induction, that all workers prior to worker
$i$ are injured finitely many times. We show that worker $i$ itself
also suffers finite injury. We do this individually for each of the
bullet points:
\begin{itemize}
\item First, we show that the matching created by the first orphan element
of $m$ at this moment, if there is any, is injured finitely many
times. Indeed, per the notation of \rmkref{tauwork} and using Item
\ref{enu:qetp2} of the QETP, eventually a triplet $(q,\varphi,\tau)$
will be enumerated and never removed, such that the relevant elements
of $D$ satisfy $\chi_{q}\land\varphi$. Past this point, after all
previously-added false triplets have been removed, this triplet (or
another) will be permanently assigned to $m$. Afterward, we know
(see the technical note in the first bullet point of the description
of the worker process) that after enough steps we match $m$ with
a real element $d$ of $D$ that satisfies $\tau(\vec{b},d)$ at the
current approximation, which will nevermore change. Thus, past this
point, this match will never be injured.
\item Now, assuming that we've reached a time where the matching created
by the previous bullet point or previous workers will never suffer
injury, suppose that there is an unmatched element in $D$ for worker
$i$ to match with an element of $M$. If this element is real, no
injury will be suffered. If it is not, eventually it shall be erased,
and execution will move to worker $i+1$, with worker $i$ nevermore
suffering injury from that point.
\end{itemize}
This concludes the proof.
\end{proof}

\section{\protect\label{sec:prevs_applications}Previously Known Applications}

In this section, we present a list of some consequences of \thmref{main_final}.
We claim originality over none of the results themselves -- our goal
here is merely to show how they all follow from the same general principle,
and we present prior references to where the results may be found
in the literature.

All of the following results are bidirectional, but due to the direction
of this paper, we will only be focusing on the most relevant of the
two directions in each case, which will generally be arguably the
most difficult one.

\subsection{\protect\label{subsec:eqvrelsinf}Equivalence Relations of Infinite
Rank}
\begin{prop}[{Folklore, {\cites[Theorem 9.1.4]{cst_downey_melnikov}}}]
\label{prop:eqvinf}Let $(E,\eqvrel)$ be an equivalence relation
with infinitely many equivalence classes, and assume that the structure
$(E,\eqvrel,\{P_{\geq n}\}_{n\in\N})$ admits a $0'$-computable copy,
where $P_{\geq n}(x)$ is the predicate ``the equivalence class of
$x$ has size at least $n$''.\footnote{This includes the case where the equivalence class of $x$ is infinite.}
Then, $(E,\eqvrel)$ admits a computable copy.
\end{prop}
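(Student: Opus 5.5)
The plan is to apply \thmref{main_final} with $\Lang=\{\eqvrel,\not\eqvrel,=,\neq\}$. For $\Lang'$ we take $\Lang$ together with the positive predicates $P_{\geq n}$ for $n\in\N$, and zero-ary predicates $C_{N,k}$ meaning ``there are at least $k$ classes of size at least $N$''. The proof has three steps, in this order: build a $0'$-computable c.e.-typed copy $D$ of $(E,\eqvrel)$ expanded to $\Lang'$; verify the QETP for $D$; then invoke \thmref{main_final} to obtain a computable copy of $D\upto\Lang\cong(E,\eqvrel)$. The hypothesis that there are infinitely many classes, rather than infinitely many infinite classes, enters only through the trash predicate $\tau$.

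\emph{The $\chi$ formulas.} By \rmkref{assumeq} we may assume $q(\vec{x},\vec{y})$ is a complete $\Lang$-type of distinct elements. Then $q$ records a partition of $\vec{x}\cup\vec{y}$ into blocks, and the sentence $\exists_{\vec{y}}q$ reduces to two kinds of demands. First, each block that meets $\vec{x}$ and contains $j$ elements of $\vec{x}$ together with $l$ elements of $\vec{y}$ needs $P_{\geq j+l}$ of its $\vec{x}$-representative. Second, the blocks made only of $\vec{y}$-elements, say of sizes $N_{1}\geq\dots\geq N_{r}$, need sufficiently many classes disjoint from the classes of $\vec{x}$. The latter demand cannot be expressed quantifier-freely in $\vec{x}$ alone. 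So I take a disjunction over the possible numbers $s$ of classes among those of $\vec{x}$ that are large enough, combined with a conjunction of zero-ary predicates $C_{N_{i},k}$. This is a finite combination of positive predicates, hence $\llorc$, and it is computable from $q$. Here I use $\not\eqvrel$ among the $\vec{x}$ to count how many distinct classes they occupy.

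\emph{The trash predicate.} We set $\varphi=\top$ throughout. If $q$ puts $y$ in the class of some $x_{i}$, put
\[
\tau_{q\top}(\vec{x},y)\equiv\chi_{q}(\vec{x},y).
\]
Otherwise $q$ demands that $y$ lie in a new class containing at least $N$ elements in total, where $N$ counts $y$ and the $\vec{z}$ placed with it. In that case let $\tau_{q\top}(\vec{x},y)$ say that $y\not\eqvrel x_{i}$ for all $i$, and that the class of $y$ is ``maximally large'' among classes disjoint from the $\vec{x}$. Concretely this means $P_{\geq N}(y)$, and for every $n$ either $P_{\geq n}(y)$ or there are no further classes of size $\geq n$ outside the classes of $\vec{x}$. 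The ``no further classes'' clause is a negated $\llorc$ condition built from the $C$'s and the $P_{\geq\cdot}(x_{i})$, so $\tau_{q\top}$ is $\llorc_{2}$. Because there are infinitely many classes, a class disjoint from the $\vec{x}$ always exists. Whenever class sizes outside the $\vec{x}$ are bounded, a maximal one exists, which gives item \ref{enu:qetp2}. When they are unbounded, I would instead let $\tau$ (via its $0'$-computable partial domain and a nontrivial $\varphi$) demand only $P_{\geq n}(y)$ for the $n$ that $0'$ can certify are needed; I expect this to require care.

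\emph{The remaining items.} For \ref{enu:qetpetau} take $\etau_{q\top Q}\equiv\chi_{Q}$ restricted to $\vec{x}$. For \ref{enu:qetp3}, any extension $Q$ asks only for finitely many further elements in $y$'s class. A maximal class satisfies this whenever any disjoint class does, so \ref{enu:qetp3} follows.

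The step I expect to be the main obstacle is the first one: producing c.e.\ indices for the atomic $\Lang'$-types from the $0'$-computable copy. The set $\{n:P_{\geq n}(x)\}$ is always an initial segment of $\N$, so it is computable, but a c.e.\ index for it is not obviously $0'$-computable. Likewise, the true zero-ary predicates $C_{N,k}$ form a $\Sigma_{2}$ set. I would first try to let $0'$ output, for each tuple, the index of the c.e.\ set enumerating $n$ once $n$ pairwise-distinct elements equivalent to $x$ appear in a computable enumeration that $0'$ fixes in advance. If that fails for the zero-ary part, I would move the $C_{N,k}$ information into the $0'$-computable partial function $\tau$, which is allowed to be only $0'$-computable, and keep $\chi$ free of zero-ary predicates.
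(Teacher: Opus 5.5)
You deliberately work with the hypothesis exactly as printed, ``infinitely many equivalence classes''. That wording is a slip: the paper's proof, like Theorem~\ref{thm:JIeqv}, assumes infinitely many \emph{infinite} classes and says it uses this essentially. Under the printed hypothesis the proposition is false, so your plan cannot be completed.

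Here is a counterexample. Let $X\subseteq\N\setminus\{0\}$ be an infinite $\Delta^0_2$ set that is not limitwise monotonic; such sets exist by a classical result of Khisamiev. Let $E$ have exactly one class of each size $n\in X$ and no infinite class. Since $0'$ decides $X$, it can build a copy of $(E,\eqvrel,\{P_{\geq n}\}_{n})$ class by class. But a computable copy of $(E,\eqvrel)$ on domain $\N$ would make $f(x,s)=|\{u\leq s: u\eqvrel x\}|$ a witness that $X=\#E$ is limitwise monotonic. This is exactly why Proposition~\ref{prop:eqvfin} carries its extra hypothesis.

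Your plan breaks at two concrete points.
\begin{itemize}
\item Your $\chi_q$ cannot be positive. Take infinitely many singletons and one class of size $2$. The formula $\exists_{y_1,y_2}(y_1\eqvrel y_2\land y_1\neq y_2\land y_1\not\eqvrel x_1)$ holds exactly of the singletons. Yet an element of the $2$-class satisfies every atomic $\Lang'$-formula in $x_1$ that a singleton does (zero-ary atoms included), hence every positive one. The condition ``at most $s$ of the classes of $\vec x$ are large'' needs $\neg P_{\geq N}(x_i)$, which your $\Lang'$ does not contain.
\item The step you flagged in $\tau$ cannot be done. For $\vec b$ with $\chi_q(\vec b)\land\varphi(\vec b)$, items (d) and (e) of the QETP force all witnesses $y$ of $\tau_{q\varphi}(\vec b,\cdot)$ into classes of one and the same size. (If one witness lies in a class of size $\geq K$, take $Q$ extending $q$ that demands $K$ distinct elements in the class of $y$.) So without infinite classes, $\tau$ must commit to a specific size in $\#E$. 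That is the limitwise-monotonicity obstruction again, and no $0'$-certification of ``needed'' thresholds avoids it.
\end{itemize}

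With the intended hypothesis both problems disappear. Blocks of $\vec y$ avoiding $\vec x$ always fit into fresh infinite classes, and trash goes into an infinite class. What remains is your first step, the c.e.\ typing, and there you are missing the paper's key idea: do not try to type the given copy at all. Instead:
\begin{itemize}
\item Let $0'$ enumerate the finite classes of the given copy with their exact sizes, by testing $P_{\geq n}\land\neg P_{\geq n+1}$ and non-equivalence with classes already found.
\item Output a fresh class of that size each time.
\item In parallel, build $\aleph_0$ infinite classes.
\end{itemize}
This new structure is isomorphic to $E$ precisely because $E$ has infinitely many infinite classes. Every element's full type, including $P_{\geq\infty}$, is known at the moment it is output, so c.e.\ (even computable) indices are trivial.

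Over $\Lang'=\Lang\cup\{P_{\geq n}\}_{n\in\N\cup\{\infty\}}$ one then takes:
\begin{itemize}
\item $\chi_q(\vec x)\equiv q_0(\vec x)\land\bigwedge_i P_{\geq n_i}(x_i)$, where $q_0$ is the part of $q$ about $\vec x$ and $n_i$ counts the variables $q$ places in the class of $x_i$; no zero-ary predicates are needed.
\item $\tau_{q\top}(\vec x,y)\equiv\chi_q(\vec x,y)\land(P_{\geq\infty}(y)\lor\bigvee_i y\eqvrel x_i)$, which is your ``maximally large class''.
\item $\etau_{q\top Q}\equiv\chi_Q$.
\end{itemize}
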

\begin{proof}
First, we note that if $(E,\eqvrel,\{P_{\geq n}\}_{n\in\N})$ admits
a $0'$-computable copy then $(E,\eqvrel,\{P_{\geq n}\}_{n\in\N\cup\{\infty\}})$
admits a $0'$-computable c.e.-typed copy, where $P_{\geq\infty}(x)$
is the predicate ``the equivalence class of $x$ is infinite''.
We construct this copy as follows: With an oracle for $0'$, run the
construction of $(E,\eqvrel,\{P_{\geq n}\}_{n\in\N})$, and every
time you see an element in a new finite equivalence class of size
exactly $n$ (i.e.\ satisfies $P_{\geq n}$ but not $P_{\geq n+1}$),
output an equivalence class of size exactly $n$. In the limit, you
will output a copy of the substructure of $E$ consisting of the elements
in finite equivalence classes. In parallel, build countably many infinite
equivalence classes. Since we know everything about the equivalence
class of any given element when we are outputting it, it is trivial
to provide a c.e.\ (or even computable) index for the type of the
tuple of elements output up to a given point.

Next, we show that $E$ satisfies the QETP relative to the languages:
\begin{itemize}
\item $\Lang$ is the language containing $\eqvrel$, $\not\eqvrel$, $=$,
and $\neq$,
\item $\Lang'$ is the language containing $\Lang$, plus the countably
many predicates $P_{\geq n}$.
\end{itemize}
It is obvious that $\Lang$ is finite and closed under negation, so
it suffices to construct $\chi$, $\tau$, and $\etau$.
\begin{itemize}
\item To construct $\chi_{q}(\vec{x})$ given $q(\vec{x},\vec{y})$: Without
loss of generality (see \rmkref{assumeq}) we assume that $q(\vec{x},\vec{y})$
is a maximal conjunction of atomic formulas of the types:
\begin{itemize}
\item $y_{i}\eqvrel x_{j}$ or $y_{i}\not\eqvrel x_{j}$, and one of these
is included for every $i$ and $j$,
\item $y_{i}\eqvrel y_{j}$ or $y_{i}\not\eqvrel y_{j}$, and one of these
is included for every $i$ and $j$,
\item $x_{i}\eqvrel x_{j}$ or $x_{i}\not\eqvrel x_{j}$, and one of these
is included for every $i$ and $j$,
\item $y_{i}\neq x_{j}$, and all of these are included for every $i$ and
$j$,
\item $y_{i}\neq y_{j}$, and all of these are included for $i\neq j$,
\item $x_{i}\neq x_{j}$, and all of these are included for $i\neq j$.
\end{itemize}
Thus, $q(\vec{x},\vec{y})$ includes all the information about these
elements in terms of what equivalence classes they inhabit. If the
information is inconsistent, (namely, if it fails transitivity, reflexivity,
or symmetry), simply set $\chi_{q}(\vec{x})\equiv\bot$. On the other
hand, if the information is consistent, for such a tuple $\vec{y}$
to exist given the tuple $\vec{x}$ requires, and indeed is equivalent,
to the demand that the equivalence class of each $x_{i}$ is large
enough -- namely, the equivalence class of $x_{i}$ must be of size
$n_{i}$ or above, where $n_{i}$ is the number of distinct variables
$z$ such that $z\eqvrel x_{i}$ is part of the information in $q$.
As such, we set $\chi_{q}(\vec{x})\equiv q_{0}(\vec{x})\land\bigwedge_{i}P_{\geq n_{i}}(x_{i})$,
where $q_{0}(\vec{x})$ is the information in $q$ that relates the
elements of $\vec{x}$. It will satisfy the requirements about $\chi$
by construction.

Note that we are using the assumption that there exist infinitely
many arbitrarily large equivalence classes to handle variables $y_{i}$
that are not equivalent to any $x_{j}$.
\item To construct $\tau_{q\varphi}(\vec{x},y)$: The domain of $\tau$
shall be the set of pairs $(q,\top)$. Given $q(\vec{x},y,\vec{z})$,
set $\tau_{q\top}(\vec{x},y)\equiv\chi_{q}(\vec{x},y)\land\left(P_{\geq\infty}(y)\lor\bigvee_{i}y\eqvrel x_{i}\right)$.
\item To construct $\etau_{q\varphi Q}(\vec{x})$: Simply set $\etau_{q\varphi Q}(\vec{x})\equiv\chi_{Q}(\vec{x})$
in all circumstances.
\end{itemize}
It is a tedious but straight-forward exercise in model theory to verify
that the above assignments serve as witnesses to the QETP. We use
the fact that there are infinitely many infinite equivalence classes
in an essential way when verifying Item \ref{enu:qetp2} of the QETP.
By application of \thmref{main_final}, the proof is complete.
\end{proof}
There are a few properties of the above proof that will come up again,
so we remark on them now.
\begin{remark}
We have used a very weak version of \thmref{main_final}. For example:
We used no infinitary formulas, our $\tau$ was computable with computable
domain, instead of merely $0'$-computable with $0'$-c.e.\ domain,
etc. This reflects the fact that, as far as jump inversion theorems
go, \propref{eqvinf} is rather simple! We will see some other natural
(though not necessarily much more complicated) examples that use more
of the power of \thmref{main_final}.
\end{remark}
\begin{remark}
Even though \thmref{main_final} provides a $0'$-computable isomorphism
between the reduct of the original structure and the resulting structure,
\propref{eqvinf} does not provide such a guarantee. Indeed, one can
show that the best we can do in general is to find a $0''$-computable
isomorphism between the original $0'$-computable equivalence relation
-- call it $E_{0}$ --, and the computable copy -- call it $E_{1}$.
This stems from the fact that we have built an intermediate auxiliary
structure to apply \thmref{main_final} to -- say $E_{+}$ -- and
even though \thmref{main_final} guarantees that there is a $0'$-computable
isomorphism between $E_{+}$ and $E_{1}$, an isomorphism between
$E_{0}$ and $E_{+}$ may be computably more complicated. The best
that we can guarantee, and indeed the best that can be guaranteed,
is that such an isomorphism is $0''$-computable, as \propref{eqvcomplexiso}
below shows.
\end{remark}
\begin{lemma}
\label{lem:enueqv}There is a computable enumeration of all computable
equivalence relations, finite or infinite.
\end{lemma}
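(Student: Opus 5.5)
The plan is to read the statement as follows: there is a uniformly computable sequence $(E_{k})_{k\in\N}$ of equivalence relations, each on a domain that is either some $\{0,\dots,n-1\}$ or all of $\N$, with that domain computable uniformly in $k$, such that every computable equivalence relation is isomorphic to some $E_{k}$. The list will be interleaved from two pieces. The first piece lists all finite equivalence relations explicitly: each one is a finite object, described by a strong index for $n$ together with a partition of $\{0,\dots,n-1\}$. This piece is trivially uniformly computable. The second piece must cover every infinite computable equivalence relation, and all its members will have domain $\N$. For this it suffices to produce, uniformly in $e$ and in finitely many auxiliary parameters, a total computable relation on $\N$ that is always an equivalence relation, and that is isomorphic to $\{(i,j):\varphi_{e}(i,j)=1\}$ whenever $\varphi_{e}$ is total and computes an equivalence relation on $\N$.

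To build these, I will run $\varphi_{e}$ and call $k$ \emph{verified} once $\varphi_{e}$ has converged on all pairs from $\{0,\dots,k\}$ and the restriction is reflexive, symmetric and transitive. At stage $s$ the relation $E_{e,\vec p}$ must commit to the relations between element $s$ and all earlier elements. If a new verified element of $\varphi_{e}$ is available, element $s$ copies its relations to the previously copied elements. Otherwise element $s$ becomes \emph{junk}, and is placed according to the parameters $\vec p$. Every decision is made at a fixed stage, so $E_{e,\vec p}$ is total computable and transitive by construction; the only issue is that junk may alter the isomorphism type. I will handle this by a case split on the true relation $E$, with $\vec p$ guessing the case.

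In case (i), $E$ has an infinite class. The parameter $a$ names an element of that class, and every junk element is put into the class of the copy of $a$; this leaves the type unchanged whether there is finitely or infinitely much junk. Junk arising before $a$ is copied is held in a temporary class that is merged into $a$'s class once $a$ appears, which is harmless because merging is decided forward in time. In case (ii), all classes are finite and some size $n$ occurs infinitely often. With parameter $n$, junk is grouped into fresh classes of size exactly $n$. An open junk class is always completed before any new real element is copied, so no incomplete junk class survives, and adding finitely or infinitely many $n$-classes preserves the isomorphism type.

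The remaining case (iii) is the expected main obstacle: all classes are finite, and each size occurs only finitely often (so the sizes are unbounded). Here junk cannot be absorbed into any existing pattern. My first attempt will be to avoid creating junk at all, by speeding up: for a fixed true $E$, I would replace $\varphi_{e}$ by an index $e'$ which converges on each pair at a predictable time; this uses that $E$ itself is computable, so a total index can be padded to run in a prescribed uniform time. The family would be indexed by pairs $(e,c)$, where $c$ is a clock bound. For each $(e,c)$, element $s$ waits at most $c(s)$ steps for the $s$-th verified element and otherwise becomes a singleton junk element. For the correct choice of $e$ and $c$ no stall ever occurs, so no junk is created and the relation is an exact copy. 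Checking that some choice of $(e,c)$ really gives zero junk for every total index is the point I would scrutinize most carefully. If it fails, the fallback is to weaken the lemma as the paper uses it: to list only those relations needed in \propref{eqvcomplexiso}.
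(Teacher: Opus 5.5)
\textbf{There is a genuine gap in case (iii), and the clock idea does not close it.} Your cases (i) and (ii) are fine. In (i), take $a$ to be the least element of its infinite class; otherwise an earlier copy of an element of that class has already been declared inequivalent to the temporary junk class, and the merge would break transitivity.

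The problem with the clock idea has two parts.
\begin{itemize}
\item Padding an index can only slow it down. By the time hierarchy theorem, a computable relation need not be decidable within any prescribed bound. Since your domain must be all of $\N$, you also cannot buy time by delaying elements.
\item The bound $c$ cannot be an arbitrary partial computable index, because evaluating $c(s)$ may itself diverge and the stall returns. So $c$ must range over a uniformly computable family of total bounds. Then the claim that ``for the right $(e,c)$ no stall ever occurs'' becomes a nontrivial theorem: every such $E$ has a copy on $\N$ decidable within one of those bounds (a punctual-type presentation). Proving that requires exactly the junk-absorbing construction you were trying to avoid.
\end{itemize}

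\textbf{The missing idea.} Make each junk element a fresh singleton class. Whenever the verified part of $\varphi_{e}$ reveals an element that starts a new class, let that element adopt the oldest unadopted junk singleton as its image. From then on, grow that class as the copied class grows. If $E$ has infinitely many classes, new classes are revealed infinitely often, so every junk singleton is eventually adopted and no extra classes remain. This handles your (ii) and (iii) together. Your parameter trick from (i) is then needed only when $E$ has finitely many classes, in which case one of them is infinite.

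\textbf{You are also proving more than the paper needs.} The paper lets the $n$-th relation live on a c.e.\ initial segment of $\N$. It adds $k$ only once $\varphi_{n}$ has converged on all pairs from $\{0,\dots,k\}$ and the result is an equivalence relation. Otherwise the structure is simply cut off with exactly $k$ elements. No junk is ever created, and the proof is a few lines. This uniformly c.e.\ listing is all that Proposition~\ref{prop:eqvcomplexiso} uses: it enumerates only finitely many elements of each relation and asks $0'$ about class sizes. Your stronger reading, with uniformly decidable domains and total relations, is true via the adoption argument above. But your vague fallback of listing only ``those relations needed'' should be replaced by precisely this truncation idea.
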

\begin{proof}
\global\long\def\braket#1{\langle#1\rangle}%
Let $\varphi_{n}(x)$ denote the $n$-th $\{0,1\}$-valued partial
computable function. Then, for each $n$, define the $n$-th computable
equivalence relation on an initial segment of $\N$ as follows: If
we've added all numbers below a given $k$ to the equivalence relation,
then we attempt to calculate $\varphi_{n}\braket{0,k}$, $\varphi_{n}\braket{1,k}$,
$\dots$, $\varphi_{n}\braket{k,k}$ and $\varphi_{n}\braket{k,0}$,
$\varphi_{n}\braket{k,1}$, $\dots$, $\varphi_{n}\braket{k,k}$.
If any of these computations do not halt, neither $k$ nor any number
greater than $k$ is considered to be part of the $n$-th computable
equivalence relation, which is therefore finite with exactly $k$
elements. Otherwise, we check if the resulting relation on the set
$\{0,\dots,k\}$ is an equivalence relation. If it is not, we again
consider the $n$-th equivalence relation to have exactly $k$ elements;
if it is, we add $k$ to the equivalence relation and repeat with
$k+1$. This construction guarantees that any computable equivalence
relation has an isomorphic copy corresponding to some value of $n$.
\end{proof}
\begin{prop}
\label{prop:eqvcomplexiso}There is a $0'$-computable equivalence
relation $(E_{0},\eqvrel,\{P_{\geq n}\}_{n\in\N})$ with infinitely
many infinite equivalence classes such that any isomorphism $f$ between
$E_{0}\upto\eqvrel$ and a computable equivalence relation $(E_{1},\eqvrel)$
can be used to compute $0''$.
\end{prop}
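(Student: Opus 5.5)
We will diagonalize against all computable equivalence relations using \lemref{enueqv}, with $0'$ building $E_{0}$ so that any isomorphism onto a computable copy lets us recover a $\Sigma_{2}$-complete (or $\Pi_2$-complete) set. The natural coding device is the sizes of finite equivalence classes, which $0'$ controls completely but which a computable copy only exposes in the limit.

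Fix a $\Pi_{2}$-complete set $A$, e.g.\ $\mathrm{Inf}=\{e:W_{e}\text{ infinite}\}$, so that $A\equiv_{T}0''$ modulo $0'$. Let $E_{0}$ have infinitely many infinite classes. In addition, for each $e$ it has a distinguished class $C_{e}$, containing designated element $c_{e}$. The size of $C_{e}$ is $2e+2$ (finite) if $e\notin A$ and infinite if $e\in A$; we also add padding classes so that sizes never collide across different $e$. Since ``$e\in A$'' is $\Pi_{1}$ over $0'$, the oracle $0'$ can build $C_{e}$ as follows. Grow $C_{e}$ one element each time a new element of $W_{e}$ appears, which $0'$ can enumerate; note that infinite in the limit is exactly $e\in A$. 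Then decide $P_{\geq n}(c_{e})$ at the moment of output. This last step is the snag: $P_{\geq n}$ must be $0'$-computable, but whether $|C_{e}|\geq n$ asks whether $|W_{e}|\geq n-1$, which is $\Sigma_{1}$ and hence $0'$-decidable. So the structure $(E_{0},\eqvrel,\{P_{\geq n}\})$ is indeed $0'$-computable, with classes of size $|W_{e}|+1$. To avoid size collisions, encode instead by making $C_{e}$ have size $\langle e,|W_{e}|\rangle+1$, or attach a distinct ``tag'' class of size $2^{e}$ next to it. The simplest choice is to have, for each $e$, a class of size exactly $p_{e}^{k}$ for all $k\le|W_{e}|$ (with $p_e$ the $e$-th prime), so that $e\in A$ iff classes of size $p_{e}^{k}$ exist for arbitrarily large $k$.

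Given an isomorphism $f:E_{0}\to E_{1}$ with $E_{1}$ computable, we compute $0''$ from $f\oplus 0'$ (and note $f$ must then compute $0'$ too, or we argue directly from $f$). With $E_{0}$ presented $0'$-computably, the preferable reading is that $f$ together with the given presentation computes $0''$. Given $e$, we use $0'$ to list the elements of $E_{0}$ in classes of size $p_{e}^{k}$. The direct approach, deciding whether there are classes of size $p_e^k$ for unboundedly many $k$, is still $\Pi_{2}$ relative to $0'$ and thus gives no gain; so the coding has to be sharper. Instead we make a single class $C_{e}$ whose size is $|W_{e}|+1$ (possibly infinite), marked by sitting among unique tag classes that $0'$ can locate. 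Then $f(c_{e})$ is a computable-copy element, and its class in $E_{1}$ is c.e.\ to enumerate. Whether it is infinite is still $\Pi_{2}$, though, so again no gain.

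The main obstacle is therefore genuine: the isomorphism must carry information that $E_{1}$ lacks. The fix is to diagonalize, using \lemref{enueqv}, against each computable $E_{1}^{(n)}$, making the plain coding compatible with an isomorphism only if $f$ encodes $0''$. Concretely, for each $n$ we make $E_{0}$ hide the bit $A(n)$ in which of two interchangeable-looking classes is finite of a given size versus infinite. Take two tagged classes $C_{n}^{0}$ and $C_{n}^{1}$: one infinite and one of size $s_{n}$, with the infinite one chosen according to $A(n)$. The $\eqvrel$-reduct is then the same regardless of $A$. But $f$ maps the tagged element $c_{n}^{0}$ to a class in $E_{1}$ whose infinitude is decided by $A(n)$; with $f$ we compute $f(c_{n}^{0})$ and $f(c_{n}^{1})$ and enumerate both classes in $E_{1}$ until one exceeds $s_{n}$, which reveals $A(n)$. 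The remaining work, which I expect to be the delicate step, is making the tags identifiable in the $\eqvrel$-reduct via the computation from $f$: the tags must be findable by $f$'s preimage side through the $0'$-computable $E_{0}$, using unique sizes so that the pair $(C_n^0,C_n^1)$ is determined, while the $P_{\geq n}$ predicates stay $0'$-computable. This requires that $0'$ be able to decide $A(n)$ in the limit while the sizes are fixed, which calls for delaying growth with the $\Delta_{2}$ approximation of a suitable $\Sigma_2$ set.
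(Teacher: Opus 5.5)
There is a genuine gap. The coding you finally settle on cannot work, and the diagonalization against computable copies that the proof actually needs is named but never carried out.

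In your two-class scheme the decoder extracts from $f$ only the sizes of the $E_{1}$-classes of $f(c_{n}^{0})$ and $f(c_{n}^{1})$. These are isomorphism invariants, already recorded in $E_{0}$ by the predicates $P_{\geq m}$. Your stopping rule (enumerate both classes until one exceeds $s_{n}$) requires knowing $s_{n}$. But if $n\mapsto s_{n}$ were $0'$-computable, then $A(n)$ would be the truth value of $P_{\geq s_{n}+1}(c_{n}^{0})$. Since $(E_{0},\eqvrel,\{P_{\geq m}\}_{m})$ is $0'$-computable, $0'$ could read this off, contradicting $A\equiv_{T}0''$. So $s_{n}$ is necessarily invisible to $0'$. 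Showing that every such $f$ nevertheless recovers it is the entire content of the proposition, and delaying growth via a $\Delta_{2}$ approximation does not address this.

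Two smaller points. The tags you worry about are unnecessary: $f$ is a function on the domain of $E_{0}$, so you may evaluate it directly at designated elements whose positions the construction fixes. And you leave open how $0'$ itself is obtained from $f$.

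The missing idea is that the only thing $f$ knows beyond the $0'$-computable data of $E_{0}$ is \emph{where} in $E_{1}$ a designated element lands. The paper converts this position into a modulus for the $\Pi_{2}$-complete set of total indices.
\begin{itemize}
\item For each $n$, $0'$ tests $\varphi_{n}(0),\varphi_{n}(1),\dots$ in order. If all converge, the class of $b_{n0}$ is made infinite.
\item If $x$ is the first divergence, $0'$ chooses $k\geq x$ that is not the class size of any of the first $x$ elements of the first $n$ computable equivalence relations of \lemref{enueqv}. It then makes the class of $b_{n0}$ finite of size $k$.
\item A computable $E_{1}$ coincides with one of the first $n$ of these relations for all but finitely many $n$. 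So for any isomorphism $f$, the index $t$ of $f(b_{n0})$ in $E_{1}$ is at least $x$ whenever $\varphi_{n}$ is partial.
\item Hence $\varphi_{n}$ is total iff $\varphi_{n}(0),\dots,\varphi_{n}(t)$ all converge, and $0'$ answers that question.
\item The needed $0'$ comes from a separate component: pairs $a_{2n},a_{2n+1}$ that are equivalent iff $n\in0'$. Then $f$ decides $0'$ by testing whether $f(a_{2n})\eqvrel f(a_{2n+1})$ in the computable $E_{1}$.
\end{itemize}
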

\begin{proof}
We shall build $E_{0}=E_{00}\amalg E_{01}$, where $E_{00}$ shall
be used to encode $0'$ (into any isomorphism between $E_{0}$ and
a computable equivalence relation), and $E_{01}$ shall be used to
encode $0''$, via encoding the set of total Turing indices -- though
the decoding process will require the usage of $0'$, which is why
$E_{00}$ is necessary.

To build $E_{00}$: Consider designated elements $a_{0}$, $a_{1}$,
$a_{2}$, etc. where $a_{2n}$ is defined to be equivalent to $a_{2n+1}$
if, and only if, $n\in0'$, and no other equivalences exist between
any $a_{i}$ and any other element of $E_{0}$. If we have an isomorphism
$f$ between $E_{0}$ and a computable equivalence relation, we can
therefore tell if $n\in0'$ by checking whether $f(a_{2n})$ is equivalent
to $f(a_{2n+1})$.

To build $E_{01}$: Consider designated elements $\{b_{nx}\}_{(n,x)\in\N^{2}}$.
For each $n\in\N$, iterate through the values of $x\in\N$ until
such an $x$ is found with $\varphi_{n}(x)\isnotwd$. If no such $x$
is found, make $b_{ni}\eqvrel b_{nj}$ for all $i$, $j$. Otherwise,
suppose that $x$ is the first value for which $\varphi_{n}(x)\isnotwd$.
Then, spend $x$ steps enumerating elements of each of the first $n$
computable equivalence relations (as per \lemref{enueqv}), and using
$0'$ find a number $k\geq x$ such that none of these at most $xn$
elements are in an equivalence class of size exactly $k$. Then, make
$b_{n0}\eqvrel\cdots\eqvrel b_{nk}$, and no other $b_{ni}$ equivalent
to any other. The essential part of this choice is that it ensures
that no isomorphism between $E_{0}$ and one of these equivalence
classes will take $b_{n0}$ to one of their first $x$ elements.

Now, let us show that any isomorphism $f$ between $E_{0}$ and a
computable equivalence relation $E_{1}$ will compute $0''$. As argued
before, $f$ computes $0'$, so we use an oracle for $0'$ with impunity.
Suppose we wish to know whether some $n\in\N$ is a total Turing index.
Without loss of generality -- because this happens for all but finitely
many $n$ -- assume that $E_{1}$ is one of the first $n$ computable
equivalence classes. Then, find the index of $f(b_{n0})$ in the equivalence
relation $E_{1}$ -- say this index is $t$ -- and check whether
$\varphi_{n}(0)$ up to $\varphi_{n}(t)$ all terminate. If not, then
clearly $\varphi_{n}$ is nontotal. On the other hand, if $\varphi_{n}$
is nontotal, with $x$ the first element with $\varphi_{n}(x)\isnotwd$,
we know that $f(b_{n0})$ must have index $t$ greater than $x$,
in which case one of the computations $\varphi_{n}(0)$ to $\varphi_{n}(t)$
will not have terminated. Thus, checking these first $t$ elements
suffices to tell whether $\varphi_{n}$ is total.
\end{proof}
\begin{remark}
Our choice of $\tau_{q\varphi}$ included $\chi_{q}$ in its definition,
and our choice of $\etau_{q\varphi Q}$ included (and, in this case,
was equal to) $\chi_{Q}$. Inspecting the definition of QETP, one
finds that we can always do so -- that is, if $\chi_{q}$, $\tau_{q\varphi}$,
and $\etau_{q\varphi Q}$ serve as witnesses to the QETP, so will
$\chi_{q}$, $\tau_{q\varphi}\land\chi_{q}$, and $\etau_{q\varphi Q}\land\chi_{Q}$,
though the converse need not be true.

If we had wanted to, we could have modified the definition of QETP
to make our formulas simpler in practice, by replacing e.g.\ every
instance of $\tau$ with $\tau\land\chi$, and likewise for $\etau$.
For instance, Item \ref{enu:qetp3} currently requires
\[
D\vDash\forall_{\vec{x},y}\left[\etau_{q\varphi Q}(\vec{x})\land\tau_{q\varphi}(\vec{x},y)\rightarrow\chi_{Q}(\vec{x},y)\right],
\]
and we might consider making the weaker requirement that
\[
D\vDash\forall_{\vec{x},y}\left[\etau_{q\varphi Q}(\vec{x})\land\chi_{Q}(\vec{x})\land\tau_{q\varphi}(\vec{x},y)\land\chi_{q}(\vec{x},y)\rightarrow\chi_{Q}(\vec{x},y)\right].
\]

Such a change would not alter which structures satisfy the QETP, only
what their witnessing formulas are, and if we were optimizing for
the writing of applications, we could have used the weaker version
instead. For example, in the case of \propref{eqvinf}, we currently
have
\[
\tau_{q}(\vec{x},y)\equiv\chi_{q}(\vec{x},y)\land\left(P_{\geq\infty}(y)\lor\bigvee_{i}y\eqvrel x_{i}\right)\text{ and }\etau_{q\varphi Q}(\vec{x})\equiv\chi_{Q}(\vec{x}),
\]
and with the alternate version of the QETP this could be replaced
by the simpler and arguably more elegant
\[
\tau_{q}(\vec{x},y)\equiv P_{\geq\infty}(y)\lor\bigvee_{i}y\eqvrel x_{i}\text{ and }\etau_{q\varphi Q}(\vec{x})\equiv\top.
\]
We opted against using this sleeker version of the QETP because the
definition was already technical and complicated enough as it is,
and we gave priority to the understandability of the definition over
a minor gain in characters in the proof of applications.
\end{remark}

\subsection{Equivalence Relations of Finite Rank}

\propref{eqvfin} below was one of the main motivators for the current
form of the QETP. We need to state a couple of important definitions
first.
\begin{definition}[{Khisamiev, {\cites[Definition 9.1.3]{cst_downey_melnikov}}}]
\label{def:limitwisemonotonic}A set $X\subseteq\N$ is said to be
\emph{limitwise monotonic} if there is a total computable function
$f\colon\N\times\N\to\N$ such that, for every $x\in\N$, the sequence
$\{f(x,y)\}_{y\in\N}$ is weakly increasing (i.e.\ nondecreasing)
and bounded (and therefore convergent), and $X$ is the image of the
pointwise limit function $g(x)=\lim_{y}f(x,y)$.
\end{definition}
\begin{definition}[{{\cites[Definition 9.1.1]{cst_downey_melnikov}}}]
Let $(E,\eqvrel)$ be an equivalence relation. The \emph{characteristic
set of $E$} is the set
\[
\#E=\{\,n\in\N\mid\text{\ensuremath{E} admits at least one equivalence class of size \ensuremath{n}}\,\}.
\]
\end{definition}
\begin{prop}[{Folklore, {\cites[Theorem 9.1.4]{cst_downey_melnikov}}}]
\label{prop:eqvfin}Let $(E,\eqvrel)$ be an equivalence relation
with finitely many infinite equivalence classes, and assume that the
structure $(E,\eqvrel,\{P_{\geq n}\}_{n\in\N})$ admits a $0'$-computable
copy, where $P_{\geq n}(x)$ is the predicate ``the equivalence class
of $x$ has size at least $n$''. Assume moreover that the characteristic
set of $E$ is limitwise monotonic. Then, $(E,\eqvrel)$ admits a
computable copy.
\end{prop}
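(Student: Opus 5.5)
We follow the template of the proof of \propref{eqvinf}: build a $0'$-computable c.e.-typed copy of a suitable expansion of $(E,\eqvrel)$, verify the QETP over $\Lang=\{\eqvrel,\not\eqvrel,=,\neq\}$, and invoke \thmref{main_final}.

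\emph{The expanded structure.} Let $f$ witness that $\#E$ is limitwise monotonic. For each $k\in\N$ and each stage $s$, add a predicate $L_{k,s}(x)$ meaning ``the class of $x$ has size at least $f(k,s)$'', together with predicates $C_{k}(x)$ meaning ``the class of $x$ is a designated trash class of limit size $g(k)=\lim_{s}f(k,s)$''. Also include $P_{\geq n}$ for $n\in\N\cup\{\infty\}$.

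The $0'$-computable c.e.-typed copy is built in two parts, in parallel:
\begin{itemize}
\item with $0'$, reproduce the classes of the given copy that are either infinite or of a size not yet accounted for;
\item for each $k$, build a class $K_{k}$ of size $g(k)$, and mark its elements with $C_{k}$.
\end{itemize}
The size $g(k)$ is not known to $0'$, but it is a nondecreasing limit, so we only need to enumerate that $K_{k}$ has at least $f(k,s)$ elements. The atomic type of an element of $K_{k}$ then contains $P_{\geq f(k,s)}$ for all $s$, which is a c.e.\ set, exactly as \defref{cetypedpresentation} permits. The original classes are then added so that the multiset of finite class sizes is correct. This requires care: $\#E$ records which sizes occur, but the multiplicities are known to $0'$ from the given copy. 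I would handle this by making every $K_{k}$ an extra copy of a class size, and arranging via a padding argument that each size in $\#E$ occurs infinitely often in both structures. Concretely, I would precompose with an isomorphism-invariant trick, or simply note that the conclusion only needs \emph{some} copy and prove isomorphism by counting multiplicities with $0'$. I expect this bookkeeping to be fiddly but routine.

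\emph{Witnessing the QETP.} Define $\chi_{q}$ exactly as in \propref{eqvinf}: $q_{0}(\vec{x})\land\bigwedge_{i}P_{\geq n_{i}}(x_{i})$, together with a zero-ary or positive condition ensuring that the fresh classes demanded by $q$ exist. Fresh classes of size at least $n$ exist iff some size $\geq n$ lies in $\#E$, or an infinite class exists. This is a $\llorc$ condition, since it is witnessed positively by an element satisfying $P_{\geq n}$. For $\tau_{q\varphi}(\vec{x},y)$ with $y$ not forced into a class of some $x_{i}$, we want $y$ to be placed in a class that is large enough for every later demand. With only finitely many infinite classes, ``large enough'' cannot be guaranteed uniformly. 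So we let $\varphi$ range over finitary formulas recording the exact currently visible data, and let $\tau_{q\varphi}(\vec{x},y)$ say: $y$ lies in some $K_{k}$ not containing any $x_{i}$, where $k$ is chosen large enough by $0'$ relative to $\varphi$ so that $g(k)$ exceeds all sizes appearing among $\vec{x}$. Equivalently, $\tau_{q\varphi}$ says ``$y$ satisfies $C_{k}$ and $\neg(y\eqvrel x_{i})$ for all $i$'', which is a $\llorc_{2}$ formula, and $\tau$ is $0'$-computable.

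The partiality of $\tau$ is used here. The $0'$-function tests whether $g$ has unbounded values beyond what $\varphi$ records; if $\#E$ is finite, the classes in question are infinite, and we use $P_{\geq\infty}$ instead. For $\etau_{q\varphi Q}(\vec{x})$ we take $\chi_{Q}(\vec{x})$ conjoined with the positive condition $\bigvee_{s}L_{k,s}$ at the level required by $Q$. This holds iff the trash class of $y$ can absorb the number of new elements that $Q$ adds to it, and it then holds for \emph{every} $y$ in such a class. That gives Item \ref{enu:qetp3}, which I expect to be the main obstacle. The difficulty is choosing $\tau$ so that all witnesses $y$ are interchangeable with respect to $\chi_{Q}$, while still letting $\etau$ be positive ($\llorc$) given that the class sizes are only approximated from below. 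I would first try to require in $\tau$ that all $y$ lie in the single class $K_{k}$, so that interchangeability is automatic. The remaining items should then follow by the same tedious model-theoretic checks as in \propref{eqvinf}.
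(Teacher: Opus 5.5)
Your skeleton matches the paper's: a $0'$-computable c.e.-typed structure, the QETP over $\{\eqvrel,\not\eqvrel,=,\neq\}$, trash classes of limit size $g(k)$, and an $\etau$ that reads $g(k)$ off its monotone approximation. The gap is in the intermediate structure. Building a new class $K_k$ of size $g(k)$ for every $k$ changes the multiset of class sizes; take $E$ with exactly one class of each finite size and $g$ attaining each value infinitely often. So the structure you feed to Theorem~\ref{thm:main_final} is not a copy of $E$. Neither repair you suggest works. You cannot make every size occur infinitely often in $E$ itself, and the multiplicities are not known to $0'$, since ``at least $m$ classes of size exactly $n$'' is only $0'$-c.e.

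The missing observation is that no new classes are needed: $\#E$ is exactly the range of $g$, so $E$ already has a class of size exactly $g(k)$. The paper takes a faithful copy of $E$ expanded by $P_{\geq n}$ and $P_{=n}$ for $n\in\N\cup\{\infty\}$. Here $0'$ knows the exact size of each finite class when it outputs it, and the finitely many infinite classes are added non-uniformly. It lets $\varphi$ assign exact sizes to the classes of $\vec x$; this is what $P_{=n}$, absent from your $\Lang'$, is for, since an upper bound must be recorded positively. It then sets $\tau_{q\varphi}(\vec x,y)\equiv\chi_q(\vec x,y)\land\bigl(P_{=g(k)}(y)\lor\bigvee_i y\eqvrel x_i\bigr)$ with $g(k)>\max\{N_0,N_1\}$. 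Here $N_0$ is the largest finite size in $\varphi$ and $N_1$ the size $q$ demands of the class of $y$. Any class of that exact size is disjoint from the classes of $\vec x$. All such $y$ are interchangeable for $\chi_Q(\vec x,y)$, which only sees the size of $y$'s class, so Item~\ref{enu:qetp3} needs no single designated class. Also, $g(k)$ \emph{is} $0'$-computable, being a bounded nondecreasing limit, which is exactly why this $\tau$ is $0'$-computable.

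Two further steps fail as stated. First, $k$ must not be chosen by $0'$: $\etau_{q\varphi Q}$ depends on $k$, and Item~\ref{enu:qetpetau} requires it to be computable from $(q,\varphi,Q)$. The paper takes the least pair $(k,s)$ with $f(k,s)>\max\{N_0,N_1\}$. This is a computable search and already guarantees $g(k)>\max\{N_0,N_1\}$. When $Q$ puts $y$ in a fresh class needing $N$ elements, $\etau_{q\varphi Q}(\vec x)\equiv\llor_{s}\bigl(\chi_Q(\vec x)\land\llbracket f(k,s)\geq N\rrbracket\bigr)$ is then computable, and by monotonicity it is equivalent to $\chi_Q(\vec x)\land\llbracket g(k)\geq N\rrbracket$. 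Only $\tau$ uses the exact value of $g(k)$. Your $\bigvee_s L_{k,s}$ must be this disjunction of truth values, since $\etau$ has no variable $y$ to apply $L_{k,s}$ to.

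Second, your treatment of finite $\#E$ is wrong. There need not be any infinite class (infinitely many singletons plus one or two pairs). Your condition ``some size $\geq n$ lies in $\#E$ or an infinite class exists'' ignores multiplicities, so your $\chi_q$ is false when $q$ asks for two fresh pairs. The paper assumes $\#E$ infinite, so that $\chi_q$ is exactly as in Proposition~\ref{prop:eqvinf}. It disposes of finite $\#E$ separately and non-uniformly by hardcoding the finitely many multiplicities. Remark~\ref{rmk:cardefinitehard} shows this case cannot be handled uniformly in $E$ and $f$, so it cannot be folded into a construction like yours.
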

\begin{proof}
Similarly but not exactly like in proof of \propref{eqvinf}, our
first step is to construct a $0'$-computable c.e.-typed copy of $(E,\eqvrel,\{P_{\geq n}\}_{n\in\N\cup\{\infty\}},\{P_{=n}\}_{n\in\N\cup\{\infty\}})$,
where $P_{=n}(x)$ denotes ``the equivalence class of $x$ has size
exactly $n$''. The same construction as in the proof of \propref{eqvinf}
can be used with very minor modifications. The added predicates are
no issue, because when we add an element we already know its equivalence
class size exactly, and instead of manually adding infinitely many
equivalence classes, we add as many as $E$ has (a finite number which
has to be obtained non-uniformly). It then remains to show that $(E,\eqvrel,\{P_{\geq n}\}_{n\leq\infty},\{P_{=n}\}_{n\leq\infty})$
has the QETP over the language containing only $\eqvrel$ (and its
negation, and equality and its negation), though there are some added
technicalities.

Throughout, we shall assume that $E$ admits arbitrarily large finite
equivalence classes, or in other words that $\#E$ is an infinite
set. The case where $\#E$ is finite is relatively trivial and is
handled separately: If $\#E$ is finite, then it takes finitely much
space to encode how many equivalence classes $E$ admits in each size,
and we simply make a computable equivalence relation with this specification.
We will see in \rmkref{cardefinitehard} that this is necessary, and
in \rmkref{cardefinitewhy} the root cause and how to get around it.

In the following, we let $f$ be a computable function witnessing
that $\#E$ is limitwise monotonic, as in \defref{limitwisemonotonic}.
\begin{itemize}
\item To construct $\chi_{q}(\vec{x})$ given $q(\vec{x},\vec{y})$: The
same construction as in \propref{eqvinf} holds, so long as, as therein
observed, $E$ is assumed to have arbitrarily large finite equivalence
classes.
\item To construct $\tau$: Let $q(\vec{x},y,\vec{z})$ range over the quantifier-free
$\Lang$-types realized in $E$, and let $\varphi(\vec{x})$ range
over the possible assignments of equivalence class sizes for the elements
of the tuple $\vec{x}$. For such a pair $(q,\varphi)$, set $N_{0}$
to be the largest finite size of equivalence class imposed by $\varphi(\vec{x})$,
set $N_{1}$ to be the smallest size of the equivalence class of a
$y$ that satisfies $\chi_{q}(\vec{x},y)$, and find the smallest
pair $(i,j)$ in some fixed well-ordering of $\N^{2}$ that satisfies
$f(i,j)>\max\{N_{0},N_{1}\}$. Then, set $M(i)=\lim_{k}f(i,k)$. Finally,
we set 
\[
\tau_{q\varphi}(\vec{x},y)\equiv\chi_{q}(\vec{x},y)\land\left(P_{=M(i)}(y)\lor\bigvee_{i}y\eqvrel x_{i}\right).
\]
.
\item To construct $\etau_{q\varphi Q}(\vec{x})$, let $N$ be the least
equivalence class size that $y$ would need to have in order to satisfy
$\chi_{Q}(y)$, and $i$ the value obtained from the previous bullet
point, and set
\[
\etau_{q\varphi Q}(\vec{x})\equiv\llor_{k\in\N}\chi_{Q}(\vec{x})\land\left(\left\llbracket f(i,k)\geq N\right\rrbracket \lor\bigvee_{i}y\eqvrel x_{i}\right),
\]
where $\llbracket f(i,k)\geq N\rrbracket$ denotes
\[
\llbracket f(i,k)\geq N\rrbracket\equiv\begin{cases}
\top & \text{if \ensuremath{f(i,k)\geq N},}\\
\bot & \text{otherwise.}
\end{cases}
\]
\end{itemize}

As in \propref{eqvinf}, checking the QETP is a straight-forward,
if tedious, affair. The most interesting part of the proof consists
of noticing that $\etau_{q\varphi Q}(\vec{x})$ is logically equivalent
to $\chi_{Q}(\vec{x})\land\left(\llbracket M(i)\geq N\rrbracket\lor\bigvee_{i}y\eqvrel x_{i}\right)$,
which uses the fact that $f(i,k)$ is monotonic in $k$. By application
of \thmref{main_final}, the proof is complete in the case where $\#E$
is infinite.
\end{proof}

\begin{remark}
Unlike in \propref{eqvinf}, for \propref{eqvfin} we can, in fact,
guarantee the existence of a $0'$-computable isomorphism, albeit
non-uniformly so. To do this, one picks one element of each infinite
equivalence class of the original equivalence relation, say $E_{0}$,
and hard-codes this collection of representatives into our algorithm
(which is fine, because there are finitely many of them). Then, in
the case where $\#E$ is infinite (the case where $\#E$ is finite
is straightforward and left to the reader), one defines $E_{+}$ as
outlined in the proof, and defines an isomorphism between $E_{0}$
and $E_{+}$ as follows: The elements of $E_{0}$ that belong to a
finite equivalence class are mapped in an obvious way to the equivalence
class they're responsible for creating, and the elements of $E_{0}$
that belong to an infinite class are detected (by checking against
equivalence with one of the finitely many representatives), and to
each infinite class of $E_{0}$ one remembers an infinite class of
$E_{+}$ to biject it to, and does so. This isomorphism is $0'$-computable,
and since there is also a $0'$-computable isomorphism between $E_{+}$
and the final computable equivalence relation $E_{1}$, \propref{eqvfin}
can in fact be made to guarantee the existence of a $0'$-computable
isomorphism.

To show that this cannot be done uniformly, not even for the case
where the equivalence relation $E_{0}$ is guaranteed to have equivalence
classes of every finite size (which immediately makes $\#E$ uniformly
limitwise monotonic) and have at most -- or even exactly -- one
infinite equivalence class, one can piggyback on the proof of \propref{eqvcomplexiso},
by noticing that the equivalence relation therein defined is a computable
disjoint union $\amalg_{i}E_{(i)}$ of infinite equivalence relations
with either zero or one infinite classes. These equivalence relations
also do not have equivalence classes of every finite size, but since
the factor that makes the isomorphism computably complicated is the
designated elements of each $E_{(i)}$, we can simply append one equivalence
class of each size to every $E_{(i)}$. We can also ensure that each
$E_{(i)}$ has exactly one infinite class by, in the case of (in the
language of the proof of \propref{eqvcomplexiso}) $E_{00}$, simply
appending one infinite class, and in the case of each component of
$E_{01}$, making all the $\{b_{ni}\}_{i}$ equivalent for large values
of $i$ (instead of making them all singleton classes). This shows
that \propref{eqvfin} cannot be made to uniformly produce a $0'$-computable
isomorphism, even restricted to the case where the equivalence relation
has exactly one equivalence class of every size in $\N\cup\{\infty\}$.

On the flip side, if $E_{0}$ is guaranteed to have \emph{no} infinite
classes whilst still having $\#E$ infinite limitwise monotonic, we
do in fact obtain uniformity\emph{ in $E_{0}$ together with a witness
to the fact that $\#E$ is limitwise monotonic}, and this is direct
from the proof of \propref{eqvfin}: If $E_{0}$ has no equivalence
classes, the isomorphism between $E_{0}$ and $E_{+}$ is easily and
uniformly constructed, and we need the witness that $\#E$ is limitwise
monotonic to construct $\etau$.

Finally, here is a sketch of proof that you really do need a witness
to $\#E$ being limitwise monotonic as part of the input, to guarantee
uniformity. Suppose there were a computable process $F$ that took
as input a $0'$-index for an equivalence relation $(E,\eqvrel,\{P_{\geq n}\}_{n\in\N})$
with no infinite classes and with $\#E$ limitwise monotonic, and
produced as output a computable index for $(E,\eqvrel)$. Then, using
the Recursion Theorem relativized to $0'$, we may build an equivalence
relation $E$ as follows: First, look at $F(E)$. We may assume WLOG
that $F(E)$ is a well-defined index for a relational structure by
appealing to $0'$, and we may assume that it represents a valid equivalence
relation because there is a computable process that takes an index
for a relational structure $R$ and outputs an index for an equivalence
relation $E(R)$ such that, if $R$ was already an equivalence relation
then $E(R)\cong R$. Now, using $0'$, ask whether $F(E)$ contains
any elements. If not, set $E$ to consist of one equivalence class
of every size. If yes, pick an element and repeatedly ask: Does the
equivalence class of this element have size at least $n+1$? Whenever
the answer is `yes', add an equivalence class of size $n$ to $E$.
If the answer is ever seen to be `no', this implies that the equivalence
class of this element has size exactly $n$, and so we \emph{do not}
add an equivalence class of this size, but add an equivalence class
of every other size. It should be clear that this ensures that $E$
is not isomorphic to $F(E)$ despite the fact that $E$ admits no
infinite equivalence classes and $\#E$ is either $\N$ or $\N$ minus
a point, and hence limitwise monotonic. This is a contradiction, which
shows that such a computable $F$ cannot exist.
\end{remark}
\begin{remark}
\label{rmk:cardefinitehard}The fact that we have handled the ``$\#E$
is finite'' case separately (and non-uniformly) is unavoidable, and
here is why. We claim that there is no uniform way to turn an index
for a $0'$-computable equivalence relation $(E,\eqvrel,\{P_{\geq n}\}_{n\in\N})$,
even one with no infinite equivalence classes, into a computable index
for $(E,\eqvrel)$. In fact, we prove the even stronger fact: Any
oracle $X$ that is capable of uniformly turning a $0'$-computable
equivalence relation $(E,\eqvrel,\{P_{\geq n}\}_{n\in\N})$ with infinitely
many singleton classes, and either one or two size-two equivalence
classes\footnote{Note that this means $\#E=\{1,2\}$, and in particular is uniformly
limitwise monotonic.}, and no classes of any other size, into a computable index for $(E,\eqvrel)$,
may be used to compute $0''$.

Suppose we wish to determine if a computable function $f(x)$ is total.
First, note that we can uniformly (and computably) create an index
for a $0'$-computable equivalence relation $(E,\eqvrel,\{P_{\geq n}\}_{n\in\N})$
that is composed entirely of infinitely many singletons and one size-two
class, unless a value of $x$ is exists for which $f(x)\isnotwd$,
in which case we add a second equivalence class with two elements.
Then, using the oracle $X$, we create a computable index for $(E,\eqvrel)$.
Computably, we can search in $E$ for two distinct pairs of distinct
but equivalent elements. We will find two if, and only if, $f$ is
nontotal. This shows that $X$ enumerates the set of nontotal indices,
which straightforwardly implies $X\geq0'$. Thus, instead of computably
searching in $E$ for two pairs of distinct elements, $X$ can check
in finite time if such a elements exist, and thus determine whether
$f$ is total.
\end{remark}
\begin{remark}
\label{rmk:cardefinitewhy}A way to understand what makes equivalence
classes with finite characteristic set so computationally hard to
handle is the following intuition: A computable copy of $(E,\eqvrel)$
contains more descriptive power, from the perspective of $0'$, than
a $0'$-computable copy of $(E,\eqvrel,\{P_{\geq n}\}_{n\in\N})$.
In other words, there is computational content that $0'$ can read
off from a computable copy of $(E,\eqvrel)$ that it could not read
off from a $0'$-computable copy of $(E,\eqvrel,\{P_{\geq n}\}_{n})$,
and an example of such content is the answer to the questions: ``Is
there an equivalence relation in $E$ of size greater than or equal
to $n$?'' or, more generally, ``Are there at least $k$ equivalence
relations in $E$ of size $\geq n$?'' As such, knowledge of the
answer to these questions is a necessary condition to be able to turn
a $0'$-computable copy of $E$ into a computable copy thereof, because
the answer to these questions \emph{can} be computed using $0'$ from
the computable copy. This explains what makes the case with infinitely
many equivalence relations, or with arbitrarily large finite equivalence
classes, computationally easy to handle: The answer to these questions
is always `yes'. Moreover, the answer to these questions is always
simple enough that it can be encoded in finitely many bits of information,
and hence could be non-uniformly hardcoded into an algorithm to obtain
a computable copy of $(E,\eqvrel)$, but the fact that this cannot
be done uniformly explains why \propref{eqvfin} cannot be made uniform
as it is.
\end{remark}

\subsection{Khisamiev's Theorem}

The following theorem is in the literature, see e.g. Chapter 5 of
\cite{cst_downey_melnikov}. We obtain it as a corollary of \thmref{main_final}.
It was the main motivator for the definition of $0'$-computable c.e.-typed
structure.

In the following, we interpret a c.e.\ presentation of a group as
being a description of the group as a list of generators and relations,
such that there is a computer program that enumerates the list of
generators and relations. In the present case, this is equivalent
to the more general notion of c.e.\ presentation of a structure in
computability theory.
\begin{thm}[Khisamiev]
\label{thm:khisamiev}Every c.e.\ presented torsion-free abelian
group is isomorphic to a computable group. Furthermore, if the group
is non-trivial, then this computable copy can be built uniformly in
the index of the c.e.\ presentation.
\end{thm}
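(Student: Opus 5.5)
We derive Khisamiev's theorem from \thmref{main_final}, following the pattern of \propref{eqvinf}: build an auxiliary $0'$-computable c.e.-typed expansion of the group, then verify the QETP. Since \thmref{main_final} needs a relational signature, we replace the group by its graph relations: $\Lang$ contains $A(x,y,z)$ ("$x+y=z$"), its negation, $=$ and $\neq$, with $0$ and $-$ being $\Lang$-definable. The group is c.e.\ presented and torsion-free, so we write it as $F/R$ with $F$ free abelian on the generators and $R$ a c.e.\ subgroup.

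\textbf{Step 1: the c.e.-typed structure.} Using $0'$ we decide equality in $F/R$, so we can list one representative of each element. This gives a $0'$-computable copy of the group, and we expand it by the divisibility predicates
\[
\mathrm{Div}_{\vec{k},n}(\vec{x}) \equiv\ \text{``} n \mid k_1x_1+\dots+k_mx_m \text{''},
\]
for integer vectors $\vec{k}$ and $n\ge1$, together with linear-relation predicates $\mathrm{Lin}_{\vec{k}}(\vec{x})$ meaning $\sum k_ix_i=0$, and their negations $\neg\mathrm{Lin}_{\vec{k}}$. The key point is that divisibility is c.e.\ in the presentation, because $n\mid g$ iff some $h$ satisfies $nh-g\in R$. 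So given a tuple, $0'$ can output a c.e.\ index enumerating all true $\mathrm{Div}$ facts, while the finitely many relevant $\mathrm{Lin}$ and $\neg\mathrm{Lin}$ facts are decided outright by $0'$. This is exactly the situation the c.e.-typed notion was designed for: the divisibility type may be $0'$-hard, but it is c.e.\ relative to the data.

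\textbf{Step 2: the formula $\chi_q$.} Given a quantifier-free $\Lang$-type $q(\vec{x},\vec{y})$, consistent with torsion-free abelian group axioms, it amounts to a finite system: linear equations, and disequations, among $\vec{x},\vec{y}$. Solving for $\vec{y}$ over a torsion-free group is handled by Smith normal form. Existence of a solution of the equations reduces to finitely many divisibility conditions $n\mid\sum k_ix_i$ plus linear relations among the $\vec{x}$. The disequations can always be met, because a torsion-free nontrivial group is infinite, so generic choices avoid finitely many proper cosets of solution sets. Hence $\chi_q$ is a positive finitary conjunction of $\mathrm{Div}$ and $\mathrm{Lin}/\neg\mathrm{Lin}$ atoms, computable from $q$. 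The trivial group is excluded here, which accounts for the nonuniform exception in the statement.

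\textbf{Step 3: the trash predicate $\tau$ and $\etau$.} This is where I expect the real difficulty. Let $\varphi(\vec{x})$ range over the finitary formulas recording the linear-dependence pattern of $\vec{x}$ together with which divisibilities hold. Then set $\tau_{q\varphi}(\vec{x},y)$ to be $\chi_q(\vec{x},y)$ conjoined with a genericity condition: $y$ is independent from $\vec{x}$ whenever $q$ does not force a dependence. Independence is $\bigwedge_{\vec{k}}\neg\mathrm{Lin}$, which is co-c.e.; this is why $\tau$ is allowed to be $\llorc_2$ rather than $\llorc$.

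The hard part is item \ref{enu:qetp3}. Any two generic $y$ must agree on every $\chi_Q(\vec{x},y)$, yet divisibility of combinations $\sum k_ix_i+ly$ can differ between independent elements. The remedy I would try first is to build $\tau$ so that it pins the divisibility of $y$ modulo $\langle\vec{x}\rangle$ to the minimal possible. In a rank-$\ge1$ group, the $p$-heights relative to $\vec{x}$ come from a c.e.\ limit, and the partial $0'$-computable $\tau$ guesses the exact finite profile through the $\varphi$-component, using the freedom that $\varphi$ may depend on $(q,\vec{x})$. Then $\etau_{q\varphi Q}$ becomes $\chi_Q(\vec{x})$ conjoined with a decidable compatibility test between $Q$ and the pinned profile, set to $\bot$ when they conflict, mirroring the trees example.

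If exact pinning fails because the profiles are infinite, the fallback is to take $y$ in a subgroup of the form $\vec{x}$-span plus a large multiple, say $y$ ranging over $N\cdot(\text{fixed generator})$ with $N$ chosen so that $Q$'s finitely many divisibility demands are forced. Once all QETP items are checked, \thmref{main_final} yields the computable copy uniformly.
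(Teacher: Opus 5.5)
Your proposal has a genuine gap at exactly the point you flag as difficult. Step 3 never produces a $\tau$ and $\etau$ satisfying Items \ref{enu:qetp2} and \ref{enu:qetp3}, and the ``generic'' $\tau$ you start from cannot be repaired.

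Demanding that $y$ be independent of $\vec{x}$ already breaks the second bullet of Item \ref{enu:qetp2}. In $\Z$ or $\Q$ no such $y$ exists for nonzero $\vec{x}$, and the same holds in any group of finite rank once $\vec{x}$ contains a maximal independent set, even though $\chi_q(\vec{x})\land\varphi(\vec{x})$ holds. As you note yourself, independent candidates can also differ in divisibility over $\langle\vec{x}\rangle$, which breaks Item \ref{enu:qetp3}. Neither of your remedies fits the QETP. First, $\varphi$ is a finitary formula in $\vec{x}$ alone, so it cannot pin down the divisibility profile of $y$, which is in general infinite. Second, in the fallback you choose $N$ so that $Q$'s demands are met, but $\tau_{q\varphi}$ must be fixed from $(q,\varphi)$ before any extension $Q$ is seen.

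The missing idea is to make the trash element \emph{determined} by $\vec{x}$ rather than generic, so that Item \ref{enu:qetp3} comes for free. The paper proves the following: if $G\vDash\exists_{\vec{y}}q(\vec{b},\vec{y})$ and some $b_j\neq0$, then there is a solution in which every $y_i$ has the form $\frac{1}{a_i}\vec{c}_i\cdot\vec{b}$. After Smith normal form, the determined variables are such quotients. Each free variable is forbidden only finitely many values by the inequations, so it can be taken to be $kb_j$ for large $k$.

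One then lets $\varphi_{a\vec{c}}(\vec{x})$ say ``$a\mid\vec{c}\cdot\vec{x}$ and $\chi_q(\vec{x},\frac{1}{a}\vec{c}\cdot\vec{x})$'', made positive by multiplying every condition by $a$. Set $\tau_{q\varphi_{a\vec{c}}}(\vec{x},y)\equiv(ay=\vec{c}\cdot\vec{x})$. Torsion-freeness makes $y$ unique, and $\etau_{q\varphi Q}$ is the same substitution applied to $\chi_Q$.

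When all $x_i$ are $0$ (e.g.\ $\vec{x}$ empty) there is nothing to combine. Here the paper adds predicates $n\Delta(x)\equiv(x=n\delta)$ for a designated nonzero $\delta$. It takes $\tau\equiv\bigwedge_i(x_i=0)\land N\Delta(y)$ with $N$ depending only on $q$. Then $\etau$ reads off whether $\chi_Q(\vec{0},N\delta)$ holds from a c.e.\ index for the type of $\delta$. For uniformity, that index must be computed without an oracle, since an arbitrary ``fixed generator'' may be $0$ in $G$. This is why $\delta$ is chosen by a guessing procedure that only ever adds divisibilities. Your closing uniformity claim is unsupported without this.

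Separately, in Step 1 the facts $\mathrm{Lin}_{\vec{k}}$ and $\neg\mathrm{Lin}_{\vec{k}}$ about a tuple $\vec{d}$ are infinitely many, not finitely many. What makes the c.e.\ index possible is that $\{\vec{k}:\vec{k}\cdot\vec{d}=0\}$ is a c.e.\ submodule of $\Z^m$, and $0'$ can find a finite basis of it. After that, these facts are decidable and can be hard-coded into the index.
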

\begin{proof}
Given a c.e.\ presentation of a group $G$, we first build a $0'$-computable
c.e.-typed copy of $G$ over an appropriate signature $\Lang'$, and
we will then show that any nontrivial torsion-free abelian group has
the QETP (uniformly) over $\Lang\subseteq\Lang'$, where $\Lang$
is the signature containing only the ternary predicate $S(x,y,z)\equiv\text{``\ensuremath{x+y=z}''}$(and
its negation, and equality and its negation).

The signature $\Lang'$ shall contain $\Lang$, plus:
\begin{itemize}
\item The countably many predicates $Q_{n,\vec{c}}(\vec{x})$, parametrized
over $n\in\N$ and finite tuples $c_{1},\dots,c_{k}\in\Z$:
\[
Q_{n,\vec{c}}(\vec{x})\equiv\text{``\ensuremath{n\mid c_{1}x_{1}+\dots+c_{k}x_{k}}''}.
\]
Note that we allow $n$ to equal zero, in which case (since $0\mid y$
iff $y=0$) $Q_{0,\vec{c}}(\vec{x})$ corresponds to the predicate
$\vec{c}\cdot\vec{x}=0$.
\item The countably many predicates $\neg Q_{0,\vec{c}}(\vec{x})$, parametrized
over finite tuples $c_{1},\dots,c_{k}\in\Z$:
\[
\neg Q_{0,\vec{c}}(\vec{x})\equiv\text{``\ensuremath{c_{1}x_{1}+\dots+c_{k}x_{k}\neq0}''}.
\]
\item Countably many predicates $\{n\Delta(x)\}_{n\in\Z}$, whose meaning
is as follows: We will carefully pick a single designated nonzero
element of $G$, which we refer to as $\delta$, and $n\Delta(x)$
should be interpreted as ``$x=n\delta$''.
\end{itemize}
We shall require some linear algebra of $\Z$-modules to show that,
from a c.e.\ presentation of $G$, we can uniformly obtain a $0'$-computable
c.e.-typed copy of $G$. Then, we will explain how we choose the element
of $G$ that will be tagged with $\Delta$. To obtain the $0'$-computable
c.e.-typed copy of $G$, here is the basic idea: Consider the computable
process that outputs all possible linear combinations of generators.
We want to, with each new linear combination of generators, say $d$,
first check whether it is equal to any of the combinations previously
produced (this is an easy check with $0'$), and if it is not, we
want to output a c.e.\ index for its type relative to the elements
previously output. This is easy for the positive predicates -- to
check if, for example, $5\mid4d+6c$ (where $c$ is a previously output
combination of generators), it suffices to check if there is a further
combination of generators $u$ that satisfies the relation $5u-4d-6c=0$.
This is a c.e.\ check. The difficulty lies in the negative predicates,
of which $\neg Q_{0,\vec{c}}(\vec{x})$ is the general case. In other
words, we need to encode \emph{into a single c.e.\ index} the full
information about what linear combinations of elements-output-thus-far
equal zero.

Here is a fact from commutative algebra, whose proof can be found
in any standard book that covers modules over principal ideal domains
(e.g.\ \cite{dummit_foote}):
\begin{enumerate}[label={Fact \Alph*}]
\item Any submodule $M$ of $\Z^{k}$ admits a finite basis.
\end{enumerate}
We will need an effective version of this fact. A stepping stone,
which we will not directly use but displays the main idea, is the
following:
\begin{enumerate}[resume, resume*]
\item \label{enu:gauss}Given a finite set $X\subseteq\Z^{k}$, we can
effectively find a basis for the submodule of $\Z^{k}$ generated
by $X$.
\end{enumerate}
Proof idea: A slight modification of the usual Gaussian elimination
algorithm, applied to the matrix whose rows are the elements of $X$.
To zero out entries below a pivot, instead of dividing to turn the
pivot element into $1$, use the Euclidean algorithm to turn the pivot
into the GCD of itself with all elements beneath it, whereupon all
elements beneath it may be zeroed out.

Now we are ready to state and prove the version we will use:
\begin{enumerate}[resume, resume*]
\item \label{enu:cemodulebasis}Given a c.e.\ index for a submodule $M$
of $\Z^{k}$, we may $0'$-effectively find a basis of $M$.
\end{enumerate}
Proof idea: Consider the $k$-width and possibly-infinite-height matrix
whose rows are the elements of $M$. Use $0'$ to find the GCD of
the elements of the first column, call it $d$. If $d=0$ (i.e.\ all
elements of the first column are zero), skip to the next step. If
$d\neq0$, find a combination of rows of $M$ that has $d$ in its
first coordinate, and add this combination to the basis. Then, if
$k\geq1$, recursively apply this algorithm to the $(k-1)$-width
and possibly-infinite-height matrix whose elements consist of the
rows of the previous matrix that have a $0$ as a first entry. Upon
finding a combination of rows that has the new GCD as its first coordinates,
add the corresponding combination of the rows of the original matrix
to the basis. After $k$ steps, one shall have a basis of $M$.

Returning to our original problem: We have a collection of combinations
of generators, say $d_{0}$, $d_{1}$, $\dots$, $d_{k}$, and we
wish to encode all possible linear combinations of these elements
that yield zero. We do this by applying \ref{enu:cemodulebasis} to
the c.e.\ submodule of $\Z^{k+1}$ given by
\[
M=\{\,\vec{c}\in\Z^{k+1}\mid\vec{c}\cdot\vec{d}=0\,\}.
\]
With access to a basis of $M$, say $b_{1},\dots,b_{\ell}$ (encoded
into the columns of a matrix $B$), we can tell whether any set of
coefficients $\vec{c}$ is in $M$ (and hence, whether we should say
yes or no to $Q_{0,\vec{c}}(\vec{d})$) by using standard algorithms
to solve the linear equation $B\vec{x}=\vec{c}$ over $\Q$, and conclude
$\vec{c}\in M$ if and only if a solution exists and its coefficients
are all integers. By hardcoding this basis into a c.e.\ index, we
can therefore obtain a c.e.\ index for the atomic $\Lang'$-type
of a tuple $\vec{d}$ of linear combinations of generators.

\smallskip{}

Now, let us explain how to choose the element $\delta$, which we
shall use to decide the predicates $n\Delta$. Of course, this has
to have been done in advance, so that we can hardcode the combination
of generators corresponding to $\delta$ into the c.e.\ indices of
the types of the elements of $G$ in advance.

There is an easy way and a hard way to choose $\delta$. The easy
way is simply to pick an arbitrary nonzero element of $G$. The issue
is that to make this choice requires the usage of an oracle for $0'$,
which will lead to a non-uniform presentation. Nevertheless, if one
is not interested in uniformity, one may move on and ignore the difficult
but uniform way to choose the element that we will now propose.

Consider the following schema: Let $\{C_{i}\}_{i\in\N}$ enumerate
all possible combinations of generators. To start, we will guess $\delta=C_{0}$,
and look through the relations until (if ever) we see enough relations
to conclude $C_{0}=0$. When this happens, we will guess $\delta=N_{1}C_{1}$,
where $N_{1}$ is a nonzero number divisible by enough integers. This
choice is made so that, for every number $k$ for which we'd been
able to conclude $k\mid C_{0}$ before we found that $C_{0}=0$, we
have $k\mid N_{1}$ and so $k\mid N_{1}C_{1}$. We proceed in this
manner, enumerating relations until we are able to conclude that $N_{1}C_{1}=0$,
or equivalently that $C_{1}=0$. If this happens, we change our mind
to guess $\delta=N_{2}C_{2}$, where $N_{2}$ has enough divisibilities
that, again, every divisibility we'd already concluded about $N_{1}C_{1}$
also holds for $N_{2}$ and hence $N_{2}C_{2}$. Since, by assumption,
$G$ is a nontrivial group, eventually this procedure will run into
a nonzero combination of generators, and $0'$ is able to figure out
which element is $\delta$ in advance.

The reader may be wondering in what respect this procedure is more
uniform. After all, the limiting combination of generators $N_{M}C_{M}$
cannot be computed uniformly, only $0'$-uniformly. However, and crucially,
we can uniformly compute a c.e.\ index for the type of the limiting
combination of generators $\delta$ as follows: follow along the procedure
used to find $\delta$, and every time we conclude a new divisibility
for (the current guess for) $\delta$, we add this divisibility to
its type. We took care so that any divisibility we add never needs
to be removed. The predicates $\neg Q_{0,c}(x)$ are all true except
for $\neg Q_{0,0}(x)$ because the group is torsion-free, and thus,
adding those to the type, we obtain an enumeration of the type of
$\delta$.

Below, we shall be using the type of $\delta$ in an essential way
(namely, to compute $\etau$), which is why having a uniform way to
compute this type is essential.

\medskip{}
Now that we have a $0'$-computable c.e.-typed copy of our group over
the signature $\Lang'$, we shall show that every nontrivial torsion-free
abelian group with a designated nonzero element with c.e.\ atomic
type satisfies the QETP.
\begin{itemize}
\item First, we construct $\chi_{q}$. This is an adaptation of the standard
proof that the theory of torsion-free abelian groups with divisibility-by-integer
predicates admits quantifier elimination, which can be found e.g.\ in
\cites[Appendix A.2]{hodges_model_theory}. Very briefly:
\begin{itemize}
\item Assume without loss of generality that $q(\vec{x},\vec{y})$ is a
disjunction of atomic formulas (see \rmkref{assumeq}), and hence
can be expressed as a system of integer coefficient equations and
inequations
\[
q(\vec{x},\vec{y})\equiv\left\{ \begin{array}{l}
M\vec{x}=N\vec{y},\\
K\vec{x}\neq L\vec{y}.
\end{array}\right.
\]

\textbf{Important:} The notation $K\vec{x}\neq L\vec{y}$ is \emph{not}
being used two mean ``the vectors $K\vec{x}$ and $L\vec{y}$''
are distinct, but rather the stronger fact: ``every entry of the
vector $K\vec{x}$ is distinct from the corresponding entry in $L\vec{y}$''.
This means that one must not be careless in handling these inequalities
-- one cannot, for example, apply any kind of Gaussian elimination
thereto.
\item Now, write $N=ADB$, where $D$ is a diagonal matrix padded with zeros
(i.e.\ of the form $\left[\begin{smallmatrix}D_{0} & 0\\
0 & 0
\end{smallmatrix}\right]$ with $D_{0}$ diagonal) and $A$ and $B$ are invertible, all over
the integers -- this can always be done using a strategy like the
one from \ref{enu:gauss}, see e.g.\ \cites[Theorem 2.12]{bapat_graphs_matrices}.
Then, $q(\vec{x},\vec{y})$ is logically equivalent to a new system
of equations and inequations
\[
q(\vec{x},\vec{y})\Leftrightarrow\left\{ \begin{array}{l}
M'\vec{x}=DB\vec{y},\\
K'\vec{x}\neq LB^{-1}B\vec{y}.
\end{array}\right.
\]
\item The existence of a solution to this system is, since $B$ is invertible,
equivalent to the existence of a solution to the following other system
\[
\exists_{\vec{y}}q(\vec{x},\vec{y})\Leftrightarrow\exists_{\vec{z}}\left\{ \begin{array}{l}
M'\vec{x}=D\vec{z},\\
K'\vec{x}\neq L'\vec{z}.
\end{array}\right.
\]
\item The rows of the equation $M'\vec{x}=D\vec{z}$ that have nonzero entries
of $D$ yield equations of the type $nz_{i}=\vec{a}\cdot\vec{x}$.
This allows us to remove (whilst preserving logical equivalence) every
instance of $z_{i}$ from every other (in)equation: Any other (in)equation
can be multiplied by $n$ preserving logical equivalence (this uses
$n\neq0$ and the fact that we're working with torsion-free groups),
resulting in an instance of $\text{(coefficient)\ensuremath{\times}}nz_{i}$,
which can be substituted for $\text{(coefficient)}\times(\sum a_{i}x_{i})$.
Thus, we may eliminate all $z$ variables corresponding to nonzero
rows of $D$, and our system now becomes
\[
\exists_{\vec{y}}q(\vec{x},\vec{y})\Leftrightarrow\exists_{\vec{z}}\exists_{\vec{w}}\left\{ \begin{array}{l}
M_{0}\vec{x}=0,\\
n_{1}z_{1}=\vec{c}_{1}\cdot\vec{x},\\
\vdots\\
n_{\ell}z_{\ell}=\vec{c}_{\ell}\cdot\vec{x},\\
K'\vec{x}\neq L'\vec{w}.
\end{array}\right.
\]
\item The equations including the $z$ variables become divisibility statements,
i.e.\ $\exists_{z_{i}}nz_{i}=\vec{c}\cdot\vec{x}\Leftrightarrow n\mid\vec{c}\cdot\vec{x}\equiv Q_{n,\vec{c}}(\vec{x})$,
so we now need only deal with the inequations $K'\vec{x}\neq L'\vec{w}$.
\item Now, let us stratify the equations $K'\vec{x}\neq L'\vec{w}$ as follows:
First, consider the equations that use no $w$-variable. Then, consider
the ones that use $w_{1}$ and no other. Then, consider the ones that
use only $w_{1}$ and $w_{2}$, and so on. The equations that use
no $w$-variable are important -- keep them. These are a requirement
on $\vec{x}$ that we cannot get rid of. Let us refer to these equations
as $K_{0}\vec{x}\neq0$. On the other hand, at every new stage of
the strata, the resulting equations will doubtlessly be satisfied
by some $w_{i}$. For example, if the first stratum has $e$ equations,
this is forbidding at most $e$ possible values of $w_{1}$. But the
group is nontrivial and hence infinite, whereby there is at least
one valid choice for $w_{1}$. Then, in the next stratum, contingent
on this choice of $w_{1}$, there are once again at most (number of
equations) forbidden values for $w_{2}$. Simply choose $w_{2}$ to
not be one of those. Continuing in this manner, we see that we can
ensure that all equations that include a $w$ variable can be automatically
satisfied and are therefore adding no information.
\item In conclusion, we've shown that any existential statement is equivalent
to a system of the type
\begin{equation}
\exists_{\vec{y}}q(\vec{x},\vec{y})\Leftrightarrow\left\{ \begin{array}{l}
M_{0}\vec{x}=0,\\
n_{1}\mid\vec{c}_{1}\cdot\vec{x},\\
\vdots\\
n_{\ell}\mid\vec{c}_{\ell}\cdot\vec{x},\\
K_{0}\vec{x}\neq0,
\end{array}\right.\label{eq:syseqchi}
\end{equation}
with the system found effectively, and thus it is in this manner that
we define $\chi_{q}(\vec{x})$.
\item If $q$ were not a disjunction of atomic formulas, as initially assumed,
write it in conjunctive normal form as $q\equiv q_{1}\lor\dots\lor q_{k}$
and set $\chi_{q}\equiv\chi_{q_{1}}\lor\dots\lor\chi_{q_{k}}$.
\end{itemize}
\item Next, we construct $\tau$. Intuitively, given $q(\vec{x},y,\vec{z})$,
we consider all possible assignments of $y$ of the form $y=\frac{1}{a}\vec{c}\cdot\vec{x}$
(if such an element exists). We will prove below, when establishing
the QETP, that assuming not every element of $\vec{x}$ is null there
is such a combination that satisfies $\chi_{q}(\vec{x},\frac{1}{a}\vec{c}\cdot\vec{x})$.
We will choose one such combination, and set $\tau_{q\varphi}(\vec{x},y)\equiv(ay=\vec{c}\cdot\vec{x}$).
We also need to handle the case where every element of the tuple $\vec{x}$
is zero (which will generally happen if the tuple is empty, or if
it contains a single element that happens to be zero), which is why
we have the designated element $\delta$ -- if every element of $\vec{x}$
is null, we set $\tau(\vec{x},y)\equiv N\Delta(y)$ where $N$ is
large enough to encompass all divisibilities demanded by the tuple
of elements $\vec{z}$ (e.g.\ if $q(y,z)\equiv(z+z=y)$, we would
set $y=2\delta$).

Let's discuss this process more formally. Given $q(\vec{x},y,\vec{z})$:
\begin{itemize}
\item Set $\varphi_{0}(\vec{x})\equiv\land_{i}(x_{i}=0)$ (or, if we are
being unnecessarily precise, $\varphi_{0}(\vec{x})\equiv\land_{i}Q_{0,1}(x_{i})$),
and set $\tau_{q\varphi_{0}}(\vec{x},y)\equiv\varphi_{0}(\vec{x})\land N\Delta(y)$,
where $N$ is large enough to ensure $\forall_{d}\chi_{q}(\vec{0},Nd)$.
\item For every nonzero integer $a$ and tuple of integers $\vec{c}$, set
$\varphi_{a\vec{c}}(\vec{x})\equiv\text{``\ensuremath{\chi_{q}(\vec{x},\frac{1}{a}\vec{c}\cdot\vec{x})}''}$
and $\tau_{q\varphi_{a\vec{c}}}(\vec{x},y)\equiv(ay=\vec{c}\cdot\vec{x})$.
It remains to explain what we mean by $\chi_{q}(\vec{x},\frac{1}{a}\vec{c}\cdot\vec{x})$.

Since $q$ is assumed to be a type, we have by definition that $\chi_{q}(\vec{x},y)$
is a system of equations as in (\ref{eq:syseqchi}). By multiplying
all equations by $a$, we obtain an equivalent system of equations
where every instance of $y$ is being multiplied by $a$. In other
words, we can rewrite $\chi_{q}(\vec{x},y)\Leftrightarrow\psi(\vec{x},ay)$,
and thus the formula ``$\chi_{q}(\vec{x},\frac{1}{a}\vec{c}\cdot\vec{x})$''
may be taken to mean
\[
\text{``\ensuremath{\chi_{q}(\vec{x},\frac{1}{a}\vec{c}\cdot\vec{x})}''}\equiv(a\mid\vec{c}\cdot\vec{x})\land\psi(\vec{x},\vec{c}\cdot\vec{x}).
\]

\end{itemize}
\item Finally, we construct $\etau$. Given $q(\vec{x},y,\vec{z})$ and
$\varphi(\vec{x})$ as one of the above cases, as well as $Q(\vec{x},y,\vec{z},\vec{w})$
extending $q$, we wish to find a quantifier-free expression that
is equivalent to $\exists_{y}\left[\chi_{Q}(\vec{x},y)\land\tau_{q\varphi}(\vec{x},y)\right]$.
Intuitively, the idea is that in either case our formula $\tau$ determines
$y$ exactly (as a function of $\vec{x}$), so we can just ``plug
that $y$ into $\chi_{Q}(\vec{x},y)$''. To do this more precisely,
we split into cases:
\begin{itemize}
\item If $\tau_{q\varphi}(\vec{x},y)\equiv(ay=\vec{c}\cdot\vec{x})$, we
apply the same ``multiply every equation by $a$'' trick to write
a formula that corresponds to ``$\left(a\mid\vec{c}\cdot\vec{x}\right)\land\chi_{Q}(\vec{x},\frac{1}{a}\vec{c}\cdot\vec{x})$''.
The resulting formula is the desired $\etau$.
\item If $\tau_{q\varphi}(\vec{x},y)\equiv\varphi_{0}(\vec{x})\land N\Delta(y)$,
then, intuitively, we know for a tuple satisfying $\tau$ we have
that all $x_{i}$ are zero, and that $y=N\delta$. Thus, the existential
quantifier may be eliminated, and we wish to determine whether $\chi_{Q}(\vec{0},N\delta)$.
This is where it is important that we have a c.e.\ index for the
type of $\delta$ -- and, if we desire a uniform result, that we
have this index without recourse to any oracle. Knowing the c.e.\ index
for the type of $\delta$, we can just consult whether $\chi_{Q}(\vec{0},N\delta)$
holds -- more precisely, there is a computable sequence $\{B_{n}\}_{n\in\N}$
such that
\[
\left\{ \begin{aligned} & G\nvDash\chi_{Q}(\vec{0},N\delta) & \Rightarrow & B_{n}=\bot\text{ for all \ensuremath{n},}\\
 & G\vDash\chi_{Q}(\vec{0},N\delta) & \Rightarrow & B_{n}=\top\text{ for all but finitely many \ensuremath{n}.}
\end{aligned}
\right.
\]
Thus, in this scenario, we set 
\[
\etau_{q\varphi Q}(\vec{x})\equiv\llor_{n}B_{n}.
\]
\end{itemize}
\end{itemize}
Now we verify that the above definitions are witnesses to the QETP.
\begin{enumerate}[label=(\alph*)]
\item ($\Lang$ is finite and closed under negation) Obvious.
\item ($G\vDash\chi_{q}(\vec{x})\leftrightarrow\exists_{\vec{y}}q(\vec{x},\vec{y})$)
By construction.
\item There are two bullet points here. In reverse order:
\begin{itemize}
\item ($G\vDash\chi_{q}(\vec{x})\land\varphi(\vec{x})\rightarrow\exists_{y}\tau_{q\varphi}(\vec{x},y)$)
Direct from construction.
\item (Every tuple $\vec{b}$ of elements of $G$ that satisfies $\chi_{q}(\vec{b})$
satisfies some $\varphi(\vec{b})$ with $(q,\varphi)$ in domain of
$\tau$)
\begin{itemize}
\item If $\vec{b}=\vec{0}$ this is obvious.
\item If some \textbf{$b_{i}\neq0$}: We wish to show that there are $a\in\Z_{\neq0}$
and $\vec{c}\subseteq\Z$ such that $a\mid\vec{c}\cdot\vec{b}$ and
$\chi_{q}(\vec{b},\frac{1}{a}\vec{c}\cdot\vec{b})$. To this effect,
we prove the following related lemma, which obviously implies the
desired claim:
\begin{lemma}
If $q(\vec{x},\vec{y})$ is a quantifier-free type in the smaller
language $\Lang$, and $\vec{b}$ is a tuple in $G$ such that $G\vDash\exists_{\vec{y}}q(\vec{b},\vec{y})$,
and some element of $\vec{b}$ is nonzero, then there exist integer
vectors $\vec{c}_{1},\dots,\vec{c}_{\ell}$ and nonzero integers $a_{1},\dots,a_{\ell}$
such that, for every $i$, $a_{i}\mid\vec{c}_{i}\cdot\vec{b}$, and
$G\vDash q(\vec{b},\frac{1}{a_{1}}\vec{c}_{1}\cdot\vec{b},\dots,\frac{1}{a_{\ell}}\vec{c}_{\ell}\cdot\vec{b})$.
\end{lemma}
\textit{Proof of lemma:} We start by writing $q(\vec{b},\vec{y})$
as a linear system of equations
\[
q(\vec{x},\vec{y})\equiv\left\{ \begin{aligned}M\vec{b} & =N\vec{y},\\
K\vec{b} & \neq L\vec{y}.
\end{aligned}
\right.
\]
Then, as in the construction of $\chi_{q}$, we rewrite this system
in terms of alternate variables $\vec{z}$, which are related to the
variables $\vec{y}$ by the relation $\vec{z}=B\vec{y}$ for an invertible
matrix $B$:
\[
\left\{ \begin{aligned} & M'\vec{b}=D\vec{z},\\
 & K'\vec{b}\neq L'\vec{z},
\end{aligned}
\right.
\]
with $D$ a diagonal matrix padded with zeros. Now, let us look at
the rows of the equation $M'\vec{b}=D\vec{z}$. Some rows correspond
to a zero element in the diagonal of $D$, i.e.\ are of the form
$\vec{m}\cdot\vec{b}=0$. These can be ignored -- since it is known
that $\exists_{y}q(\vec{b},\vec{y})$, these rows simply consist of
true linear relations between the $\vec{b}$, and the corresponding
$z$-variables are free. Let us call them $\vec{z}_{\mathrm{free}}$.
The remaining $z$-variables are determined, corresponding to equations
of the form $\vec{c}_{i}\cdot\vec{b}=a_{i}z_{i}$. As in the construction
of $\chi_{q}$, this means that the corresponding $z$ variables are
determined as $z_{i}=\frac{1}{a_{i}}\vec{c}_{i}\cdot\vec{b}$ (which
in particular precludes the fact that $a_{i}\mid\vec{c}_{i}\cdot\vec{b}$).
Without loss of generality, we substitute the determined variables
by their corresponding expansion in terms of $\vec{b}$ (which may
require clearing denominators), leaving us only to satisfy the relation
$K''\vec{b}\neq L''\vec{z}_{\mathrm{free}}$. Here, one again applies
a similar reasoning to the one used in the construction of $\chi_{q}$,
with a minor twist. Stratifying the equations so that we first determine
$z_{1}$, then $z_{2}$, and so on, we see that at each step $z_{i}$
is only forbidden to take a finite number of values. Then, we take
some \emph{nonzero} $b_{j}$, and set $z_{i}=kb_{j}$ for some large
enough coefficient $k$.

In the end, one concludes that there is a solution to the system of
equations $q(\vec{b},B\vec{z})$ whereby every entry of $\vec{z}$
is a valid rational combination of the entries of $\vec{b}$. Multiplying
this by the integer matrix $B$ yields the desired vector $\vec{y}$,
also consisting solely of valid rational combinations, which serves
as a solution to $q(\vec{b},\vec{y})$.

\hfill{}\textit{(End proof of lemma.)}

\smallskip{}
This completes the proof of Item \ref{enu:qetp2}.
\end{itemize}
\end{itemize}
\item ($G\vDash\exists_{y}\left[\chi_{Q}(\vec{x},y)\land\tau_{q\varphi}(\vec{x},y)\right]\leftrightarrow\etau_{q\varphi Q}(\vec{x})$)
True by construction.
\item ($G\vDash\etau_{q\varphi Q}(\vec{x})\land\tau_{q\varphi}(\vec{x},y)\rightarrow\chi_{Q}(\vec{x},y)$)
Essentially, this is true because $\tau_{q\varphi}(\vec{x},y)$ \emph{always}
determines $y$ uniquely in terms of $\vec{x}$. As such, if it is
the case that there is some $y_{0}$ that satisfies $\chi_{Q}(\vec{x},y_{0})\land\tau_{q\varphi}(\vec{x},y_{0})$
(that is, $\etau_{q\varphi Q}(\vec{x})$ holds), and it is the case
that some specific $y$ satisfies $\tau_{q\varphi}(\vec{x},y)$, then
it must be the case that $y=y_{0}$ and therefore $\chi_{Q}(\vec{x},y)$.
\end{enumerate}

Thus, we are in condition to apply \thmref{main_final} to the $0'$-computable
c.e.-typed copy of $G$ (plus designated element). Since this copy
was obtained uniformly from the original presentation, the result
is uniform.
\end{proof}

\begin{remark}
A useful trick came up in the course of the prior proof: If, perchance,
$\tau_{q\varphi}(\vec{x},y)$ has the property that $y$ is determined
uniquely in terms of $\vec{x}$, then Item \ref{enu:qetp3} of the
QETP comes for free. This will not apply to any of the other examples
present in this article, but it does point at something noteworthy:
It is the job of $\tau$ to determine $y$ as uniquely as possible,
so that \thmref{main_final} ``knows'' what is OK to match $y$
with and what is not. The case where there is a single well-determined
element it can be matched with is optimal, but even in other cases,
we want $\tau$ to pin down the $\Lang'$-type of $y$ as much as
possible (and, indeed, this is in some sense the essence of Item \ref{enu:qetp3}
of the QETP).
\end{remark}

\subsection{\protect\label{subsec:linord}Linear Orders with Successors and Predecessors}
\begin{thm}[{{\cites[VII.26]{montalban_cst}}}]
\label{thm:JIlinear-1}Suppose that $L$ is a linear order for which
every element admits a successor and predecessor, and suppose that
the structure $(L,<,S)$ admits a $0'$-computable copy, where $S(x,y)$
is the successor relation. Then, the structure $(L,<)$ admits a computable
copy.
\end{thm}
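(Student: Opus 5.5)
We will follow the same two-stage template as in \propref{eqvinf} and \thmref{khisamiev}. First, from a $0'$-computable copy of $(L,<,S)$ we uniformly produce a $0'$-computable c.e.-typed copy of an expansion of $L$. Then we verify the QETP for that expansion over the signature $\Lang=\{<,\not<,=,\neq\}$, and conclude by \thmref{main_final}.

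\textbf{The expansion $\Lang'$.} Besides $\Lang$ it contains:
\begin{itemize}
\item the predicates $S^{n}(x,y)$, meaning ``$y$ is the $n$-th successor of $x$'';
\item the predicates $F_{>n}(x,y)$, meaning ``$x<y$ and there are at least $n$ elements strictly between them'';
\item the zero-ary predicates recording the order type of the quotient $L/{\sim}$ by the block relation $\sim$ (finite distance) only in the positive, c.e.\ way needed below.
\end{itemize}
Since every element has a successor and a predecessor, each block is a copy of $\Z$, so $L\cong\Z\cdot K$ for a linear order $K$.

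\textbf{The c.e.\ types.} Given a finite tuple, $0'$ can compute $S$, and hence each $S^{n}$. The positive fact $F_{>n}(x,y)$ is $0'$-c.e., but it is in fact decidable by $0'$: compute successors of $x$ up to $n$ steps and compare with $y$ using $<$. Hence $0'$ can output a c.e.\ index enumerating all true $S^{n}$ and $F_{>n}$ facts about the tuple, together with the $\Lang$-type. The negated facts $\neg S^{n}$ are deliberately not included, since deciding them for all $n$ would amount to deciding the block relation. This is exactly the asymmetry \defref{cetypedpresentation} allows.

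\textbf{Verifying the QETP.}
\begin{itemize}
\item \textbf{$\chi_{q}$.} Given an $\Lang$-type $q(\vec{x},\vec{y})$, it is a strict ordering of the variables. An existential witness $\vec{y}$ exists iff each gap between consecutive $x$'s is large enough to hold the $y$'s that $q$ places in it. Gaps below the least $x$ and above the greatest $x$ are automatically infinite. So $\chi_{q}$ is the positive conjunction of the order relations among the $\vec{x}$ and, for consecutive $x_{i}<x_{j}$ with $k$ $y$'s between them, $F_{>k-1}(x_{i},x_{j})$.
\item \textbf{$\tau$.} For $y$ placed by $q$ in a gap of $\vec{x}$ with $k$ $y$'s before it and $\ell$ after it in that gap, the generic choice is an element ``far'' from both endpoints. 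We set
\[
\tau_{q\top}(\vec{x},y)\equiv\chi_{q}(\vec{x},y)\land\bigwedge\neg S^{\le N}(x_{i},y)\land\bigwedge\neg S^{\le N}(y,x_{j}),
\]
a finite conjunction of negated positive atoms, hence $\llorc_{2}$. Here $N$ is a bound that must depend on $Q$, which is the crux. Since $N$ cannot depend on $Q$, we instead use the $\llorc_{2}$ form: $y$ is in a different block from its neighbours among $\vec{x}$, or in the same block as a neighbour at a prescribed distance. Concretely,
\[
\tau(\vec{x},y)\equiv\llor_{n}\bigl(S^{n}(x_{i},y)\land\text{(the analogous condition on }x_{j}\text{)}\bigr)\lor\bigl(\chi_{q}\land\neg\llor_{n}S^{n}(x_{i},y)\land\neg\llor_{n}S^{n}(y,x_{j})\bigr).
\]
We choose it via the $\varphi$ component, which records whether $x_{i}\sim x_{j}$, and $0'$ can guess this in the limit, matching the partial $0'$-computable $\tau$.
\begin{itemize}
\item If $\varphi$ says $x_{i}\sim x_{j}$ at distance $d$ (a finitary fact $S^{d}(x_{i},x_{j})$), we pin $y$ to the midpoint-ish position $S^{k+1}(x_{i},y)$. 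Then $y$ is determined and \ref{enu:qetp3} is free, as in the remark after \thmref{khisamiev}.
\item If $\varphi$ asserts $F_{>m}$ only, or nothing, the gap is infinite in the sense that $x_{i}\not\sim x_{j}$. We take $y$ in a block strictly between, or in $x_{i}$'s block beyond all finite distances. Since $L\cong\Z\cdot K$, any two such $y$ have the same $\Lang'$-relations to $\vec{x}$ up to the finitely many $F$ facts, all of which are true.
\end{itemize}
\item \textbf{$\etau_{q\varphi Q}$.} It is either $\chi_{Q}(\vec{x})$ or the substitution of the pinned value.
\end{itemize}

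\textbf{Main obstacle.} The hard part is Item \ref{enu:qetp2}: ensuring the domain condition when $x_{i}\not\sim x_{j}$ but $\varphi$ must be finitary. Here the non-block case has to be expressed not through $\varphi$ but inside $\tau$ as the $\neg\llor$ clause. We also need the $\tau$-witness search in \thmref{main_final} to stabilize. I would first try making $\tau$ the disjunction of the two cases above, with the first disjunct using $S^{d}(x_{i},x_{j})$ for each $d>k$, and check \ref{enu:qetp3} by the homogeneity of $\Z\cdot K$ under translations within blocks.
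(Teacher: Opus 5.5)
Your overall template (a $0'$-computable c.e.-typed copy of an expansion by distance predicates, then the QETP over $\Lang=\{<,\not<,=,\neq\}$) matches the paper, and so does your $\chi_q$. However, both substantive steps have genuine gaps.

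\textbf{The c.e.-typed copy.} That each atom $S^n(x,y)$ or $F_{>n}(x,y)$ is decidable by $0'$ in the given copy only makes the type of a tuple $0'$-computable. A c.e.\ index requires a \emph{computable} enumeration that never lists a false atom. The limit approximation of a $\Delta_2$ copy offers no $\Sigma_1$ certificates for such atoms: an element can be wrongly placed between $x$ and $y$ at arbitrarily late stages.

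Including the positive atoms $S^n$ is in fact fatal. Given such an index for the type of $(x,y)$, whether $x$ and $y$ lie in the same block becomes the $\Sigma_1$ question ``is some $S^n(x,y)$ or $S^n(y,x)$ ever enumerated?''. So $0'$ would decide the block relation of the given copy. But a $0'$-computable $(L,<,S)$ can have a block relation encoding $0''$ (compare the discussion in Section~\ref{subsec:cetypedstructures}). Only lower bounds on distance (the paper's $D_{\geq i}$, your $F$) have the right polarity.

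Even for those, the paper does not use the given copy. The missing idea is a computable shadow order $L_+$:
\begin{itemize}
\item Arrange that a real element of the approximation of $L$ is only added while all current elements are real.
\item For elements $x$ enumerated into the approximation of $L$, add a point $x_+$ placed consistently with the points of the elements currently present.
\item Refuse to add it if it would break linearity, or sit strictly between two points whose elements are currently believed to be successors.
\end{itemize}
In $L_+$ the predicates $D_{\geq i}$ are $\Sigma_1$, hence uniformly c.e., and only finitely many points lie between $x_+$ and $y_+$ for real $xSy$. One then takes the $0'$-c.e.\ suborder $L_{+-}$ of points lying between $x_+$ and $y_+$ for some real $xSy$. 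Finally, one checks that $L_{+-}\cong L$ and that the $D_{\geq i}$ read off from $L_+$ agree with the true ones on $L_{+-}$.

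\textbf{The choice of $\tau$.} You leave $\tau$ unresolved, and your non-block case fails under either reading. Take $L=\Z+\Z$ with $x_1,x_2$ in different blocks and $q$ saying $x_1<y<x_2$.
\begin{itemize}
\item There is no block strictly between $x_1$ and $x_2$, and no element of $x_1$'s block is ``beyond all finite distances''. So the clause $\neg\llor_n S^n(x_1,y)\land\neg\llor_n S^n(y,x_2)$ has no witness, your block-case disjunct is false, and Item~\ref{enu:qetp2} fails.
\item If instead you let $y$ range over $x_1$'s block, Item~\ref{enu:qetp3} fails. Let $Q$ add $100$ new variables between $x_1$ and $y$. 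A $y$ far to the right satisfies $\chi_Q\land\tau$, so $\etau_{q\varphi Q}(x_1,x_2)$ must hold. Yet the immediate successor of $x_1$ satisfies $\tau$ but not $\chi_Q$.
\end{itemize}

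The fix, as in the paper, needs no block information at all, so the domain of $\tau$ is just the pairs $(q,\top)$:
\begin{itemize}
\item Always pin $y$ to be the $(k+1)$-st successor of the largest $x_i$ below it, where $k$ is the number of $z$'s that $q$ places between $x_i$ and $y$. This is $D_{\geq k+1}(x_i,y)\land\neg D_{\geq k+2}(x_i,y)$, a $\llorc_2$ formula.
\item Symmetrically, use the least $x$ above $y$ if no $x$ lies below, and use $\top$ if there are no $x$'s.
\end{itemize}
Then $\chi_q(\vec{x})$ already guarantees enough room beyond this successor, so the choice is always available. Item~\ref{enu:qetp3} is automatic because $y$ is unique. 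Finally, $\etau_{q\top Q}$ is $\chi_Q(\vec{x})$ together with the metatheoretic check that $Q$ adds no new variable between $x_i$ and $y$.
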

\begin{proof}
The most important step is to make a $0'$-computable c.e.-typed copy
of $(L,<,\not<,=,\neq,\{D_{\geq i}\}_{i\in\N})$, where $D_{\geq i}(x,y)$
is the relation ``there exist $x=x_{0}<x_{1}<\dots<x_{i}=y$'',
as this is the bare minimum language necessary for quantifier elimination.
The main difficulty in doing so is that $0'$ does not know enough
about two arbitrary elements to create, in finite time, a c.e.\ index
for which distance relations hold true of them. Thus, we shall need
to create an intermediary structure in which some sort of distance
relations are uniformly c.e.

We know that $(L,<,S)$ is $0'$-computable, and hence there is a
computable process that produces a $(<,S)$-structure, and is allowed
to delete previously added elements, such that the set of elements
that are never deleted make up an isomorphic copy of $(L,<,S)$. We
create an auxiliary computable linear order which shall act as a computable
shadow of $(L,<,S)$, from which we will create a $0'$-computable
c.e.-typed copy of $L$.

Assume without loss of generality that, whenever a real element (i.e.\ one
that will never be deleted) is added to $L$, all elements of $L$
at that moment are themselves real (because when an element is deleted,
you can just delete all elements added since its addition, and re-add
them afterward). We create a new computable linear order, $(L_{+},<)$,
whose elements consist of, for some (but not all) elements $x$ that
were ever added to $L$ (even if later removed), a corresponding element
$x_{+}$ in $L_{+}$, in such a way that if $x$ and $y$ both coexist
in $L$ at the same time, $x<y$ iff $x_{+}<y_{+}$. Here is the process
for the creation of $L_{+}$: Each time an element $x$ is added to
$L$, we imagine an element $x_{+}$in $L_{+}$ such that, for every
$y$ in $L$ at this moment, $x_{+}$ compares to $y_{+}$ as $x$
compares to $y$. Then, we need to determine how $x_{+}$ compares
to the current elements of $L_{+}$ that are not of this form. Transitivity
will force some comparisons. The ones that are not forced can be decided
arbitrarily, e.g.\ say that $x_{+}<z$ for every element with which
the comparison is not forced. Finally, check whether adding this element
will break the requirement that $(L_{+},<)$ is a linear order (e.g.\ because
$x$ is actually a fake element of $L$ that compares to the remaining
elements of $L$ in an obviously stupid way), and importantly, whether
it stands between the image of two elements of $L$ that are currently
thought to be successors. If it does not break these requirement,
we add $x_{+}$ to $L_{+}$. Otherwise, proceed with the next element
added to $L$.

We make a few observations:
\begin{itemize}
\item Note that $L$ embeds into $L_{+}$ via the map $x\mapsto x_{+}$.
The main content of this statement is that for every real $x$ we
do add $x_{+}$ to $L_{+}$. This requires our assumption that, every
time a real element $x$ is added to $L$, all elements currently
in $L$ are also real: This ensures that $x_{+}$ compares to all
other elements of the form $y_{+}$ in a coherent way, and it is a
standard fact about linear orders (amalgamation property) that we
can then add $x_{+}$ to $L_{+}$ in such a way that the resulting
order is a linear order, and indeed the rule ``$x_{+}$ is added
as the leftmost element of the interval it's forced to be in'' is
a uniform way to do so.
\item If $xSy$ are elements of $L$, there are finitely many elements of
$L_{+}$ between $x_{+}$ and $y_{+}$. This is because no element
is added to $L_{+}$ after these two are.
\item In any computable linear order (such as $L_{+}$), the distance relations
$\{D_{\geq i}\}_{i\in\N}$ are uniformly c.e. as they are $\Sigma_{1}$-defined.
As such, there is a procedure that takes as input a finite tuple of
elements from the order $L_{+}$ and outputs a c.e.\ index for their
atomic type in the signature $(<,\not<,=,\neq,\{D_{\geq i}\}_{i\in\N})$.
\end{itemize}
We are now ready to create the $0'$-computable c.e.-typed copy of
$L$. First, we define the $0'$-c.e. subset $L_{+-}\subseteq L_{+}$
as
\[
L_{+-}=\{\,z\in L_{+}\mid\text{there exist real elements \ensuremath{xSy\in L} such that \ensuremath{x_{+}\leq z\leq y_{+}}}\,\}.
\]
Then, we $0'$-enumerate the elements of $L_{+-}$, and with each
new element we also produce a c.e.\ index for the tuple of elements
enumerated up to now, \emph{from the order $(L_{+},<)$.} We claim
that the resulting order is isomorphic (as a linear order) to $L$
and that the predicates $D_{\geq i}$ are interpreted in the correct
way.
\begin{itemize}
\item (The resulting order is isomorphic to $L$): Consider the embedding
$f\colon L\to L_{+-}$ given by $x\mapsto x_{+}$. (We've already
seen that this map is well-defined and embeds $L$ into $L_{+}$,
and from the definition of $L_{+-}$ and the fact that every element
of $L$ admits a successor (or a predecessor) it is clear that for
every $x\in L$ we have $x_{+}\in L_{+-}$.) This is an order-preserving
map with the properties:
\begin{itemize}
\item If $xSy$ then there are finitely many elements between $f(x)$ and
$f(y)$,
\item Every element of $L_{+-}$ is a finite distance away from an element
in the image of $f$.
\end{itemize}
One can argue (and indeed, the standard proof of \thmref{JIlinear-1}
also goes through this fact) that in this case $(L,<)$ and $(L_{+-},<)$
are isomorphic: Pick one representative $x$ from every finite-distance
block from $L$, and map $S^{i}x$ ($i\in\Z)$ to $S^{i}(f(x))$.
The fact that this map -- call it $g$ -- is well-defined requires
some details\footnote{One must verify that every element of $L_{+-}$ has a successor and
a predecessor. This is because every element $z$ of $L_{+-}$ satisfies
$\exists_{x}x_{+}<z<(S^{3}x)_{+}$, and there are finitely many elements
between $x_{+}$ and $(S^{3}x)_{+}$, whereby there must be a smallest
element greater (resp. smaller) than $z$ as it suffices to take the
least (resp. greatest) element in the finite interval $\left]z,(S^{3}x)_{+}\right]$
(resp. $\left[x_{+},z\right[$).}, the fact that $g$ is an embedding is obvious, and the fact that
$g$ is surjective consists of: Every element $z$ of $L_{+-}$ is
a finite distance away from some $y_{+}$, and if $x$ is the representative
of the block of $y$ in $L$, we must have that $z$ is a finite distance
away from $x_{+}$. If $i$ is the signed distance between $z$ and
$x_{+}$, we have $z=g(S^{i}x)$.
\item (The predicates $D_{\geq i}$ are correctly interpreted): For the
sake of this argument, let us denote the predicates induced from $L_{+}$
(and thus the ones we have a c.e.\ index for) by $\{D_{\geq i}^{\mathrm{ind}}\}_{i\in\N}$
and the ``true'' predicates by $\{D_{\geq i}\}_{i\in\N}.$We would
like to show that $L_{+-}\vDash D_{\geq i}^{\mathrm{ind}}(x,y)\leftrightarrow D_{\geq i}(x,y)$.
The implication $\leftarrow$ is obvious, so we focus on the implication
$\rightarrow$, which we prove by contrapositive. Suppose that $x$
and $y$ are elements of $L_{+-}$ such that $\neg D_{\geq i}(x,y)$.
Let $x^{\leftarrow}$ be an element of $L$ such that $(x^{\leftarrow})_{+}\leq x\leq(Sx^{\leftarrow})_{+}$,
and likewise define $y^{\rightarrow}$ such that $(S^{-1}y^{\rightarrow})_{+}\leq y\leq(y^{\rightarrow})_{+}$.
Then, all of the elements in the open interval $\left]x^{\leftarrow},y^{\rightarrow}\right[$
are taken to (distinct) real elements between $x$ and $y$, whence
there must be finitely (and indeed, less than $i$ many) of them.
Thus, the entirety of the interval $\left](x^{\leftarrow})_{+},(y^{\rightarrow})_{+}\right[\subseteq L_{+}$
actually lies in $L_{+-}$, and thus distances are preserved between
any two elements thereof when considering only elements of $L_{+-}$.
Since the distance between $x$ and $y$ is less than $i$ in $L_{+-}$,
it must also be so in $L_{+}$, i.e.\ $\neg D_{\geq i}^{\mathrm{ind}}(x,y)$.
\end{itemize}
Now that we have a $0'$-computable c.e.-typed copy of $(L,<,\not<,=,\neq,\{D_{\geq i}\}_{i\in\N})$,
it suffices to establish that it has the QETP, whereby $(L,<)$ has
a computable copy by \thmref{main_final}:
\begin{itemize}
\item To construct $\chi_{q}$: Given a type $q(\vec{x},\vec{y})$ (see
\rmkref{assumeq}), the demand that $\exists_{\vec{y}}q(\vec{x},\vec{y})$
corresponds to requiring that (the ordering on variables induced by
$q$ is coherent, and that) the $x_{i}$ are in a specific order,
and that the adjacent (in this order) $x_{i}$ admit enough space
between them to add the necessary $y_{j}$. This can obviously be
(computably) written as a positive combination of $D_{\geq n_{i}}(x_{i},x_{j})$.
\item To construct $\tau$: Given a type $q(\vec{x},y,\vec{z})$, we define
$\tau_{q\top}(\vec{x},y)$ as follows. Consider the linear order induced
on the variables $\vec{x}$, $y$, and $\vec{z}$ by $q$.
\begin{itemize}
\item If, in this order, there is $i$ such that $x_{i}<y$, let $i$ be
the largest such WLOG, and let $k$ be the number of variables $z_{j}$
such that $x_{i}<z_{j}<y$. Then, set
\[
\tau_{q\top}(\vec{x},y)\equiv D_{\geq k}(x_{i},y)\land\neg D_{\geq(k+1)}(x_{i},y)\text{ (which is to say \ensuremath{S^{k}(x_{i},y)})}.
\]
\item If there is no such $i$, but there is at least one $x$-variable,
let $x_{i}$ be the least $x$-variable, let $k$ be the number of
$z_{j}$ such that $y<z_{j}<x_{i}$, and set
\[
\tau_{q\top}(\vec{x},y)\equiv D_{\geq k}(y,x_{i})\land\neg D_{\geq(k+1)}(y,x_{i})\text{ (which is to say \ensuremath{S^{k}(y,x_{i})})}.
\]
\item If there are no $x$-variables, set $\tau_{q\top}(y)\equiv\top$.
\end{itemize}
\item To construct $\etau_{q\top Q}$: Depending on the case of the definition
of $\tau_{q\top}$, respectively, and using the notation therein,
\begin{itemize}
\item $\etau_{q\top Q}(\vec{x})$ says ``$\chi_{Q}(\vec{x})$ and $Q$
didn't add any new variable between $y$ and $x_{i}$'',
\item Likewise,
\item $\etau_{q\top Q}\equiv\chi_{Q}$ (applied to the empty tuple).
\end{itemize}
\end{itemize}
The verification of all the relevant properties is straight-forward
and left to the reader.
\end{proof}

\subsection{\protect\label{subsec:trees}A Simple Class of Trees}

We now present a more complete version of a proof previously sketched
in the introduction. As a reminder: If $T$ is a tree and $\sigma$
is a node, we use the term ``parent of $\sigma$'' to mean the unique,
if it exists, node that has $\sigma$ as a child. We use the term
``$a$-th parent of $\sigma$'' to mean, if it exists, the parent
of the parent of ... (repeated $a$ times) of the parent of $\sigma$.
In particular, the $0$-th parent of $\sigma$ is $\sigma$ itself.
\begin{prop}[{{\cites[Proposition 3.8]{strong_jump_inversion}}}]
\label{prop:trees}Let $T$ be an infinite tree in $\omega^{<\omega}$
(i.e.\ a nonempty subset of $\omega^{<\omega}$ closed under prefixes)
such that every node of $T$ is either a leaf or admits infinitely
many children, of which finitely many are leaves. Define on $T$ the
predicates:
\begin{itemize}
\item $\leaf(x)$, meaning $x$ has no children,
\item $\Shat ab(x,y)$, meaning that $x$ admits an $a$-th parent, and
$y$ admits a $b$-th parent, and that these two are the same,
\item $R_{a}(x)$, meaning that $x$ admits the root of the tree as an $a$-th
parent,
\end{itemize}
which have as particular cases the predicates $S(x,y)\equiv\Shat 01(x,y)$,
meaning that $y$ is a child of $x$, and $R_{0}(x)$, meaning that
$x$ is the root.

Then, if the structure $(T,\leaf,S,R_{0})$ admits a $0'$-computable
copy, the structure $(T,S,R_{0})$ admits a computable copy.

\end{prop}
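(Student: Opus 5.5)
The plan is to apply \thmref{main_final}, following the sketch given in the introduction. There are two stages: first build a $0'$-computable c.e.-typed copy of an expanded structure, then verify the QETP for the signatures
$\Lang=(S,\neg S,R_{0},\neg R_{0},=,\neq)$ and $\Lang'$, where $\Lang'$ consists of $\Lang$ together with $\neg\leaf$ and all $\Shat ab,\neg\Shat ab,R_{a},\neg R_{a}$.

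For the first stage, take a $0'$-computable copy of $(T,\leaf,S,R_{0})$. With $0'$, enumerate it so that each new node is output only after its parent, walking up from the node to the root. This is possible because $0'$ decides $S$ and $R_{0}$, and depth is finite. Once this is done, each node's depth and full ancestor chain are known at the moment it is output. Given a finite tuple, $0'$ can therefore compute the depths of its elements and all common ancestors, so $R_{a},\neg R_{a},\Shat ab,\neg\Shat ab$ are decided outright (a computable, hence c.e., index). For $\neg\leaf$, the $0'$-computable copy decides $\leaf$ directly, so this atomic information is also simply written into the type. The compatibility condition $\tp(\vec n)\subseteq\tp(\vec m)$ holds because the types are the true atomic types.

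For the second stage, we must construct $\chi$, $\tau$ and $\etau$.

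\emph{Quantifier elimination ($\chi_q$).} By \rmkref{assumeq} we may take $q(\vec x,\vec y)$ to be a complete $\Lang$-type of distinct elements. Then $q$ describes a finite forest of $S$-edges, some of whose components are anchored at the root through $R_{0}$. The condition $\exists_{\vec y}q$ amounts to three things. First, the $S,R_0$ data must be consistent with a tree. Second, for each $x_i$ forced by $q$ to have a $y$-child, $x_i$ must satisfy $\neg\leaf$. Third, there are depth and ancestor constraints, expressible as finite conjunctions and disjunctions of $R_a$ and $\Shat ab$: when $q$ connects $x_i,x_j$ through $y$-paths we need the matching $\Shat ab(x_i,x_j)$ together with depth-compatibility; when a component containing $x_i$ is anchored at the root we need the right $R_a(x_i)$; when $q$ forbids connections we need $\neg\Shat ab$ or $\neg R_a$.

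Negative information of the form ``$x_i,x_j$ are not joined by a path of length $n$'' is a finite disjunction of $\neg\Shat ab$ and depth predicates. It is positive in $\Lang'$ because $\neg\Shat ab$ belongs to $\Lang'$. Freely floating $y$-components are always realizable, since each non-leaf has infinitely many non-leaf children and the tree is infinite. We expect all of this to give a $\llorc$ formula, in fact an infinite disjunction over possible depths of the $x_i$.

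\emph{Trash and its existence ($\tau$, $\etau$).} Define $\tau_{q\top}$ as in the introduction's sketch. Item \ref{enu:qetp2} holds because, once $\chi_q$ holds, the non-leaf children of any relevant non-leaf node are infinite in number. This gives a non-leaf realization of the forced position of $y$, and in the unforced case a non-leaf node of depth $N$ unrelated to the $x_i$ except through the root. Set $\etau_{q\top Q}$ equal to $\chi_Q$ when $Q$ imposes no demand on $y$ that conflicts with $\tau$'s, and to $\bot$ otherwise. A conflict occurs, for example, when $Q$ links $y$ to some $x_j$ in the unforced case, or when $Q$ forces $y$ to be a leaf.

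\emph{Main obstacle.} The key step will be Item \ref{enu:qetp3}: any two $y,y'$ satisfying $\tau_{q\top}(\vec b,\cdot)$ must agree on $\chi_Q(\vec b,\cdot)$. The plan is a back-and-forth/automorphism argument. Any two non-leaf nodes with the same depth and the same relation to $\vec b$ are interchangeable as far as positive $\Lang'$-information about $(\vec b,y)$ is concerned. This is because $\chi_Q(\vec b,y)$ only asks for finitely many non-leaf descendants and siblings, which always exist in infinite supply beneath any non-leaf node. The leaf-count pattern under $y$, which is the only distinguishing feature, is invisible to $\chi_Q$ apart from $\neg\leaf(y)$, and that is built into $\tau$. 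Care is needed in the forced case when $y$ is an ancestor of some $x_j$. There $\tau$ pins $y$ uniquely, so Item \ref{enu:qetp3} is immediate, exactly as in the remark after \thmref{khisamiev}. With the QETP established, \thmref{main_final} yields the computable copy of $(T,S,R_{0})$.
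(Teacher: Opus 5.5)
Your outline follows the paper's route: the same $0'$-computable c.e.-typed expansion via ancestor chains, the same $\Lang\subseteq\Lang'$, the same quantifier elimination, and the introduction's $\tau$ with a consistency-check $\etau$. The gap is in the step you single out as the main obstacle. In the forced case, $\tau_{q\top}(\vec x,y)\equiv\chi_q(\vec x,y)\land\neg\leaf(y)$. With this $\tau$ it is not true that all realizers of $\tau_{q\top}(\vec b,\cdot)$ have the same relation to $\vec b$. A quantifier-free $\Lang$-type records only immediate $S$-adjacencies, so $q$ can fix the parent of $y$ while leaving open whether $y$ is a non-immediate ancestor of some other $x_j$.

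Here is a concrete counterexample. Take $T=\omega^{<\omega}$. Let $q(x_1,x_2,y)$ be the common $\Lang$-type of $(\langle\rangle,\langle0,0,0\rangle,\langle0\rangle)$ and $(\langle\rangle,\langle0,0,0\rangle,\langle1\rangle)$; it contains $S(x_1,y)$, so this is the forced case. Let $Q(x_1,x_2,y,w)$ be the type of $(\langle\rangle,\langle0,0,0\rangle,\langle0\rangle,\langle0,0\rangle)$, which contains $S(y,w)\land S(w,x_2)$. Both $\langle0\rangle$ and $\langle1\rangle$ satisfy $\tau_{q\top}(\langle\rangle,\langle0,0,0\rangle,\cdot)$, but $\chi_Q$ holds only at $\langle0\rangle$. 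Hence Item~\ref{enu:qetpetau} forces $\etau_{q\top Q}(\langle\rangle,\langle0,0,0\rangle)$ to hold, and Item~\ref{enu:qetp3} then fails at $y=\langle1\rangle$, whatever $\etau$ you choose. Your ``care'' clause only treats the case where $q$ \emph{forces} $y$ to be an ancestor of some $x_j$, not the case where $q$ merely fails to forbid it. This is exactly the paper's Case~1 ($\tau_{q\top}=\chi_q\land\neg\leaf(y)$, $\etau_{q\top Q}=\chi_Q$), so the written proof has the same hole; you inherited it rather than introduced it.

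The missing idea is to place $y$ generically in the forced case too, as Case~2 already does. Let $P^k(y)$ be the nearest ancestor of $y$ that $q$ identifies with some $P^m(x_a)$. If $k\geq1$, add to $\tau_{q\top}$ the requirement that none of $y,P(y),\dots,P^{k-1}(y)$ is an ancestor of or equal to any $x_j$, i.e.\ $\neg\Shat{c}{d}(y,x_j)$ for all $c<k$, all $d$ and all $j$. This is a $\llorc_{2}$ formula (the negation of the $\llorc$ formula $\bigvee_{c<k,\,d,\,j}\Shat{c}{d}(y,x_j)$), or a $\llorc$ one after disjoining over the depths $R_e(x_j)$.

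Item~\ref{enu:qetp2} survives. By minimality of $k$, the variables that $q$ connects to $P^{k-1}(y)$ without passing through $P^k(y)$ include no $x_j$. So in any realization, that part can be transplanted onto a fresh non-leaf child of $P^k(y)$.

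Now every $\Lang'$-atom involving $y$ is decided by $\tau$ together with atoms about $\vec x$ alone:
\begin{itemize}
\item $\neg\leaf(y)$ is true;
\item for $c\geq k$, $\Shat{c}{d}(y,x_j)\leftrightarrow\Shat{c-k+m}{d}(x_a,x_j)$ and $R_c(y)\leftrightarrow R_{c-k+m}(x_a)$;
\item for $c<k$, both are false;
\item the remaining atoms are handled similarly.
\end{itemize}
Substituting these into $\chi_Q(\vec x,y)$ computably yields a $\llorc$ formula $\theta_Q(\vec x)$. Then $\etau_{q\top Q}:=\theta_Q\land\chi_q$ satisfies Items~\ref{enu:qetpetau} and~\ref{enu:qetp3}. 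In fact Item~\ref{enu:qetp3} becomes immediate, so the back-and-forth argument you planned is not needed.
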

\begin{proof}
First, we note that from a $0'$-computable copy of $(T,\leaf,S,R_{0})$
we can uniformly compute a $0'$-computable c.e.-typed copy of $(T,\neg\leaf,\{\Shat ab,\neg\Shat ab\}_{a,b\in\N},\{R_{a},\neg R_{a}\}_{a\in\N})$,
by enumerating the elements of the copy of $T$ and, for every new
element, finding all its predecessors up to the root and comparing
them to the predecessors of all the elements enumerated up to now,
which allows us to compute (and, crucially, encode as a finite amount
of information) which $\Shat ab$ relations hold between this new
element and the others already enumerated thus far, and the distance
of this new element from the root (to compute $R_{a}$). Thus, it
suffices to show that the structure $T$ satisfies the QETP, over
the signatures
\begin{itemize}
\item $\Lang=(S,\neg S,R_{0},\neg R_{0},=,\neq)$, and
\item $\Lang'$ equals $\Lang$, plus $\neg\leaf$ and all predicates $\Shat ab$,
$\neg\Shat ab$, $R_{a}$, and $\neg R_{a}$.
\end{itemize}
To this effect, we first need to establish a (uniform) quantifier-elimination
result. Thus, suppose that $q(\vec{x},\vec{y})$ is a quantifier-free
formula in the signature $\Lang$, and let us calculate a formula
$\chi_{q}(\vec{x})$ in the signature $\Lang'$ such that $T\vDash\forall_{\vec{x}}(\chi_{q}(\vec{x})\leftrightarrow\exists_{\vec{y}}q(\vec{x},\vec{y}))$.

Suppose that $q(\vec{x},\vec{y})$ is a quantifier-free $\Lang$-type
all of whose elements are distinct (see \rmkref{assumeq}). We wish
to rewrite $\exists_{\vec{y}}q(\vec{x},\vec{y})$ in a quantifier-free
manner. Let us momentarily consider the language augmented with a
partial function symbol $P(x)$, which returns the unique predecessor
of $x$ if it exists. Then, every variable $y_{a}$ such that there
is an instance of $S(y_{a},*)$ in $q$ may be replaced by $P(*)$,
with the added requirement $\neg R_{0}(*)$. If any instance of $S(y_{a},P^{n}(y_{a}))$
is found, abort and set $\chi_{q}\equiv\bot$. Otherwise, we thus
replace $\exists_{\vec{y}}q(\vec{x},\vec{y})$ by a logically equivalent
formula, using the partial function symbol $P$, where the only variables
$y_{a}$ that remain are those that are not demanded to be the predecessor
of any other variable. For each such $y_{a}$, one of two cases occurs:
\begin{enumerate}[label={Case \arabic*.}]
\item  There is some $x_{b}$ such that we now have $x_{b}=P^{m}(y_{a})$,
for some integer $m\geq1$. In this case, we can get rid of the variable
$y_{a}$ while retaining logical equivalence, as long as we add the
demand $\neg\leaf(x_{b})$, as if $x_{b}$ is not a leaf it will have
infinitely many infinite children, which will have infinitely many
infinite children, and so on, so such a $y_{a}$ always exists.
\item The type $q$ does not demand that any predecessor of $y_{a}$ is
any $x_{b}$. In this case, we can get rid of the variable $y_{a}$,
because any (self-consistent) demand we make of $y_{a}$ will always
be satisfied by some deep enough child of the root.
\end{enumerate}
Once this process terminates, we've replaced $\exists_{\vec{y}}q(\vec{x},\vec{y})$
by a logically equivalent positive Boolean combination of: $\neg\leaf(x_{i})$,
$P^{a}(x_{i})=P^{b}(x_{j})$, $P^{a}(x_{i})\neq P^{b}(x_{j})$ (incl.
the case where one of the sides is not well-defined), $R_{0}(P^{a}(x_{i}))$,
and $\neg R_{0}(P^{a}(x_{i}))$. These are, respectively, equivalent
to: $\neg\leaf(x_{i})$, $\Shat ab(x_{i},x_{j})$, $\neg\Shat ab(x_{i},x_{j})$,
$R_{a}(x_{i})$, and $\neg R_{a}(x_{i})$, which yields the desired
formula $\chi_{q}$.

\medskip{}

Now, let us construct $\tau$. Suppose that a quantifier-free $\Lang$-type
$q(\vec{x},y,\vec{z})$ is given. Consider, as in the construction
of $\chi$, eliminating all variables $\vec{z}$ that appear as the
predecessor of another variable. Now, there are two cases:
\begin{enumerate}[label={Case \arabic*.}]
\item There are $x_{a}$, $m$, and $n$ such that $P^{m}(x_{a})=P^{n}(y)$
is part of the data of $q$. In this case, set $\tau_{q\top}(\vec{x},y)\equiv\chi_{q}(\vec{x},y)\land\neg\leaf(y)$.
\item The data of $q$ does not imply that any particular predecessor of
$y$ is equal to any particular predecessor of any $x_{a}$. There
are two sub-cases: Either the data of $q$ allows us to determine
the depth of $y$ precisely, or it only gives us a lower bound for
the depth (because none of the predecessors of $y$ is prescribed
to be the root). Let $N$ be the depth of $y$ or its lower bound,
as the case may be, and set $\tau_{q\top}(\vec{x},y)$ to mean ``$\chi_{q}(\vec{x},y)$
and $\varphi_{n}(\vec{x})$ and $\neg\leaf(y)$ and $y$ is at depth
$N$ and $y$ has no common predecessor with any $x_{a}$ other than
the root''. (As explained in the introduction, this can be expressed
as a $\llorc$ formula.).
\end{enumerate}
Finally, we construct $\etau$. Given quantifier-free $\Lang$-types
$q(\vec{x},y,\vec{z})$ and $Q(\vec{x},y,\vec{z},\vec{w})$, we define
$\etau_{q\varphi Q}(\vec{x})$ depending on which case the definition
of $\tau_{q\varphi}$ falls into:
\begin{enumerate}[label={Case \arabic*.}]
\item If $\tau_{q\top}(\vec{x},y)\equiv\chi_{q}(\vec{x},y)\land\neg\leaf(y)$,
set $\etau_{q\top Q}(\vec{x})\equiv\chi_{Q}(\vec{x})$.
\item If $\tau_{q\top}(\vec{x},y)$ is set to mean ``$\chi_{q}(\vec{x},y)$
and $\neg\leaf(y)$ and $y$ is at depth $N$ and $y$ has no common
predecessor with any $x_{a}$ other than the root'', check whether
$Q$ demands that $y$ has more than $N$ predecessors. If so, set
$\etau_{q\top Q}(\vec{x})\equiv\bot$. If it includes coherent data
about exactly $N$ distinct predecessors of $y$, check if it demands
that the $N$th predecessor is the root, and that none of the other
predecessors are related to any $x_{a}$ other than via the root.
If this is the case, set $\etau_{q\top Q}(\vec{x})\equiv\chi_{Q}(\vec{x})$,
and if not, set $\etau_{q\top Q}(\vec{x})\equiv\bot$. Finally, if
it includes data about less than $N$ predecessors of $y$, check
that none of them are the root and that none of them are related to
any $x_{a}$ other than via the root, and set $\etau_{q\top Q}(\vec{x})\equiv\chi_{Q}(\vec{x})$
(or $\bot$ if the check fails).
\end{enumerate}
This concludes the construction. A tedious model-theoretic argument
will show that these formulas satisfy all the conditions of the QETP,
and thus \thmref{main_final} applies, whence we obtain a computable
copy of the structure $(T,S,R_{0})$.
\end{proof}

\section{Escaping Tennenbaum's Theorem with More Truths}

In this section, we answer the question posed by Pakhomov in \cite{pakhomov}
that was outlined in the introduction: Do there exist theories definitionally
equivalent to ``$\mathsf{PA}$ + all $\Pi_{n}$ truths'' that admit
computable nonstandard models? The answer is yes, but in order to
motivate our construction, we will first briefly sketch Pakhomov's
original construction, so as to highlight the ways in which our construction
builds upon it.

As described in the introduction, we will henceforth be working primarily
with the theory $\ZFniptc$, which consists of the axioms of $\mathsf{ZF}$,
with the removal of the axiom of infinity, and with the addition of
the axiom of transitive closure, which (modulo the remaining axioms)
is equivalent to the axiom stating that every set is in some level
$V_{\alpha}$ of the von~Neumann hierarchy.

\subsection{\protect\label{subsec:pakhomov_original}Pakhomov's Original Construction}

As explained in the introduction, Pakhomov defined in $\ZFniptc$
a ternary predicate $S(x,y,z)$ by transfinite recursion on the von~Neumann
hierarchy, which provably satisfies the rule $x\in y\leftrightarrow\forall_{z}S(x,y,z)$,
whence the theory of $S$ is definitionally equivalent to $\ZFniptc$,
and which satisfies certain properties that make the theory of $(\in,S)$
amenable to a jump-inversion-type result, whereby a computable nonstandard
model of the theory of $S$ follows. Let us briefly sketch Pakhomov's
definition of $S$ within $\ZFniptc$. Our definition is not exactly
the same as Pakhomov's, but differs only in inessential ways.

We define an ascending sequence of relations $S_{\alpha}$ on $V_{q\alpha}$,\footnote{Using the convention on ordinal multiplication that satisfies $\beta\sup\alpha_{i}=\sup(\beta\alpha_{i})$.}
where $q$ is a large enough finite ordinal to be determined shortly
(Pakhomov's construction sets $q=6$, while our choice of construction,
which is slightly more inefficient in an attempt to make it easier
to parse, shall set $q=10$), in such a way that each $S_{\alpha}$
agrees with every other $S_{\beta}$ where mutually defined. This
relation is uniquely determined on limit ordinals, as $V_{q\sup\alpha_{i}}=\cup V_{q\alpha_{i}}$,
so it suffices to describe the successor step. As such, let us define
$S_{\alpha+1}$ in terms of $S_{\alpha}$.

Given $\alpha$, a set $A\subseteq V_{q\alpha}$, and $SA\subseteq(A\cup\{q\alpha\})^{3}$,
define an element $w(\alpha,A,SA)$ satisfying the following conditions:
\begin{itemize}
\item $w(\alpha,A,SA)\in V_{q(\alpha+1)}\setminus V_{q\alpha}$,
\item The operation $(\alpha,A,SA)\mapsto w(\alpha,A,SA)$ is injective
as a class function $V\times V\times V\to V$, and
\item $w(\alpha,A,SA)$ is not $\in$-related to any element of $A$.
\end{itemize}
An example of such a function is $w(\alpha,A,SA):=(q\alpha,(A,SA))$,
assuming the usage of Kuratowski pairs $(x,y):=\{\{x\},\{x,y\}\}$,
which will immediately satisfy the second and third bullet points.
To ensure the first bullet point is where we are compelled to choose
$q=10$, as follows:
\begin{itemize}
\item Suppose $A\subseteq V_{q\alpha}$. Then, $A\in V_{q\alpha+1}$.
\item Thus, $A\cup\{q\alpha\}\subseteq V_{q\alpha+1}$, and any pair of
its elements is thereby in $V_{q\alpha+3}$, whence any triplet is
in $V_{q\alpha+5}$.
\item As a consequence, we have $SA\subseteq V_{q\alpha+5}$, and so $SA\in V_{q\alpha+6}$.
Thus, we conclude $w(\alpha,A,SA)\in V_{q\alpha+10}$, and so we set
$q=10$ as to obtain $w(\alpha,A,SA)\in V_{q(\alpha+1)}$.
\item We also have that $w=w(\alpha,A,SA)$ is not in $V_{q\alpha}$ because
$V_{q\alpha}$ is transitive and does not contain $q\alpha$, while
$w$ does.
\end{itemize}
The motivation for defining $w(\alpha,A,SA)$ is as follows: In the
course of constructing a computable nonstandard model of the theory
of $S$, one could consider starting with a $0'$-computable nonstandard
model $D$ of $(\in,S)$, and, when made to remove an element that
was added by mistake, we instead change our mind about which element
of $D$ it represents. This requires that any commitments we've already
made about $S$ be preserved, and since we have no \textit{a priori}
control about what those wrong commitments look like, we need to ensure:
Given a few elements $a_{1},\dots,a_{n}$ of $D$, and given a hypothetical
element which is $S$-related to those elements in some hard-to-control
manner, there exists a true element within $D$ that has those relations.
This element is precisely the one obtained from the $w$ function,
where $A$ is the set $\{a_{1,}\dots,a_{n}\}$, and $SA$ encodes
the $S$-relations between our hypothetical element and the elements
of $A$.

Now, we could not simply allow any and all possible $S$-relations
to hold, or otherwise the theory of $S$ would simply be the theory
of the random $3$-hypergraph, which is far too homogeneous to be
definitionally equivalent to set theory. This is where the following
definition (which takes place inside $\ZFniptc$) comes in.

We say that a pair $(A,SA)$ is \emph{good} (for the ordinal $\alpha$)
if $SA$ is a set of triples of elements from $A\cup\{q\alpha\}$
and the following two conditions hold:
\begin{itemize}
\item $SA$ agrees with $S_{\alpha}$ on $A$, in the sense that $SA\cap A^{3}=\{(a,b,c)\in A^{3}\mid S_{\alpha}(a,b,c)\}$,
and
\item For every $a,b\in A$ with $a\in b$, we have $(a,b,q\alpha)\in SA$.
By induction on $\alpha$, this turns out to be equivalent to the
stronger demand that $\forall_{a,b\in A}a\in b\rightarrow\forall_{z\in A\cup\{q\alpha\}}(a,b,z)\in SA$.
\end{itemize}
We are now ready to define $S_{\alpha+1}$. Given a triplet $(a,b,c)\in V_{q(\alpha+1)}^{3}$,
\begin{itemize}
\item If all elements of the triplet are in $V_{q\alpha}$, set $S_{\alpha+1}(a,b,c)\equiv S_{\alpha}(a,b,c)$,
\item If, for some choice of $A$ and $SA$ that is good for $\alpha$,
every element of the triplet is either in $A$ or is equal to $w=w(\alpha,A,SA)$,
set $S_{\alpha+1}(a,b,c)$ to hold true if, and only if, upon replacing
every instance of $w$ in the triplet $(a,b,c)$ by $q\alpha$, the
resulting triplet is in $SA$. For example, if $a,b\in A$ and $c=w(\alpha,A,SA)$,
set $S_{\alpha+1}(a,b,c)\equiv[(a,b,q\alpha)\in SA]$.

In the sequence, when such a choice of $\alpha$, $A$, and $SA$
is clear from context, denote by $a^{*}$ the operation of replacing
$q\alpha$ by $w(\alpha,A,SA)$, and $a_{*}$ the inverse operation.
Thus, the definition of $S_{\alpha+1}$ in this case can be reworded
as: $S_{\alpha+1}(a,b,c)\equiv[(a_{*},b_{*},c_{*})\in SA]$ or, equivalently,
$S_{\alpha+1}(a^{*},b^{*},c^{*})\equiv[(a,b,c)\in SA]$.
\item For every other triplet from $V_{q(\alpha+1)}$, set $S_{\alpha+1}(a,b,c)$
to hold true.
\end{itemize}
With this, we define the $S$ relation within $\ZFniptc$ as follows:
Given $a$, $b$, and $c$, it holds by the axiom of transitive closure
that all three lie in some common $V_{\alpha}$. Then, evaluate $S(a,b,c)\equiv S_{\alpha}(a,b,c)$.

As originally pointed out by Pakhomov, the main defining properties
of the relation $S$ are the following two lemmas:
\begin{lemma}
Provably in $\ZFniptc$: Given sets $x$ and $y$, $x\in y$ iff $\forall_{z}S(x,y,z)$.
\end{lemma}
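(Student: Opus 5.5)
Given $x$ and $y$, I will work at the stage where both are born. Let $\alpha$ be least with $x,y\in V_{q(\alpha+1)}$, and let $z$ range over all sets. Each $z$ lies in some $V_{q(\beta+1)}\setminus V_{q\beta}$, and $S(x,y,z)=S_{\gamma}(x,y,z)$ for any $\gamma$ with all three in $V_{q\gamma}$. By the coherence of the $S_\gamma$, this equals $S_{\beta'+1}(x,y,z)$ where $\beta'=\max(\alpha,\beta)$. I split into two cases according to whether $z$ is born after both $x,y$, or not.

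($\Rightarrow$) Assume $x\in y$ and prove $S(x,y,z)$ for every $z$ by $\in$-induction, i.e.\ by induction on $\alpha$. Let $\gamma$ be the stage at which the latest of $x,y,z$ is born, so that $S(x,y,z)=S_{\gamma+1}(x,y,z)$.
\begin{itemize}
\item If all three lie in $V_{q\gamma}$, then $S_{\gamma+1}(x,y,z)=S_{\gamma}(x,y,z)$, which holds by the inductive hypothesis.
\item If the triple falls under the third clause, it is set true outright.
\item Otherwise some element of the triple equals $w=w(\gamma,A,SA)$ with $(A,SA)$ good, and the others lie in $A$. Since $w$ is not $\in$-related to any element of $A$, while $x\in y$, neither $x$ nor $y$ can be $w$ (for $x\in y$ forces $y$ to be born after $x$, and the only new element is $w$). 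The exception is $y=w$ with $x\in w$, and that is impossible by the third property of $w$. Hence $x,y\in A$ and $z=w$. Then $S_{\gamma+1}(x,y,w)\equiv[(x,y,q\gamma)\in SA]$, which holds by the second goodness clause because $x\in y$.
\end{itemize}
One subtlety is that the stated equivalence of the goodness clause with the stronger form ``$(a,b,z)\in SA$ for all $z\in A\cup\{q\alpha\}$'' is itself proved by this same induction, combined with the first clause ($SA\cap A^{3}$ agrees with $S_\alpha$). I would prove both facts simultaneously.

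($\Leftarrow$) Assume $x\notin y$ and exhibit a $z$ with $\neg S(x,y,z)$. Take $\gamma$ large enough that $x,y\in V_{q\gamma}$, set $A=\{x,y\}$, and let $SA$ consist of $A^{3}\cap S_\gamma$ together with all triples that involve $q\gamma$, except $(x,y,q\gamma)$.
\begin{itemize}
\item The first goodness clause holds by construction.
\item The second goodness clause only demands $(a,b,q\gamma)\in SA$ for pairs $a\in b$ in $A$. The pair $(x,y)$ is not such a pair, since $x\notin y$. Note that $x=y$ is allowed here, because $x\notin x$ holds by foundation.
\end{itemize}
So $(A,SA)$ is good for $\gamma$. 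Put $z=w(\gamma,A,SA)$; then $S(x,y,z)=S_{\gamma+1}(x,y,w)\equiv[(x,y,q\gamma)\in SA]$, which is false.

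The main obstacle is checking that $S_{\gamma+1}$ is well defined: a triple must not fall under the second clause for two different good pairs with conflicting answers. This follows from the injectivity of $w$ together with the fact that $w\notin V_{q\gamma}$. A triple containing $w(\gamma,A,SA)$ determines $(A,SA)$, except when the triple consists only of elements of $A$, and that case is handled by the first clause. A triple can also contain two distinct $w$'s; such triples fall under the third clause and are set to true. All of this is formalizable in $\ZFniptc$ using transfinite recursion along the von~Neumann hierarchy (\rmkref{vonneumann}).
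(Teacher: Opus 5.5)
Your proof is correct and follows the paper's argument. The forward direction is the paper's minimal-counterexample argument recast as a case analysis at the stage where the latest of $x,y,z$ is born; since that choice already makes your first case vacuous, the induction you set up is never actually used. The backward direction uses the same witness $w(\gamma,\{x,y\},SA)$. Your $SA$ (all triples involving $q\gamma$ except $(x,y,q\gamma)$) is a small improvement: the paper's $SA=\{(y,x,q\alpha)\}\cup(S_\alpha\cap A^3)$ fails when $x=y$ and has to fall back on its parenthetical alternative $SA=S_\alpha\cap A^3$.
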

\begin{proof}
Suppose $x\in y$, but also that there is $z$ such that $\neg S(x,y,z)$.
Let $\alpha$ be the least ordinal for which there exist $x,y,z\in V_{q\alpha}$
with $x\in y$ and $\neg S_{\alpha}(x,y,z)$. The only way for this
to happen is if, for this value of $\alpha$, there is a good choice
of $A$ and $SA$ for which $(x_{*},y_{*},z_{*})\notin SA$. Because
$\alpha$ is minimal, at least one of $x_{*}$, $y_{*}$, or $z_{*}$
must be $q\alpha$, and since $x\in y$ it cannot be $x_{*}$ nor
$y_{*}$. However, since $x_{*}\in y_{*}$ and $z_{*}=q\alpha$ we
must then have $(x_{*},y_{*},z_{*})\in SA$, a contradiction. This
proves that $x\in y$ implies $\forall_{z}S(x,y,z)$.

For the other direction, assume $x\notin y$. Pick a large enough
value of $\alpha$ such that $A=\{x,y\}$ is contained in $V_{q\alpha}$,
and consider the set $SA=\{(y,x,q\alpha)\}\cup(S_{\alpha}\cap A^{3})$.
It can be checked that the pair $(A,SA)$ is good (in fact, if $y\notin x$
we could even set $SA=S_{\alpha}\cap A^{3}$). As such, for $w=w(\alpha,A,SA)$,
we shall have $S_{\alpha+1}(x,y,w)$ iff $(x,y,q\alpha)\in SA$, which
is not the case. Thus, $\neg S_{\alpha}(x,y,w)$, and so $\exists_{z}\neg S(x,y,z)$
as desired.
\end{proof}
\begin{lemma}
\label{lem:sflex}Let $M$ be a model of $\ZFniptc$. Let $A$ be
a finite\footnote{Here we mean finite from the perspective of the metatheory, but indeed
this result holds also from within the theory, basically by definition.} subset of $M$, and suppose that we wish to find an element $w$
that $S$-relates to all elements of $A$ in a prescribed manner.
So long as the prescription satisfies the rule: ``For all $a,b\in A$
with $a\in b$, we have $S(a,b,w)$'', there is in fact some $w\in M\setminus A$,
not $\in$-related to any element of $A$, satisfying this prescription.
\end{lemma}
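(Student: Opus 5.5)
The plan is to take $w$ to be the value $w(\alpha,A,SA)$ of the injective pairing function used in the construction of $S$, at a suitably chosen level $\alpha$ and with a carefully chosen good pair $(A,SA)$. We work inside $M$. Since $A$ is finite, the prescription is a finite object and can be coded inside $M$. It consists of, for each triple of positions drawn from $A\cup\{w\}$ that involves $w$ at least once, a truth value for $S$. By the axiom of transitive closure (\rmkref{vonneumann}), there is an ordinal $\alpha$ with $A\subseteq V_{q\alpha}$. Fix such an $\alpha$.

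Next define $SA\subseteq(A\cup\{q\alpha\})^{3}$ as follows. On $A^{3}$ let it agree with $S_{\alpha}$. For triples involving $q\alpha$, include exactly those triples which, after replacing $q\alpha$ by $w$, the prescription declares to be $S$-true. Now check that $(A,SA)$ is good for $\alpha$. The first condition holds by construction. The second condition requires $(a,b,q\alpha)\in SA$ whenever $a,b\in A$ and $a\in b$, and this is exactly the hypothesis on the prescription. Set $w=w(\alpha,A,SA)$. Then $w\in V_{q(\alpha+1)}\setminus V_{q\alpha}$, so $w\notin A$, and by the stated properties of $w(\cdot)$ it is not $\in$-related to any element of $A$.

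It remains to verify that $S(a,b,c)$ has the prescribed value for every triple from $A\cup\{w\}$ that involves $w$. All such elements lie in $V_{q(\alpha+1)}$, so $S$ agrees with $S_{\alpha+1}$ there. By the second clause of the definition of $S_{\alpha+1}$, we get $S_{\alpha+1}(a,b,c)\equiv[(a_{*},b_{*},c_{*})\in SA]$, and this is precisely the prescription.

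The main obstacle is that $S_{\alpha+1}$ must be well-defined. A triple of elements might lie in $A'\cup\{w(\alpha,A',SA')\}$ for two different good pairs. Injectivity of $w(\cdot)$ pins down the pair, provided the triple actually contains $w$. A triple entirely inside $V_{q\alpha}$ is governed by the first clause, where $S_{\alpha+1}=S_{\alpha}$. That case is also consistent here, because $SA$ agrees with $S_{\alpha}$ on $A^{3}$. One small further point is that a prescribed $S(w,w,w)$-type triple is covered by the same clause, since every entry is $w$.
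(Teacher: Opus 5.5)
Your proposal is correct and follows the paper's argument. The paper's proof is the one-line remark that the lemma follows from the definition of $S$ together with the fact that metatheoretically finite sets are represented in $M$. That is exactly your choice of $w=w(\alpha,A,SA)$, with $SA$ coding the prescription and the hypothesis being precisely the goodness condition; your discussion of the well-definedness of $S_{\alpha+1}$ is harmless, but it belongs to the construction of $S$ rather than to this lemma.
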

\begin{proof}
This follows basically from the definition of the $S$ relation, together
with the fact that every finite set (from the perspective of the metatheory)
is represented in $M$.
\end{proof}
The above two properties suffice to establish the following jump inversion
theorem, though we'll forego the proof, as \thmref{gen_pakhomov_jump_inversion}
completely subsumes it.
\begin{thm}
\label{thm:basic_pakhomov_jump_inversion}If $D$ is a $0'$-computable
model of all true facts about $(\in,S)$ in $\ZFniptc$, the reduct
$D\upto S$ admits a computable copy.
\end{thm}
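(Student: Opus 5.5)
We will derive \thmref{basic_pakhomov_jump_inversion} from \thmref{main_final}. Take $\Lang=\{S,\neg S,=,\neq\}$, and let $\Lang'$ consist of $\Lang$ together with $\in$, $\notin$, and predicates for finitely many definable notions we need. Given the $0'$-computable model $D$ of $\Tboth$, we first make it a $0'$-computable c.e.-typed structure over $\Lang'$. Since $\Lang'$ will be chosen so that $0'$ decides each atomic predicate, we have computable (hence c.e.) types. Concretely, we may expand $\Lang'$ by a Skolem-style family: for each finite set-theoretic formula we can add its definable relation, and $0'$ still decides it, because $D$ is a $0'$-computable model of a complete theory extending $\Tboth$. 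Without loss of generality we take $D$ to be a model of a complete extension, since its elementary diagram is then $0'$-computable only if we restrict to finitely many quantifier levels. For this reason we keep $\Lang'$ small, namely $\Lang\cup\{\in,\notin\}$, and let the quantifier elimination produce formulas only in these predicates.

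The key step is to verify the QETP, and we use \lemref{sflex} throughout.

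\emph{Constructing $\chi_q$.} Let $q(\vec x,\vec y)$ be an $\Lang$-type of distinct elements. A witness tuple $\vec y$ exists over $\vec x$ iff (i) $q$ restricted to $\vec x$ holds, and (ii) no $y_j$ is forced into an $\in$-relation with earlier data in a way that contradicts $q$. To handle (ii), we add the $y_j$ one at a time, each by \lemref{sflex}, taking it not $\in$-related to anything already present. The only constraint \lemref{sflex} imposes is $S(a,b,w)$ for $a\in b$ in $A$. The same constraint applies to triples with two new elements, because new elements are $\in$-unrelated to everything. So $\chi_q(\vec x)$ is the conjunction of $q\upto\vec x$ with $\bigwedge\neg(x_i\in x_j)$ over those pairs for which $q$ demands $\neg S(x_i,x_j,y_k)$ for some $k$, plus the analogous conditions among the $\vec y$ themselves (which are either consistent or give $\bot$). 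This formula is finitary and positive in $\Lang'$, since $\notin$ is included.

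\emph{Constructing $\tau$ and $\etau$.} Take $\varphi=\top$, and let $\tau_{q\top}(\vec x,y)$ be ``$\chi_q(\vec x,y)$ and $y$ is $\in$-unrelated to every $x_i$, in either direction.'' Condition (c) holds by \lemref{sflex}. Set $\etau_{q\top Q}(\vec x)$ to be $\chi_Q(\vec x)$ together with the finitary check that $Q$ is consistent with $y$ being $\in$-unrelated to the $\vec x$. Condition (e) then asks that any $\tau$-witness $y$ satisfies $\chi_Q(\vec x,y)$. This should follow because, by the description of $\chi$ above, $\chi_Q(\vec x,y)$ only involves $Q\upto(\vec x,y)$ and $\notin$-facts among $(\vec x,y)$. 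The former is guaranteed to agree with $q$ on $(\vec x, y)$ since $Q$ extends $q$, and the latter are fixed by $\tau$ and by $\chi_Q(\vec x)$.

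The main obstacle is making the quantifier elimination genuinely correct. We must check that, in an arbitrary (nonstandard) model, every consistent $S$-pattern over finitely many elements is realizable purely by iterating \lemref{sflex}. In particular, this requires that the patterns the lemma forbids are exactly those contradicting $x\in y\leftrightarrow\forall_z S(x,y,z)$. Once that is established, \thmref{main_final} yields the computable copy of $D\upto S$.
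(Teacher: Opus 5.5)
Your proposal is correct and follows essentially the paper's route. The paper obtains this theorem as the case $i=0$, $n=1$ of Theorem~\ref{thm:gen_pakhomov_jump_inversion} (there $S^0$ is $\notin$) and applies Theorem~\ref{thm:main_final} with $\Lang'=\Lang\cup\{\in,\notin\}$. It uses the same witnesses you do: $\chi_q$ obtained by adding the $y$-variables one at a time via the flexibility lemma, $\tau_{q\top}(\vec{x},y)$ given by $\chi_q(\vec{x},y)$ together with $y$ being $\in$-unrelated to $\vec{x}$, and $\etau_{q\top Q}=\chi_Q(\vec{x})$. Your extra consistency check in $\etau$ always succeeds, since no finite $S$-type can force an $\in$-relation.

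The one blemish is the aside in your first paragraph claiming $0'$ decides arbitrary definable relations of $D$. That claim is false, but you never use it: with the finite $\Lang'$, $0'$-computability of $D$ already gives c.e.\ (indeed finite) atomic types. The ``main obstacle'' you flag is exactly what \lemref{sflex} together with $x\in y\leftrightarrow\forall_z S(x,y,z)$ settles in any model.
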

\begin{remark}
\label{rmk:sjipakhomov1}\thmref{basic_pakhomov_jump_inversion} is
(up to trivial modification) an example of a Strong Jump Inversion
result in the strong sense of \rmkref{jumpinversion}, because the
structure $(D,\notin,S)$ is a structural jump of $(D,S)$. Indeed,
from the fact that $x\in y\leftrightarrow\forall_{z}S(x,y,z)$ we
conclude that $\notin$ is r.i.c.e.\ in $(D,S)$, and we will establish
in the proof of \thmref{gen_pakhomov_jump_inversion} the following
quantifier elimination result: Any formula $\exists_{\vec{y}}q(\vec{x},\vec{y})$
in the signature containing only $S$ is equivalent in $D$ to a first-order
quantifier-free formula $Q(\vec{x})$ in the signature $(\notin,S)$
containing only positive instances of $\notin$. Thus, with access
to an oracle from $0'$ we can uniformly turn any $\Sigma_{1}^{c}(S)$
formula into an equivalent finitary quantifier-free formula in $(\notin,S)$,
which suffices to show that $(D,\notin,S)$ is a structural jump of
$(D,S)$.
\end{remark}
\thmref{basic_pakhomov_jump_inversion} is enough to establish the
following results of Pakhomov's original paper:
\begin{corollary}[Pakhomov \cite{pakhomov}]
\label{cor:cet0}Every consistent c.e.\  extension $T$ of the theory
of $S$ in $\ZFniptc$ admits a computable model.
\end{corollary}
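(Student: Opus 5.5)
The argument is the one sketched in the introduction, now using \thmref{basic_pakhomov_jump_inversion} as a black box. Let $T$ be a consistent c.e.\ extension of $T_{S}$, the theory of $S$ in $\ZFniptc$. Form $\bar{T}=T\cup\Tboth$, where $\Tboth$ is as in \defref{tboth}. This is a c.e.\ theory in the signature $(\in,S)$, since $\Tboth$ is the deductive closure of a c.e.\ axiom set.

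\textbf{Consistency of $\bar{T}$.} The key observation is that $\Tboth$ is a definitional extension of $T_{S}$. Indeed, $\in$ is $\Tboth$-definable from $S$ via $x\in y\leftrightarrow\forall_{z}S(x,y,z)$, and $T_{S}$ is by definition the set of $S$-consequences of $\Tboth$. Let $\Tboth^{S}$ be the $S$-only translation of $\Tboth$, obtained by replacing every occurrence of $\in$ with its definition. Any $S$-sentence provable from $T\cup\Tboth$ is then already provable from $T$:
\begin{itemize}
\item Each axiom of $\Tboth$ becomes, after translation, an $S$-sentence provable in $\Tboth$, hence a member of $T_{S}\subseteq T$.
\item The definitional axiom for $\in$ becomes a tautology under translation.
\end{itemize}
Equivalently, at the level of models: any model $N\vDash T$ satisfies $T_{S}$, so expanding $N$ by interpreting $\in$ as $\forall_{z}S(x,y,z)$ yields a model of $\Tboth$ (by the translation argument) that still satisfies $T$. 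So $\bar{T}$ is consistent, and this expansion is the main thing to verify carefully.

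\textbf{Building the model.} By the computable completeness theorem relativized to $0'$, the consistent c.e.\ theory $\bar{T}$ has a $0'$-computable model $D$, for instance by a Henkin construction computable in $0'$. Since $D\vDash\Tboth$, \thmref{basic_pakhomov_jump_inversion} applies and gives a computable copy $M$ of $D\upto S$. As $D\vDash T$ and $T$ is an $S$-theory, $M\vDash T$, which completes the proof.

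The only delicate step is the conservativity and expansion argument above. One has to make sure that \thmref{basic_pakhomov_jump_inversion} needs only a model of $\Tboth$, which is exactly what the expansion provides; the hypothesis on $T$ alone would not suffice, since $T_{S}$ by itself does not carry the $\in$-structure the theorem uses.
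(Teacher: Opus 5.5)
Your proof is correct and follows the paper's argument. You combine $T$ with $\ZFniptc$ and the definition of $S$, take a $0'$-computable model of the resulting consistent c.e.\ theory, and apply \thmref{basic_pakhomov_jump_inversion} to obtain a computable copy of its $S$-reduct. Your explicit check that $T\cup\Tboth$ is consistent is right: you expand a model of $T$ by interpreting $\in$ as $\forall_{z}S(x,y,z)$ and use that each translated axiom of $\Tboth$ lies in $T_{S}\subseteq T$. The paper leaves this step implicit.
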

\begin{proof}
The theory $\ZFniptc$ is c.e., and so the theory $T'$ consisting
of $T$, $\ZFniptc$, and the definition of $S$ in terms of $\in$,
is also c.e. Thus, there is a $0'$-computable complete and consistent
extension $T''$ of $T'$, and by applying the relativized computable
completeness theorem (see \cite{harizanov}), there is a $0'$-computable
model of $T''$ and hence of the theory of $(\in,S)$. We then apply
\thmref{basic_pakhomov_jump_inversion} to this model to obtain a
computable copy of its reduct to $S$, which is a model of $T$.
\end{proof}
\begin{corollary}[Pakhomov \cite{pakhomov}]
\label{cor:nonstd}There is a computable nonstandard model of a theory
definitionally equivalent to $\mathsf{PA}$.
\end{corollary}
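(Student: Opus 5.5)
We will obtain the statement from \corref{cet0}, combined with \thmref{pazffineqv}, by the same device the introduction uses for \thmref{pakhomov_answer}: add a constant forcing nonstandardness. First, by \thmref{pazffineqv}, $\mathsf{PA}$ is definitionally equivalent to $\ZFfinptc$, which is $\ZFniptc$ plus the negation of infinity. Since $S$ and $\in$ are interdefinable over $\ZFniptc$ (by the lemma $x\in y\leftrightarrow\forall_{z}S(x,y,z)$ together with the recursive definition of $S$), the theory $T_{\mathrm{fin}}$ of $S$ in $\ZFfinptc$ is definitionally equivalent to $\ZFfinptc$. Here $T_{\mathrm{fin}}$ means the $S$-consequences of $\ZFfinptc$ together with the definition of $S$. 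Definitional equivalence is transitive, as one checks by composing the defining formulas, so $T_{\mathrm{fin}}$ is definitionally equivalent to $\mathsf{PA}$. Moreover, $T_{\mathrm{fin}}$ is c.e.\ and extends $T_{S}$.

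Second, we must produce a computable model of $T_{\mathrm{fin}}$ that is nonstandard. For this, translate ``the $n$-th natural number'' through the interpretation. In $\ZFfinptc$, take the von~Neumann ordinal $\underline{n}$, which is definable by an $\in$-formula $\nu_{n}(x)$, and rewrite it as an $S$-formula $\nu_{n}^{S}(x)$ by replacing each occurrence of $\in$ with $\forall_{z}S(\cdot,\cdot,z)$. Add a fresh constant $c$ together with the axioms $\neg\nu_{n}^{S}(c)$ for every $n$, together with the $S$-translation of ``$c$ is a (finite) ordinal''. The resulting theory $T^{+}$ is c.e.\ and is consistent by compactness, because $\mathsf{PA}$ has nonstandard models. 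Hence $T^{+}$ is consistent in the sense needed.

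Third, apply \corref{cet0}, or directly the argument in its proof via \thmref{basic_pakhomov_jump_inversion}. The constant $c$ is a technical wrinkle, since \thmref{basic_pakhomov_jump_inversion} concerns reducts to $S$ alone. The simplest fix is to not put $c$ in the computable model at all. Instead, take a $0'$-computable model $D$ of $T^{+}\cup\Tboth$, built by the computable completeness theorem relative to $0'$. Then apply \thmref{basic_pakhomov_jump_inversion} to the $(\in,S)$-reduct of $D$, obtaining a computable $M\cong D\upto S$. The isomorphism carries the interpretation of $c$ to some element of $M$, and nonstandardness is an isomorphism invariant. Therefore $M$ is a computable nonstandard model of $T_{\mathrm{fin}}$.

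The main point needing care is the bookkeeping that ``nonstandard'' is meaningful and preserved through the definitional equivalence. The $\mathsf{PA}$-model obtained from $M$ by definitions (apply the $S$-definition of $\in$ to recover $\ZFfinptc$, then apply the Kaye--Wong translation to recover $\mathsf{PA}$) has a nonstandard element exactly when $M$'s set-theoretic reconstruction has a nonstandard finite ordinal. This holds because the Kaye--Wong interpretation sends the numeral $n$ to the $n$-th ordinal, up to the Ackermann coding, with standardness preserved. Everything else is a direct invocation of earlier results.
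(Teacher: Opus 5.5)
Your proposal is correct and is essentially the paper's alternate proof. Both add a constant $c$ forced to be nonstandard, take a $0'$-computable model of the enlarged c.e.\ theory together with the definitions of $\in$ and $S$, apply \thmref{basic_pakhomov_jump_inversion} to its $(\in,S)$-reduct, and note that the computable copy of the $S$-reduct is nonstandard because nonstandardness is an isomorphism invariant. The paper instead states nonstandardness of $c$ directly in the language of arithmetic and adds $\in$ and $S$ as defined symbols over $\mathsf{PA}$, which avoids your final translation paragraph; there, strictly, Kaye--Wong identifies the number $n$ with the set of Ackermann code $n$ rather than with the ordinal $n$, though your conclusion still holds because a nonstandard ordinal equals no standard hereditarily finite set.
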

\begin{proof}
Recall that $\ZFfinptc$ (that is, $\ZFniptc$ plus the negation of
the axiom of infinity) is definitionally equivalent to $\mathsf{PA}$
(see \cite{kaye_wong} for details), and hence the theory of $S$
in $\ZFfinptc$ is as well. Thus, let $T$ be a theory containing
the theory of $S$, plus the translation to the $S$-language of a
statement consistent with $\mathsf{PA}$ not in true arithmetic. By
\corref{cet0}, there is a computable model of $T$.
\end{proof}
The proof of \corref{nonstd} presented above is very close to Pakhomov's
original proof. We present an alternate proof, which is not directly
possible as stated using only Pakhomov's results.
\begin{proof}[\corref{nonstd}]
Let $T$ be the theory in the language of arithmetic, plus an added
constant $c$, containing the theory of $\mathsf{PA}$, plus sentences
saying that $c$ is nonstandard: $c\neq0$, $c\neq S0$, $c\neq SS0$,
and so on. Let $T'$ be the theory $T$, plus predicates $S$ and
$\in$, and axioms defining $S$ and $\in$ in terms of the arithmetical
symbols. Since $T'$ is a c.e.\ theory, it admits a $0'$-computable
model $M$, whose reduct to $(\mathord{\in},S)$ satisfies the assumptions
of \corref{cet0}, and so its reduct to $S$ admits a computable copy.
\end{proof}

\subsection{Generalizing Pakhomov's Construction}

In Pakhomov's original construction, as explained in Section \ref{subsec:pakhomov_original},
the predicate $\in$ is performing two distinct tasks. On the one
hand, $S$ is defined to mean ``set inclusion, with a third argument
that represents a witness in the case where inclusion does not hold'',
with the `witness' part playing a crucial role by being flexible
enough to turn $0'$-computable models into computable models. On
the other hand, the predicate $\in$ is providing the ``definitional
flexibility'' required to allow the entire construction of $S$ to
take place, as we need the theory we're working in to be powerful
enough to quantify over possible prescriptions of $S$ for a hypothetical
extra element, in order to ensure that an element satisfying this
prescription does exist. Upon understanding these roles as separate,
one gets the idea that the construction could be applied to other
predicates, or even applied to itself recursively.

There are a few technical hitches in the way of this idea, most notably
the fact that an implicit ingredient in the construction of $S$ is
that the $\in$-relation is ``rare'', making it easy to construct
elements that are $\in$-incomparable to previous elements, while
conversely the $S$ relation is ``common''. This foreshadows the
idea that, in order to generalize to higher-order ``$S$ relations'',
we will need to introduce a few negations (see \lemref{sneg}).

Another technical matter regards the coefficient $q$ described in
Section \ref{subsec:pakhomov_original}. If we adopt the same exact
strategy described therein, we would need to increase our coefficient
$q$ the more ``iterations of $S$'' we would like to introduce.
By itself, this is not necessarily an issue, but we've found that
it simplifies the definition to adopt a slightly different perspective.
In Section \ref{subsec:pakhomov_original}, we imagine that, for a
given stage $q\alpha$ of the von~Neumann hierarchy, we look at all
possible ways that a new element relates via $S$ to the elements
of $V_{q\alpha}$, and designate someone realizing these possible
relations in $V_{q(\alpha+1)}$. If, instead, we exercise some patience,
we can instead consider that at every stage $V_{\alpha}$ of the von~Neumann
hierarchy, for every possible way for an element to $S$-relate to
the elements of $V_{\alpha}$ we ``schedule'' an element to relate
to them in this manner, though this element might only be actually
added, say, in $V_{\alpha+100}$. In our view, this adds some versatility
and simplifies the construction.

We are now ready to state and prove:
\begin{thm}
\label{thm:nestedpakhomov}There is a nested sequence of consistent
c.e.\ theories $\ZFniptc\subseteq T^{0}\subseteq T^{1}\subseteq T^{2}\subseteq\cdots$
satisfying the following properties:
\begin{itemize}
\item Each $T^{n}$ is in the language containing $\in$ and predicates
$S^{0}$, $\dots$, $S^{n}$, with each $S^{i}$ being an $(i+2)$-ary
predicate symbol,
\item All of these extensions are conservative, in the sense that they contain
no additional theorems in the predicate $\in$,
\item The predicates $\in$ and $S^{n}$ are definable in terms of the other
within $T^{n}$, and
\item Given a $0'$-computable model $D$ of $T^{n}$ restricted to the
language containing $S^{i}$, $\dots$, and $S^{n}$, there is a computable
copy $M$ of $D\upto(S^{i+1},\dots,S^{n})$.
\end{itemize}
The last item relativizes: Given an $X'$-computable model of $T^{n}$
in the last $k$ predicates, there is an $X$-computable copy of its
reduct to the last $k-1$ predicates.
\end{thm}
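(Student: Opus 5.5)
We define the predicates by induction on $n$ inside $\ZFniptc$, each one obtained from the previous one by the construction of Section \ref{subsec:pakhomov_original}. Set $S^{0}$ to be $\in$ itself, which is binary, or a negated form of it as dictated by \lemref{sneg}. Given $S^{n}$, which is $(n+2)$-ary, we define $S^{n+1}$, which is $(n+3)$-ary, by transfinite recursion on the von~Neumann hierarchy (see \rmkref{vonneumann}), with the defining rule
\[
S^{n}(\vec{x})\leftrightarrow\forall_{z}S^{n+1}(\vec{x},z),
\]
where $S^{n}$ may be replaced by its negation according to parity, so that the ``rare'' relation is always the one being universally witnessed.

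At each stage $V_{\alpha}$, for every finite $A\subseteq V_{\alpha}$ and every \emph{good} prescription $SA$ of $S^{n+1}$-relations between a hypothetical new element and $A$, we schedule a fresh element $w(\alpha,A,SA)$, added at some later stage such as $V_{\alpha+c}$. Here $w$ is injective and $w(\alpha,A,SA)$ is unrelated to $A$ at all lower levels. A prescription is good when it agrees with the relation already defined on $A$ and forces $S^{n+1}(\vec{a},w)$ whenever $S^{n}(\vec{a})$ holds. All triples not covered by a scheduled element are declared true. Scheduling in this way, rather than fixing a multiplier $q$, means the same template works at every level $n$.

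Set $T^{n}$ to be $\ZFniptc$ together with the definitions of $S^{0},\dots,S^{n}$. Conservativity is then immediate, and $T^{n}$ is c.e.\ and consistent. We next prove the analogues of the two lemmas of Section \ref{subsec:pakhomov_original} at every level: first the defining equivalence above, by a least-counterexample argument, and second a flexibility lemma generalizing \lemref{sflex}. Iterating the defining equivalence gives mutual definability of $\in$ and $S^{n}$.

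For the jump inversion we apply \thmref{main_final} with $\Lang=\{S^{i+1},\neg S^{i+1},\dots,S^{n},\neg S^{n},=,\neq\}$ and with $\Lang'$ equal to $\Lang$ plus the rare literal of $S^{i}$. Since a $0'$-computable model has a finite signature, the atomic types of tuples are $0'$-computable finite objects, so we obtain a $0'$-computable c.e.-typed copy trivially.

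To construct $\chi_{q}$ we prove effective quantifier elimination in the following form. Any $\exists_{\vec{y}}q(\vec{x},\vec{y})$ with $q$ a quantifier-free $\Lang$-type is equivalent to the $\Lang$-part of $q$ on $\vec{x}$, together with a positive conjunction of rare $S^{i}$-literals among the $\vec{x}$. These literals express exactly that the forced constraints coming from the defining equivalence are not violated. Sufficiency follows from the flexibility lemma, applied one fresh $y$ at a time, since every good prescription is realized by some element unrelated to everything else.

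For $\tau$ we take the domain to be $\{(q,\top)\}$. We let $\tau_{q\top}(\vec{x},y)$ say that $\chi_{q}(\vec{x},y)$ holds and that $y$ is a \emph{generic} witness, that is, an element unrelated at level $i$ to $\vec{x}$ except where $q$ forces a relation. Genericity must be phrased as a $\llorc_{2}$ formula over $\Lang'$. Then $\etau_{q\top Q}$ is $\chi_{Q}(\vec{x})$ conjoined with a syntactic check that $Q$ imposes no level-$i$ rare relation between $y$ and the new variables beyond those allowed. Relativization is routine, and iterating the result $k-1$ times handles the last clause of the statement.

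The main obstacle will be verifying Item \ref{enu:qetp3} of the QETP, namely that any two generic $y$ satisfy the same $\chi_{Q}(\vec{x},y)$. This requires the flexibility lemma to be strong enough to hold uniformly in parameters across all the levels $S^{i+1},\dots,S^{n}$ simultaneously. Getting the negation pattern of \lemref{sneg} right is exactly what should make this work. I would first try to prove a single ``extension lemma'' stating that any finite consistent $\Lang$-prescription over finitely many elements is realized by a fresh element that is generic at level $i$. Both $\chi$ and Item \ref{enu:qetp3} would then follow from that lemma.
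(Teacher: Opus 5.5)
Your plan for the first three bullets matches the paper: take $T^n$ to be $\ZFniptc$ plus the recursive definitions, which gives conservativity, and get mutual definability by iterating the defining equivalence. Applying \thmref{main_final} with $\Lang=\{S^{i+1},\dots,S^n\}$ is also what the paper does. The gap is in how you build $S^{n+1}$.

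\textbf{The gap.} You schedule fresh elements only for prescriptions of $S^{n+1}$, and make them unrelated to $A$ at all lower levels. So, relative to its parameter set, each fresh element has rare literals at a single level. A prescription with rare literals at two levels is therefore never realized. This already breaks Item \ref{enu:qetp1} of the QETP, not only Item \ref{enu:qetp3}.

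Here is a concrete instance with $i=0$, $n=2$.
Let $x_1,x_2$ be non-scheduled sets with $x_1\notin x_2$, so the rare literal at level $0$ (namely $\in$) fails on $(x_1,x_2)$.
Let $y_0$ be a level-$1$ fresh element over $\{x_1,x_2\}$ whose prescription makes the rare literal of $S^1$ hold on $(x_1,x_2,y_0)$.
Let $x_3$ be a level-$2$ fresh element over $\{x_1,x_2,y_0\}$ whose prescription is common everywhere except on $(x_1,x_2,x_3,y_0)$.
Let $x_3'$ be a level-$2$ fresh element over $\{x_1,x_2\}$ with an all-common prescription.

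The triples $(x_1,x_2,x_3)$ and $(x_1,x_2,x_3')$ have the same atomic $\{S^0,S^1,S^2\}$-type. Yet the complete $\Lang$-type $q$ of $(x_1,x_2,x_3,y_0)$ is realized over the first triple and not over the second. A witness $y$ over $(x_1,x_2,x_3')$ would have to be the newest element covering the rare $4$-tuple $(x_1,x_2,x_3',y)$. It cannot be $x_3'$, whose prescription is all-common, and $x_1,x_2$ are not scheduled. So $y$ is a level-$2$ fresh element, and by your construction it is then common at level $1$ with $(x_1,x_2)$, contradicting $q$.

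Hence no $\llorc$ formula $\chi_q$ exists, and the extension lemma you plan to prove is false for your theories. This is not a matter of the negation pattern. It is exactly the case with $\Lang$ spanning several levels that \thmref{pakhomov_answer} needs.

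\textbf{The missing idea: schedule witnesses jointly across all levels.} In the paper, a good sequence $(\alpha,mA,A,S^0A,\dots,S^{n+1}A)$ prescribes every level at once. The new element is $\in$-unrelated to $A$, and whenever the rare literal of $S^j$ is prescribed on $\vec a$, the common literal of $S^{j+1}$ is prescribed on every $(\vec a,b)$. The witness is
$w^{n+1}(\alpha,mA,A,S^0A,\dots,S^{n+1}A)=w^{n}(\alpha,(S^{n+1}A,mA),A,S^0A,\dots,S^nA)$.
So the level-$(n+1)$ witness is literally one of the level-$n$ witnesses, with the higher prescription stored in the metadata slot, and it therefore also realizes the lower-level prescriptions. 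This is \lemref{sflex_general}. With it, the quantifier elimination and Item \ref{enu:qetp3} go through, taking $\tau_{q\top}$ to say that all $S^i$-atoms involving $y$ hold (together with $\chi_q$), and $\etau_{q\top Q}\equiv\chi_Q$.

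\textbf{A separate error: the eliminated literals point the wrong way.} The rare literal of $S^i$ on $\vec z$ is equivalent to $\forall_y$ of the common literal of $S^{i+1}$ on $(\vec z,y)$. So if $q$ demands the rare literal of $S^{i+1}$ on $(\vec z,y)$, the resulting condition is that the \emph{common} literal of $S^i$ holds on $\vec z$. For $i=0$ this is $\notin$, cf.\ \rmkref{sjipakhomov1}. With only the rare literal of $S^i$ in $\Lang'$, $\chi_q$ cannot be written positively. The paper puts both $S^i$ and $\neg S^i$ into $\Lang'$.
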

We dedicate the remainder of this section to the proof of \thmref{nestedpakhomov}.
We start by defining the sequence $\{T_{n}\}_{n\in\N}$ inductively.
For the base case, we set $T^{0}$ to be $\ZFniptc$, plus the axiom
$\forall_{x,y}[S^{0}(x,y)\leftrightarrow(x\notin y)]$. It trivially
(and, in the case of the last bullet point, almost vacuously) satisfies
all required conditions. Now, we suppose that $T^{n}$ has already
been defined, and set out to define $T^{n+1}$.

Within $T^{n}$, we define the ordinal-indexed sequence of predicates
$S_{\alpha}^{n+1}(\vec{x},y)$, where $\vec{x}$ is an $(n+1)$-uple
of variables, in a way similar to the definition of $S_{\alpha}(x,y,z)$
in Section \ref{subsec:pakhomov_original}. The big picture is the
same: We define $S_{\alpha}^{n+1}$ inductively as a compatible sequence
of relations on $V_{\alpha}$ (note the absence of the constant $q$)
with $S_{\lambda}^{n+1}$ having an obvious definition for limit ordinals
$\lambda$, so it suffices to describe the successor step. To assist
us in that regard, we define the following sequence of class functions:

Given sets $A$, $mA$,\footnote{The set $mA$ is not present in the original construction of $S$,
and it is technically not necessary here, but it will save us trouble.
It should be seen as encoding some ``metadata'' on the element $w$
we are about to define, which is essentially used inductively to give
flexibility to higher-order $S^{N}$ predicates.} an ordinal $\alpha$, and sets $S^{i}A\subseteq(A\cup\{\alpha\})^{i+2}$
for $i=0,\dots,n+1$,\footnote{Here, the expression $S^{i}A$ is merely notation (where $i$ is a
finite ordinal and $A$ is a set), intended to evoke the idea that
$S^{i}A$ is defining the relation $S^{i}$ between the elements of
$A$ and a new element. Note that $S^{0}A$ is not the same as $A$.} define an element $w^{n}(\alpha,mA,A,SA,\dots,S^{n}A)$ inductively
in $n$ as follows. We assume the usage of Kuratowski pairs throughout.
\begin{itemize}
\item $w^{0}(\alpha,mA,A)=(\alpha,(A,mA))$, and
\item $w^{n+1}(\alpha,mA,A,S^{0}A,S^{1}A,\dots,S^{n}A,S^{n+1}A)=w^{n}(\alpha,(S^{n+1}A,mA),A,S^{0}A,\dots,S^{n}A)$.
\end{itemize}
The main defining features of this sequence of class functions are:
\begin{itemize}
\item For every $n$, $(\alpha,mA,A,S^{0}A,\dots,S^{n}A)\mapsto w^{n}(\alpha,mA,A,S^{0}A,\dots,S^{n}A)$
is an injective definable class function, and
\item $w^{n}(\alpha,mA,A,S^{0}A,\dots,S^{n}A)$ is \emph{not} in $V_{\alpha}$.
\end{itemize}
Let us go back to defining $S^{n+1}$ inductively, assuming that we
have already defined $S^{0}$ up to $S^{n}$, for $n\geq0$. Assuming
that $S_{\alpha}^{n+1}$ is already defined on tuples of elements
of $V_{\alpha}$, we define a ``good sequence'' $(\alpha,mA,A,S^{0}A,\dots,S^{n+1}A)$
as one that satisfies the following properties:
\begin{itemize}
\item $A$ is a subset of $V_{\alpha}$,
\item For $i=0,\dots,n+1$, the set $S^{i}A$ is a subset of $(A\cup\{\alpha\})^{i+2}$,
\item For $i=0,\dots,n$, the set $S^{i}A\cap A^{i+2}$ coincides with the
relation $S^{i}$ on $A$,
\item The set $S^{n+1}A\cap A^{n+2}$ coincides with the relation $S_{\alpha}^{n+1}$
on $A$,
\item The set $S^{0}A$ contains all pairs $(a,\alpha)$, $(\alpha,a)$,
and $(\alpha,\alpha$), with $a\in A$.
\item For $i=0,\dots,n$, the following condition holds: For every $(i+3)$-uple
$(\vec{a},b)$ of elements of $A\cup\{\alpha\}$, if $\vec{a}\notin S^{i}A$,
then $(\vec{a},b)\in S^{i+1}A$.
\end{itemize}
We are now ready to define $S_{\alpha+1}^{n+1}$. Given an $(n+3)$-uple
$\vec{x}$ of elements in $V_{\alpha+1}$,
\begin{itemize}
\item If all elements of the tuple are in $V_{\alpha}$, set $S_{\alpha+1}^{n+1}(\vec{x})\equiv S_{\alpha}^{n+1}(\vec{x})$,
\item If there is a good sequence $(\beta,mA,A,S^{0}A,\dots,S^{n+1}A)$,
for some $\beta\leq\alpha$, such that the element $w=w^{n+1}(\beta,mA,A,S^{0}A,\dots,S^{n+1}A)$
is in $V_{\alpha+1}$, and every entry of $\vec{x}$ is an element
of $A\cup\{w\}$, set $S_{\alpha+1}^{n+1}(\vec{x})$ to hold true
if, and only if, upon replacing every instance of $w$ in the tuple
$\vec{x}$ by $\beta$, the resulting tuple is in $S^{n+1}A$.

In the sequence, when such a choice of good sequence is clear from
context, denote by $\vec{x}^{*}$ the operation of replacing every
entry of $\vec{x}$ that equals $\beta$ by $w$, and let $\vec{x}_{*}$
denote the inverse operation. Thus, the definition of $S_{\alpha+1}^{n+1}$
in this case can be reworded as: $S_{\alpha+1}^{n+1}(\vec{x})\equiv[\vec{x}^{*}\in S^{n+1}A]$.

Remark: Note that, by definition, if $(\beta,mA,A,S^{0}A,\dots,S^{n+1}A)$
is a good sequence and $n\geq0$, we also have that $(\beta,(S^{n+1}A,mA),A,S^{0}A,\dots,S^{n}A)$
is also a good sequence (for a smaller value of $n$), and thus the
element $w=w^{n+1}(\beta,mA,A,S^{0}A,\dots,S^{n+1})=w^{n}(\beta,(S^{n+1}A,mA),A,S^{0}A,\dots,S^{n}A)$
will also satisfy: $S^{n}(\vec{x})\equiv[\vec{x}^{*}\in S^{n}A]$
for all tuples of elements of $A\cup\{w\}$, and inductively for lower
indices. The case of $S^{0}$ is an edge case, but the definition
of good sequence still ensures that $S^{0}(x,y)\equiv[(x,y)^{*}\in S^{0}A]$.
\item For every other triplet of elements from $V_{\alpha+1}$, set $S_{\alpha+1}^{n+1}(\vec{x})$
to hold true.
\end{itemize}
Note that the first and second item in the definition do not contradict
each other: If a tuple $\vec{x}$ fits both bullet points, then either
all its elements are in $A$, in which case the definition of good
sequence ensures that there is agreement between both possible definitions
of $S_{\alpha+1}^{n+1}(\vec{x})$, or one of its elements is $w=w^{n+1}(\beta,mA,A,\dots,S^{n+1}A)$,
in which case $S_{\gamma}^{n+1}(\vec{x})$ would have been defined
to agree with the second bullet point for $\gamma$ the smallest ordinal
such that $w\in V_{\gamma}$.

This induces a well-defined relation $S^{n+1}$, whose main defining
properties are as follows:
\begin{lemma}
\label{lem:sneg}Provably in $\ZFniptc$, $\neg S^{n}(\vec{x})$ iff
$\forall_{y}S^{n+1}(\vec{x},y)$.
\end{lemma}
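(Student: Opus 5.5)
We prove both directions following the proof of the analogous lemma for Pakhomov's original $S$ in Section~\ref{subsec:pakhomov_original}, working in $T^{n}$ (i.e.\ in $\ZFniptc$ with $S^{0},\dots,S^{n}$ already defined). Here $\vec{x}$ is an $(n+2)$-tuple, and we argue by induction on $n$, so that the lemma may be assumed for all lower levels. For $n=0$ the statement reads $x\in y$ iff $\forall_{z}S^{1}(x,y,z)$, and the argument below covers it with $\neg S^{0}$ meaning $\in$.

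\emph{($\Rightarrow$)} Suppose $\neg S^{n}(\vec{x})$ but $\neg S^{n+1}(\vec{x},y)$ for some $y$. Choose the least $\alpha$ for which some such $\vec{x},y\in V_{\alpha}$ exist. We will check that $\alpha$ is a successor $\gamma+1$: limits are unions, so agreement with earlier stages gives this. The value $S^{n+1}_{\gamma+1}(\vec{x},y)$ must then be false, and the default clause sets it true. By minimality it is not fully inherited from $V_{\gamma}$, so it comes from the second clause. That is, there is a good sequence $(\beta,mA,A,S^{0}A,\dots,S^{n+1}A)$ with $w=w^{n+1}(\beta,\dots)\in V_{\gamma+1}$, every entry of $(\vec{x},y)$ in $A\cup\{w\}$, and $(\vec{x},y)_{*}\notin S^{n+1}A$.

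By the Remark following the definition (the lower-index predicates at $w$ are also governed by the same good sequence), we have $S^{n}(\vec{x})\equiv[\vec{x}_{*}\in S^{n}A]$, so $\vec{x}_{*}\notin S^{n}A$. This also holds when all of $\vec{x}$ lies in $A$, since the sequence agrees with $S^{n}$ on $A$. The last goodness condition, with $i=n$, then forces $(\vec{x}_{*},y_{*})\in S^{n+1}A$, a contradiction. The point to handle carefully is that the Remark's reduction to $w^{n}$ really yields a good sequence at level $n$. It does, because every condition in the definition of goodness for indices $\le n$ is inherited, with $S^{n}$ now playing the role of $S^{n}_{\alpha}$.

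\emph{($\Leftarrow$)} Suppose $S^{n}(\vec{x})$ holds; we must produce $y$ with $\neg S^{n+1}(\vec{x},y)$. Take $A$ to be the set of entries of $\vec{x}$, and choose $\beta$ large enough that $A\subseteq V_{\beta}$. Let each $S^{i}A$ for $i\le n$ consist of the true relation $S^{i}$ restricted to $A$, together with the minimal additions forced by goodness:
\begin{itemize}
\item all pairs involving $\beta$ in $S^{0}A$;
\item for each level, all tuples $(\vec{a},b)$ with $\vec{a}\notin S^{i}A$, added in increasing order of $i$.
\end{itemize}
Let $S^{n+1}A$ be $S^{n+1}_{\beta}\cap A^{n+3}$ together with all tuples $(\vec{a},b)$ with $\vec{a}\notin S^{n}A$, but excluding $(\vec{x},\beta)$. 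This exclusion is allowed because $\vec{x}\in S^{n}A$, as $S^{n}(\vec{x})$ holds and $\vec{x}\subseteq A$. Set $mA=\emptyset$, and verify that this sequence is good.

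Let $w=w^{n+1}(\beta,mA,A,S^{0}A,\dots,S^{n+1}A)$, and let $\alpha+1$ be the least successor stage with $w\in V_{\alpha+1}$. At that stage the second clause gives $S^{n+1}(\vec{x},w)\equiv[(\vec{x},\beta)\in S^{n+1}A]$, which is false. Hence $y=w$ witnesses $\exists_{y}\neg S^{n+1}(\vec{x},y)$.

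\emph{Main obstacle.} I expect the main obstacle to be the bookkeeping in showing that the second clause is well defined and actually governs all tuples involving $w$. This means checking:
\begin{itemize}
\item that two different good sequences never produce the same $w$, which follows from injectivity of $w^{n+1}$;
\item that a tuple cannot fall under two good sequences with different $w$'s in a conflicting way, since such a tuple contains at most one new element $w$ unless it lies in the relevant $A$;
\item that the value fixed at the first stage where $w$ appears is the one that persists.
\end{itemize}
The paper's note after the definition already sketches these points, so I would cite it and give only the minimality argument explicitly.
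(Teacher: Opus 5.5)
Your proposal is correct and is essentially the paper's intended argument. The paper states this lemma without proof, relying on the template of its proof of $x\in y\leftrightarrow\forall_z S(x,y,z)$ for Pakhomov's original $S$: a minimal counterexample together with the goodness condition for the forward direction, and an explicit good sequence whose $w$ satisfies $(\vec{x},\beta)\notin S^{n+1}A$ for the converse. Your proof generalizes that template faithfully. The one step your phrase ``verify that this sequence is good'' glosses over is that the forced additions must not alter $S^{i+1}A\cap A^{i+3}$; this is exactly the forward direction at levels $i\le n$, which you have from your induction on $n$ and from the direction you prove first.
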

\begin{lemma}
\label{lem:sflex_general}Let $M$ be a model of $\ZFniptc$. Let
$A$ be a finite subset of $M$, and suppose that we wish to find
an element $w$ that relates to all elements of $A$ with regard to
the predicates $S^{0},\dots,S^{n+1}$ in a prescribed manner. The
following is a sufficient condition for there to exist an element
$w$ of $M$ satisfying this prescription: For every $0\leq i\leq n$,
the prescription satisfies the rules
\begin{itemize}
\item The prescription agrees with the pre-existing relations on tuples
that don't contain any instance of $w$,
\item For all $x\in A\cup\{w\}$, it is prescribed that $S^{0}(x,w)$ and
$S^{0}(w,x)$.
\item For all $(\vec{x},y)\in(A\cup\{w\})^{i+2}$ such that $\neg S^{i}(\vec{x})$
is prescribed, it is also prescribed that $S^{i+1}(\vec{x},y)$.
\end{itemize}
These two properties are enough to establish:
\end{lemma}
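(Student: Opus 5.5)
The plan is to realize the prescribed element directly as a value of the class function $w^{n+1}$, by packaging the prescription into a good sequence. Since $A$ is finite, the axiom of transitive closure (see \rmkref{vonneumann}) gives an ordinal $\alpha$ with $A\subseteq V_{\alpha}$. For $i=0,\dots,n+1$, let $S^{i}A\subseteq(A\cup\{\alpha\})^{i+2}$ be the set of tuples $\vec{x}_{*}$, where $\vec{x}$ ranges over the tuples from $A\cup\{w\}$ for which the prescription demands $S^{i}(\vec{x})$, and $\vec{x}_{*}$ replaces $w$ by $\alpha$. This set exists in $M$ because it is a finite set from the metatheory's perspective (compare \lemref{sflex}). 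Take $mA=\emptyset$ and put $w:=w^{n+1}(\alpha,mA,A,S^{0}A,\dots,S^{n+1}A)$.

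The first step is to verify that $(\alpha,mA,A,S^{0}A,\dots,S^{n+1}A)$ is a good sequence. The conditions $A\subseteq V_{\alpha}$ and $S^{i}A\subseteq(A\cup\{\alpha\})^{i+2}$ hold by construction. Agreement of $S^{i}A\cap A^{i+2}$ with $S^{i}$ on $A$ (with $S_{\alpha}^{n+1}$ for the top level) is the first rule. Here I will note that $S^{n+1}$ and $S_{\alpha}^{n+1}$ agree on $V_{\alpha}$, because the stages form a compatible sequence. The $S^{0}A$ condition is the second rule, and the closure condition ``$\vec{a}\notin S^{i}A\Rightarrow(\vec{a},b)\in S^{i+1}A$'' is exactly the third rule transported along $\vec{x}\mapsto\vec{x}_{*}$. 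By the Remark following the definition of $S_{\alpha+1}^{n+1}$, each truncation $(\alpha,(S^{i+1}A,\dots),A,S^{0}A,\dots,S^{i}A)$ is then a good sequence at level $i$, and it produces the same element $w$ via $w^{i}$.

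Next I check that $w$ really satisfies the prescription. Since $w\notin V_{\alpha}$, we have $w\notin A$. Let $\gamma$ be least with $w\in V_{\gamma+1}$. At that stage the second bullet of the definition of $S_{\gamma+1}^{i}$ applies to every tuple from $A\cup\{w\}$ and yields $S^{i}(\vec{x})\iff\vec{x}_{*}\in S^{i}A$, which is the prescription, for each $1\le i\le n+1$. For $i=0$, where $S^{0}$ is $\notin$, I verify by hand that $w$ is $\in$-unrelated to $A\cup\{w\}$. First, $w\notin w$ by foundation. Second, $w\notin a$ for $a\in A$, by rank. Third, the elements of $w=(\alpha,\dots)$ are $\{\alpha\}$ and $\{\alpha,(A,mA)\}$, which have rank $>\alpha$ and so are not in $A$. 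The same rank argument covers the inductive unfolding of $w^{n+1}$ into $w^{0}$, since it only changes the $mA$ slot.

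I expect the main obstacle to be well-definedness: a tuple from $A\cup\{w\}$ must not also fall under the second bullet via some \emph{other} good sequence $(\beta,mB,B,\dots)$ giving a conflicting value. I intend to handle this with injectivity of $w^{n+1}$. If $w=w^{n+1}(\beta,mB,B,\dots)$, then all data coincide, so $\beta=\alpha$ and $B=A$. If instead the tuple contains some other new element $w'$, then it is not a tuple from $A\cup\{w\}$. I also need the already-noted consistency between the first and second bullets for tuples inside $V_{\alpha}$. Finally, this verification at level $n+1$ presupposes the lemma's analogues for lower levels (that $S^{i}$, $i\le n$, behaves as stated on $A\cup\{w\}$). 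So I would organize the argument as an induction on $n$, mirroring the recursive definition of $w^{n}$.
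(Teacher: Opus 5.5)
Your proposal is correct. It is the argument the paper intends: the paper states this lemma without proof, and, as for the original flexibility lemma, defers to ``the definition plus finite sets being represented in $M$''. Concretely, you encode the prescription as a good sequence with $A\subseteq V_{\alpha}$, take $w=w^{n+1}(\alpha,mA,A,S^{0}A,\dots,S^{n+1}A)$, use the remark on truncated good sequences for $S^{1},\dots,S^{n+1}$, and check $S^{0}$ by rank.

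One small repair is needed in your well-definedness paragraph. Consider a competing good sequence $(\beta',mB,B,\dots)$ with new element $w'\neq w$. Its $w'$ may lie in $A$, and it could cover a tuple containing $w$ by having $w\in B$, so your case split is incomplete. The correct exclusion uses the first stage $\gamma+1$ with $w\in V_{\gamma+1}$. There $B\subseteq V_{\beta'}\subseteq V_{\gamma}$, while $w\notin V_{\gamma}$. Hence any good sequence covering a tuple containing $w$ at that stage has new element $w$ itself. Injectivity of $w^{n+1}$ then finishes the argument; this uses that $S^{0}A$ is determined by goodness, since $w^{n+1}$ does not encode it.
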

\begin{thm}
\label{thm:gen_pakhomov_jump_inversion}Let $D$ be an $X'$-computable
model of $T^{n}$ in the predicates $S^{i},\dots,S^{n}$, with $i<n$.
Then, the reduct $D\upto(S^{i+1},\dots,S^{n})$ admits an $X$-computable
copy.
\end{thm}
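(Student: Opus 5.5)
The plan is to apply \thmref{main_final}, relativized to $X$, with $\Lang$ consisting of $S^{i+1},\dots,S^{n}$ together with their negations and $=,\neq$, and $\Lang'$ consisting of $\Lang$ together with $S^{i}$ and $\neg S^{i}$. Clause \ref{enu:qetp0} holds by construction. We first have to turn the $X'$-computable model $D$ into an $X'$-computable c.e.-typed structure over $\Lang'$. This should be immediate, because $\Lang'$ is finite. With oracle $X'$ we decide the full atomic $\Lang'$-diagram of each finite tuple, and we hardcode it as a finite c.e.\ index (in fact a strong one).

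The substance lies in checking the QETP, which amounts to a quantifier elimination followed by a genericity argument; in both we work inside a model of $T^{n}$ using \lemref{sneg} and \lemref{sflex_general}.

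\emph{Construction of $\chi_q$.} Take a quantifier-free $\Lang$-type $q(\vec x,\vec y)$ of distinct elements (\rmkref{assumeq}), and add the witnesses $y_1,\dots,y_k$ one at a time. To add $y_j$, we apply \lemref{sflex_general} to the finite set $A$ of elements already present, with the prescription that $y_j$ takes the $S^{i+1},\dots,S^{n}$ values dictated by $q$ and the $S^{0},\dots,S^{i}$ values chosen freely, subject to the closure rules. The obstructions are the rules ``$\neg S^{m}(\vec u)$ forces $S^{m+1}(\vec u,y)$''. For $m\ge i+1$ they are constraints internal to $q$, so $q$ is either consistent with them (making $\chi_q$ equal to the $\vec x$-part of $q$) or inconsistent (making $\chi_q\equiv\bot$). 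The only dependence on $\vec x$ arises at level $m=i$: whenever $q$ demands $\neg S^{i+1}(\vec u,y)$ with $\vec u\subseteq\vec x$, we need $S^{i}(\vec u)$. For tuples $\vec u$ that involve some $y$'s, we may choose $S^{i}$ on them freely, and \lemref{sflex_general} lets us set it true whenever necessary.

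So $\chi_q(\vec x)$ will be the $\vec x$-part of $q$ conjoined with finitely many atoms $S^{i}(\vec u)$. This is a positive finitary $\Lang'$-formula, hence lies in $\llorc$. The condition from \rmkref{weakerchi} is then arranged as in that remark.

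\emph{Construction of $\tau$ and $\etau$.} We let the domain of $\tau$ consist of the pairs $(q,\top)$. We take $\tau_{q\top}(\vec x,y)$ to be $\chi_q(\vec x,y)$ conjoined with $\neg S^{i}(\vec u)$ for every tuple $\vec u$ from $\vec x y$ containing $y$ for which this is consistent with $q$, meaning $q$ contains no $\neg S^{i+1}(\vec u,v)$ atom. This is the ``most generic'' choice, and it makes the element as flexible as possible for later extensions, in the spirit of the trash elements of Pakhomov's construction. It is a positive finitary $\Lang'$-formula, since $\neg S^{i}\in\Lang'$, so $\tau$ is even computable. Existence of a witness (\ref{enu:qetp2}) follows from \lemref{sflex_general}. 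We choose $S^{i}$ on tuples containing $y$ as above, and choose $S^{0},\dots,S^{i-1}$ so that the closure rules hold. For example, we make $S^{m}$ true everywhere on new tuples for $m<i$, which is admissible because only negative $S^{m}$ forces anything.

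For $\etau_{q\top Q}$, we set it to $\chi_Q(\vec x)$ together with a syntactic check that $Q$ demands no $\neg S^{i+1}(\vec u,w)$ with $\vec u\ni y$ among the tuples on which $\tau$ imposed $\neg S^{i}$. If that check fails we set $\etau\equiv\bot$.

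\emph{Main obstacle: clause \ref{enu:qetp3}.} We must show that any $y$ satisfying $\tau_{q\top}$ also satisfies $\chi_Q(\vec x,y)$ once the check passes. The intended argument is that, by our computation of $\chi_Q$, the formula $\chi_Q(\vec x,y)$ demands of $(\vec x,y)$ only their $\Lang$-type and finitely many positive $S^{i}$ atoms. The $\Lang$-type is fixed by $\chi_q$. The $S^{i}$ atoms on tuples not containing $y$ come from $\chi_Q(\vec x)$. For the atoms on tuples containing $y$, $\tau$ has fixed them to be false exactly where this is consistent with $q$, and the check in $\etau$ guarantees that $Q$ never requires any of those to be true. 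The remaining $S^{i}$ atoms on $y$-tuples are forced true by $q$ through the $S^{i+1}$-rule, so every $\tau$-witness already has them.

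The delicate points are these. First, we must confirm that $\chi_Q$ really never constrains $S^{i}$ on tuples containing $y$ in any other way, for instance through the lower predicates $S^{0},\dots,S^{i-1}$. These do not belong to $\Lang'$, so they must be shown to impose no constraint at all, using the genericity in \lemref{sflex_general}. Second, we must handle $i=0$, where $\neg S^{0}$ is $\in$ and the rule that $S^{0}(x,w)$ holds for new $w$ interacts with the choice of $\neg S^{i}$ in $\tau$. If the simple $\tau$ fails here, I would weaken it to impose $\neg S^{i}$ only on tuples of the form $(\vec u,y)$ with $\vec u\subseteq\vec x$, and redo the check.

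Once the QETP is verified, the relativized form of \thmref{main_final} yields the $X$-computable copy.
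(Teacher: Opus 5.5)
There is a genuine gap: your $\tau$ has the wrong polarity, and with it clause~\ref{enu:qetp2} of the QETP fails. By \lemref{sneg}, $\neg S^{i}(\vec u)$ is equivalent to $\forall_{v}S^{i+1}(\vec u,v)$. So a \emph{false} $S^{i}$-atom constrains every other element of $D$. It is the least generic choice, not the most, and it is often unrealizable.

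Suppose $i\geq1$ and $\vec u'$ is a subtuple of $\vec x$ with $D\vDash\neg S^{i-1}(\vec u')$. Then \lemref{sneg} gives $S^{i}(\vec u',y)$ for \emph{every} $y$. Yet your $\tau_{q\top}$ demands $\neg S^{i}(\vec u',y)$ whenever $q$ contains no atom $\neg S^{i+1}(\vec u',y,v)$. Concretely, take $i=1$ and $x_{1}\in x_{2}$. A generic $y$ from \lemref{sflex_general}, with all atoms involving $y$ true, realizes such a $q$. Hence $\chi_{q}(\vec x)$ holds while $\exists_{y}\tau_{q\top}(\vec x,y)$ fails.

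Your remark that one may set $S^{m}$ true on new tuples for $m<i$ overlooks one point. A false $S^{i-1}$-atom on an \emph{old} tuple already forces new $S^{i}$-atoms. For $i=0$ matters are worse. There $\neg S^{0}$ is $\in$, so you would be demanding e.g.\ $y\in y$, and \lemref{sflex_general} only ever yields $\in$-isolated witnesses. Your fallback, negating only on tuples $(\vec u,y)$ with $\vec u\subseteq\vec x$, keeps exactly the problematic family.

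The repair, which is what the paper does, is to flip the polarity. Let $\tau_{q\top}(\vec x,y)$ assert $\chi_{q}(\vec x,y)$ together with \emph{every} $S^{i}$-atom on tuples from $(\vec x,y)$ that contain $y$. Do this rather than restricting to tuples on which $\neg S^{i}$ happens to be achievable. A true $S^{i}$-atom constrains no other element. This is exactly the generic prescription you already use when building $\chi_{q}$, so witnesses exist by \lemref{sflex_general}.

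Then $\etau_{q\top Q}\equiv\chi_{Q}(\vec x)$ works with no syntactic check. Clause~\ref{enu:qetpetau} follows from the same elimination argument. Clause~\ref{enu:qetp3} becomes immediate, because $\chi_{Q}(\vec x,y)$ consists only of three kinds of demands, each already supplied:
\begin{itemize}
\item the $\Lang$-type of $(\vec x,y)$, supplied by $\chi_{q}(\vec x,y)$;
\item positive $S^{i}$-atoms on subtuples of $\vec x$, supplied by $\chi_{Q}(\vec x)$;
\item positive $S^{i}$-atoms involving $y$, supplied by $\tau$.
\end{itemize}
The rest of your plan matches the paper: the choice of $\Lang\subseteq\Lang'$, the c.e.-typed copy, and the construction of $\chi_{q}$.
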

\begin{remark}
The same reasoning used in \rmkref{sjipakhomov1} will show that \thmref{gen_pakhomov_jump_inversion}
is an example of a Strong Jump Inversion result in the strong sense
of \rmkref{jumpinversion}. More precisely, that reasoning shows that
the structure $(D,S^{i},\dots,S^{n})$ is a structural jump of $(D,S^{i+1},\dots,S^{n})$.
\end{remark}
\begin{proof}[\thmref{gen_pakhomov_jump_inversion}]
We will apply \thmref{main_final} to $D$ which, as an $X'$-computable
model over a finite language, is also an $X'$-computable $X$-c.e.-typed
model. We need only establish that $D$ has the QETP over $\Lang=(S^{i+1},\dots,S^{n})$
(plus negations) and $\Lang'=(S^{i},\dots,S^{n})$ (plus negations).
\global\long\def\abs#1{\left|#1\right|}%

We construct $\chi$, $\tau$, and $\etau$:
\begin{itemize}
\item First, we construct $\chi_{q}$. We assume without loss of generality
that $q$ is a quantifier-free type all whose variables are distinct
(see \rmkref{assumeq}).

We start by considering the case where $q(\vec{x},y)$ only has one
$y$-variable. In this case, the formula $\exists_{y}q(\vec{x},y)$
is true if, and only if, there exists an element $y$ satisfying a
certain prescription of predicates $S^{i+1},\dots,S^{n}$. We show
that the following three statements are equivalent:
\begin{itemize}
\item $P(\vec{x})\equiv\exists_{y}q(\vec{x},y)$
\item $Q(\vec{x})$: For every instance of $\neg S^{j+1}(\vec{z}_{0},z_{1})$
in $q(\vec{x},y)$:
\begin{itemize}
\item If $j>i$, we require that $S^{j}(\vec{z}_{0})$ be in $q$, and\footnote{Note: This demand is happening outside of first-order logic. Rather,
it's a check that we metatheoretically make of $q(\vec{x},y)$, and
if this check is failed, we set $Q(\vec{x})\equiv\bot$.}
\item If $j=i$, $z_{1}\equiv y$, and $\vec{z}_{0}$ is a subtuple of $\vec{x}$,
we require $S^{i}(\vec{z}_{0})$,
\end{itemize}
Moreover, we also demand $(q\upto\vec{x})(\vec{x})$, i.e.\ whatever
demand $q$ makes of the tuple $\vec{x}$, we also demand it.
\item $R(\vec{x})\equiv$ There is a $y$ such that $q(\vec{x},y)$ and
the relations $S^{0}$ to $S^{i}$ hold for all tuples including elements
from $(\vec{x},y)$ and at least one instance of $y$$)$.
\end{itemize}
Here is the proof. First, $P(\vec{x})\rightarrow Q(\vec{x})$ follows
from the fact that $\forall_{\vec{z}_{0}}(\exists_{z_{1}}\neg S^{j+1}(\vec{z}_{0},z_{1}))\rightarrow S^{j}(\vec{z}_{0})$
(and the assumption that $q$ is a type). Second, $Q(\vec{x})\rightarrow R(\vec{x})$
follows by \lemref{sflex_general} as follows: $R(\vec{x})$ is true
if we can find an element $w$ that relates to the elements of $\vec{x}$
in a certain manner. This manner consists of using $q$ to dictate
$S^{i+1}$ to $S^{n}$, and setting all $S^{0}$ to $S^{i}$ as true
for tuples containing $w$ and whatever-it-already-is for tuples that
do not. Finally, $R(\vec{x})$ obviously implies $P(\vec{x})$.

Now, still in the case where there is only one $y$-variable, we set
$\chi_{q}(\vec{x})\equiv Q(\vec{x})$ as above. We will now handle
the case where there are two $y$-variables, say $y_{1}$ and $y_{2}$,
and an obvious induction will handle the general case. We will use
the formulas $P$, $Q$, and $R$ constructed above for different
formulas $q$. We will use a subscript to indicate which formula $q$
is being used.

Let $q(\vec{x},y_{1},y_{2})$ be a quantifier-free $\Lang$-type.
Then, let us look at $\chi_{q}(\vec{x},y_{1})$. As above, it's either
$\bot$ (if the first bullet point of the definition of $Q_{q}(\vec{x},y_{1})$
fails), in which case the quantifier elimination is trivial, or otherwise
it consists of $q\upto(\vec{x},y_{1})$, together with a few \emph{positive}
requirements of type $S^{i}(\vec{z}_{0})$ with $\vec{z}_{0}$ a subtuple
of $(\vec{x},y_{1})$. Let $q'(\vec{x},y_{1})$ be $q\upto(\vec{x},y_{1})$
together with those requirements that include $y_{1}$, and $q^{*}(\vec{x})$
be the remainder of these requirements. Note that
\[
\exists_{y_{1}}\exists_{y_{2}}q(\vec{x},y_{1},y_{2})\leftrightarrow\exists_{y_{1}}\chi_{q}(\vec{x},y_{1})\leftrightarrow q^{*}(\vec{x})\land\exists_{y_{1}}q'(\vec{x},y_{1}),
\]
and crucially that, since all $S^{i}$-requirements made by $q'$
are positive, $\exists_{y_{1}}q'(\vec{x},y_{1})$ is logically between
$P_{q'}(\vec{x})$ and $R_{q'}(\vec{x})$, and hence equivalent to
both, and hence equivalent to $Q_{q'}(\vec{x})$. Thus, we conclude
$\exists_{\vec{y}}q(\vec{x},\vec{y})\leftrightarrow q^{*}(\vec{x})\land Q_{q'}(\vec{x})$,
and so we define the latter as $\chi_{q}$ in this scenario. The induction
continues in a similar fashion, in the event that there are more $y$-variables,
with the essential point being that every $S^{i}$-demand being made
at any step is positive.
\item Now, we construct $\tau$. Given $q(\vec{x},y,\vec{z})$, we set $\tau_{q\top}(\vec{x},y)$
to be the demand that every $S^{i}$-relation including $y$ at least
once holds, and that $\chi_{q}(\vec{x})$ holds.
\item Finally, $\etau_{q\top Q}(\vec{x})$ is simply $\chi_{Q}(\vec{x})$.
\end{itemize}
The proof that these formulas are witnesses to the QETP is simple
and follows the same $P$, $Q$, $R$ reasoning used to establish
that $\chi_{q}(\vec{x})\leftrightarrow\exists_{y}q(\vec{x},y)$.
\end{proof}
\thmref{nestedpakhomov} immediately follows.

As explained in the introduction, we obtain as a corollary:
\begin{thm}
\label{thm:pakhomov_answer}For every $n$, there is a theory definitionally
equivalent to ``$\mathsf{PA}$ plus all true $\Pi_{n}$ sentences''
that admits a computable non-standard model.
\end{thm}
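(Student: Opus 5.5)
The statement is the one already proved in the introduction, now placed after \thmref{nestedpakhomov} and \thmref{gen_pakhomov_jump_inversion}. I will follow the introduction's argument: translate the problem into the set-theoretic setting, build a sufficiently high-jump model there, and strip off the predicates one jump at a time.

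Fix $n$ and let $T_{\mathrm{arith}}$ be $\mathsf{PA}$ plus all true $\Pi_n$ sentences. Since the true $\Pi_n$ sentences form a $0^{(n)}$-computable set, $T_{\mathrm{arith}}$ is $0^{(n)}$-c.e. By \thmref{pazffineqv} (Kaye--Wong), $\mathsf{PA}$ is definitionally equivalent to $\ZFfinptc$, via explicit translations in both directions. Translating each sentence of $T_{\mathrm{arith}}$ gives a $0^{(n)}$-c.e.\ set of $\in$-sentences. I then take
\[
\bar{T}=T^{n+1}\cup\{\neg\mathsf{inf}\}\cup\{\text{translations of }T_{\mathrm{arith}}\},
\]
with $T^{n+1}$ from \thmref{nestedpakhomov}. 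This theory is consistent because $T^{n+1}$ is conservative over $\ZFniptc$ and the standard model $V_\omega$ satisfies the $\in$-part. Since $\in$ and $S^{n+1}$ are mutually definable in $T^{n+1}$, the restriction $\bar{T}\upto S^{n+1}$ is definitionally equivalent to $\bar{T}\upto\in$, and hence to $T_{\mathrm{arith}}$. I expect this definitional-equivalence bookkeeping to be the main obstacle. One must check that composing the Kaye--Wong equivalence with the $\in\leftrightarrow S^{n+1}$ definitions really produces a common definitional extension. This reduces to the remark following \thmref{nestedpakhomov}, applied inside $\ZFfinptc$.

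Next I add a constant $c$ together with the axioms $c\neq\underline{k}$ for each numeral $k$, translated to sets. These axioms are finitely satisfiable, so the result $\bar{T}^{+}$ is still $0^{(n)}$-c.e.\ and consistent. By the relativized computable completeness theorem, $\bar{T}^{+}$ has a $0^{(n+1)}$-computable model $D$. I take this model in the finite signature $(\in,S^0,\dots,S^{n+1},c)$, or equivalently in $(S^0,\dots,S^{n+1})$ with $c$ carried along. To keep the signature relational, I handle $c$ either as a unary predicate or by discarding it after choosing a nonstandard element; nonstandardness is a property of the structure and survives taking isomorphic copies.

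Then I apply \thmref{gen_pakhomov_jump_inversion}, the relativized last item of \thmref{nestedpakhomov}, successively with $X=0^{(n)},0^{(n-1)},\dots,0$. The first application turns a $0^{(n+1)}$-computable copy of the $(S^0,\dots,S^{n+1})$-reduct into a $0^{(n)}$-computable copy of the $(S^1,\dots,S^{n+1})$-reduct. Each later step drops the lowest predicate and lowers the oracle by one jump. After $n+1$ steps I obtain a computable copy of $D\upto S^{n+1}$, which is a model of $\bar{T}\upto S^{n+1}$. This copy is nonstandard: $D$ contains an element that is nonstandard in the sense of the interpreted arithmetic, and the isomorphic copy inherits such an element. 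So $\bar{T}\upto S^{n+1}$ is the required theory.
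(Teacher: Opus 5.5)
Your proposal is correct and follows the paper's proof essentially step for step. You translate via Kaye--Wong into a $0^{(n)}$-c.e.\ extension of $T^{n+1}$, add a constant required to be nonstandard, take a $0^{(n+1)}$-computable model by relativized computable completeness, and apply the relativized jump inversion $n+1$ times down to the $S^{n+1}$-reduct. Your extra care about consistency (conservativity plus $V_\omega$) and about the constant $c$ only fills in details the paper leaves implicit. On $c$, discarding it before the jump inversions is the right choice, since carrying it as a unary predicate is not covered by \thmref{gen_pakhomov_jump_inversion} as stated.
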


\section{Further Questions}

In \thmref{gen_pakhomov_jump_inversion}, we were able to get the
complexity of models down by ``one jump'', turning a $0'$-computable
model of a c.e.\ theory into a computable model. However, a c.e.\ theory
admits models with complexity far below $0'$. We wonder if this could
be exploited to obtain stronger results.
\begin{question}
Is there something else that can be said beyond \thmref{gen_pakhomov_jump_inversion}
and \thmref{main_final} in the assumption that we are given, say,
a low model instead of a merely $0'$-computable model?
\end{question}
\medskip{}

As always with this kind of work, one may wonder whether there exist
less artificial theories or predicates that could have filled the
role of our predicates $S^{1}$, $S^{2}$, etc. In keeping with \cite{lutz_walsh_tennenbaum},
we define a `natural' theory or predicate as one that has been studied
by mathematicians who are not logicians.
\begin{question}
\label{question:natural1}Does there exist a natural theory that is
definitionally equivalent to $\mathsf{PA}$ and admits a non-standard
computable model?
\end{question}
As a dual to \questionref{natural1}, once one has been made aware
of the role of the signature in Tennenbaum's theorem, one may think
to study more deeply the impact that the signature has on the complexity
of nonstandard models of $\mathsf{PA}$. For example: It is a standard
theorem (or definition) that a $\mathrm{PA}$ degree is the same as
one which computes a nonstandard model of $\mathsf{PA}$. On the other
hand, we saw that there is a computable nonstandard model of the definitionally
equivalent theory $T_{0}$ constructed by Pakhomov -- and hence,
by a theorem of Knight \cite{knight_degrees_coded_in_jumps}, there
exists one in every degree. Let us define, for a theory $T$ that
is definitionally equivalent to $\mathsf{PA}$, its \emph{nonstandard
spectrum} as the set of degrees of nonstandard models of $T$.
\begin{question}
If $T$ is a c.e.\ theory that is definitionally equivalent to $\mathsf{PA}$,
we can see that its nonstandard spectrum is at most the set of all
Turing degrees, and at least the set of all $\mathrm{PA}$ degrees.
Does there exist such a theory whose nonstandard spectrum is strictly
in-between these two cases? More generally, what can the nonstandard
spectrum of a c.e.\ theory definitionally equivalent to $\mathsf{PA}$
look like?
\end{question}
The methods used in this paper are too coarse to address this question;
they would have to be cleverly modified to stop the nonstandard spectrum
from including $0$.

As a dual question, albeit unrelated to the contents of this paper:
We mentioned in the introduction that Lutz and Walsh \cite{lutz_walsh_tennenbaum}
produced a c.e.\ consistent theory such that no definitionally equivalent
theory admits a computable model. To be more precise, they actually
constructed an entire family of c.e.\ consistent theories $T_{\mathrm{LW}}(R)$,
parametrized by an infinite computable binary tree $R$ none of whose
paths are ``guessable'' (a term defined in \cite{lutz_walsh_tennenbaum}),
such that no model of any theory definitionally equivalent to $T_{\mathrm{LW}}(R)$
admits a computable model. We think it would be interesting to ask
how close we can get such models to computable.
\begin{question}
Define the Lutz--Walsh spectrum of an infinite computable binary
tree $R$ none of whose paths are guessable, say $\mathrm{LWSpec}(R)$,
as the set of Turing degrees that compute a model of a theory definitionally
equivalent to $T_{\mathrm{LW}}(R)$. It is clear that this $\mathrm{LWSpec}(R)$
always contains all $\mathrm{PA}$ degrees, and in \cite{lutz_walsh_tennenbaum}
it is proven that $\mathrm{LWSpec}(R)$ does not contain $0$ (if,
again, none of the paths of $R$ are guessable). Does $\mathrm{LWSpec}(R)$
ever contain any degrees that are not $\mathrm{PA}$? If so, how far
from $\mathrm{PA}$ can we get? Is there such a tree $R$ for which
$\mathrm{LWSpec}(R)$ consists of all non-zero degrees? Is there any
nonzero degree that can never be in $\mathrm{LWSpec}(R)$?
\end{question}
\medskip{}

In the statement of \thmref{nestedpakhomov}, the arity of the predicate
used to replace set inclusion increased by $1$ for every jump in
complexity. We wonder if this is a logical necessity or merely an
artifact of our construction.
\begin{question}
For a given value of $n$, does there exist a theory $T_{Q}^{n}$,
axiomatizing a single ternary predicate $Q(x,y,z)$, such that $T_{Q}^{n}$
is definitionally equivalent to $\mathsf{PA}$ plus `all $\Pi_{n}$
truths' which admits a computable nonstandard model? What if we require
the predicate to be binary, instead?
\end{question}
\medskip{}

Regarding our jump inversion theorem, \thmref{main_final}, the following
question remains open:
\begin{question}
Does there exist a proof of \thmref{JIbool} (jump inversion for Boolean
algebras) as a corollary of \thmref{main_final}?
\end{question}
While \thmref{main_final} guarantees the existence of a $0'$-computable
isomorphism between the original structure and its computable copy,
most results in the literature can only guarantee (out of necessity)
a $0''$ or $0'''$-computable isomorphism. This is explained by the
fact that, in our proofs, we must first create a $0'$-computable
c.e.-typed copy which is isomorphic to the original structure, though
this isomorphism may itself need to be complex. Hirschfeldt asked
whether one could obtain a more direct result, for example by weakening
the assumptions of the QETP.
\begin{question}[Hirschfeldt]
Does there exist a version of \thmref{main_final} that requires
weaker assumptions, possibly using an infinite injury method instead
of finite injury, from which theorems such as \thmref{JIlinear-1}
(jump inversion for linear orders) or \propref{eqvinf} (jump inversion
for equivalence relations with infinitely many infinite classes) may
be obtained more easily?
\end{question}
Many general results in computable structure theory necessitate the
use of a finite relational signature, oftentimes for reasons similar
the ones discussed at the start of Section \ref{subsec:cetypedstructures}.
It was for these reasons that we introduced the notion of $0'$-computable
c.e.-typed structure. We wonder if our definition would be applicable
to other scenarios.
\begin{question}
Are there results in the literature, applicable to finite relational
signatures, which could be further (and usefully) generalized using
the notion of $0'$-computable c.e.-typed structure?
\end{question}
\medskip{}

In Section \ref{subsec:qetp}, we found it useful to work with $\llorc$
formulas, which are related to but not exactly the same as $\Sigma_{1}^{c}$
formulas. Ash, Knight, Manasse and Slaman showed that the definability
of a relation by a $\Sigma_{1}^{c}$ formula is related to computability-theoretic
properties, namely the notion of being a r.i.c.e. (relatively intrinsically
c.e.) relation; see \cites[Theorem II.16]{montalban_cst} for details.
We wonder if $\llorc$ formulas admit a similar treatment.
\begin{question}
Is there a computability-theoretic characterization of those relations
in a structure that are definable by a $\llorc$ formula?
\end{question}
\printbibliography

\end{document}